\newlist{thmlist}{enumerate}{1}
\setlist[thmlist]{label=(\roman{thmlisti}),noitemsep}
\newtheorem{theorem}{Theorem}[section]
\newtheorem{proposition}[theorem]{Proposition}
\newtheorem{lemma}[theorem]{Lemma}
\newtheorem{problem}[theorem]{Problem}
\newtheorem*{claim*}{Claim}
\newtheorem{corollary}[theorem]{Corollary}
\newtheorem{Main Conjecture}[theorem]{Main Conjecture}
\theoremstyle{remark}
\newtheorem{defn}[theorem]{Definition}
\newenvironment{example}
  {\pushQED{\qed}\examplex}
  {\popQED\endexamplex}
\newenvironment{remark}
  {\pushQED{\qed}\remarkx}
  {\popQED\endremarkx}
\newtheorem{notation}[theorem]{Notation}
\theoremstyle{plain}
\newtheorem*{MainThm}{Main Theorem}
\newcommand{\mydef}[1]{{\bf #1}}
\newcommand{\notdefterm}[1]{\emph{#1}}
\newcommand{\init}{{\tt in}}
\newcommand{\mult}{{\tt mult}}
\newcommand{\bpd}{{\sf BPD}}
\newcommand{\pipes}{{\sf Pipes}}
\newcommand{\hgt}{{\rm{ht }}}
\newcommand{\ess}{{\tt Ess}}
\newcommand{\perm}{{\tt Perm}}
\newcommand{\asm}{{\tt ASM}}
\newcommand{\wt}{{\tt wt}}
\newcommand{\ZZ}{\mathbb{Z}}
\newcommand{\CC}{\mathbb{C}}
\newcommand{\NN}{\mathbb{N}}
\newcommand{\QQ}{\mathbb{Q}}
\newcommand{\hilb}{\mbox{Hilb}}
\newcommand{\rk}{{\tt rk}}
\newcommand{\mat}{{\sf Mat}}
\newcommand{\spec}{{\mbox{Spec}}}
\newcommand{\proj}{{\mbox{Proj}}}
\newcommand{\fl}{{\mathcal F}}
\newcommand{\GL}{{\sf GL}}
\begin{document}
\pagestyle{plain}

\title{Bumpless pipe dreams encode Gr\"obner geometry of Schubert polynomials}
\author{Patricia Klein}
\address{Department of Mathematics, Texas A\&M University, College Station, TX 77840, USA}
\email{pjklein@tamu.edu}

\author{Anna Weigandt}
\address{School of Mathematics, University of Minnesota, Minneapolis, MN 55455, USA}
\email{weigandt@umn.edu}

\date{\today}
\subjclass[2020]{Primary: 14N15.  Secondary: 05E14, 13P10.}

\begin{abstract}
In their study of infinite flag varieties, Lam, Lee, and Shimozono (2021) introduced bumpless pipe dreams in a new combinatorial formula for double Schubert polynomials.  These polynomials are the $T \times T$-equivariant cohomology classes of matrix Schubert varieties and of their flat degenerations.  We give diagonal term orders with respect to which bumpless pipe dreams index the irreducible components of diagonal Gr\"obner degenerations of matrix Schubert varieties, counted with scheme-theoretic multiplicity.  

This indexing was conjectured by Hamaker, Pechenik, and Weigandt (2022).  This result establishes that bumpless pipe dreams are dual to and as geometrically natural as classical pipe dreams, for which an analogous anti-diagonal theory was developed by Knutson and Miller (2005).
\end{abstract}

\maketitle

\tableofcontents

\section{Introduction}

The \mydef{complete flag variety} $\fl(\CC^n)=B_-\backslash\GL(\CC^n)$ is the quotient of the general linear group by the Borel subgroup $B_-$ of lower triangular matrices.  There is a natural action of the Borel subgroup of upper triangular matrices $B_+$ on $\fl(\CC^n)$ by matrix multiplication.  The orbits $\Omega_w$ of this action, called \mydef{Schubert cells}, are indexed by permutations $w$ in the symmetric group $S_n$.  The closures $\mathfrak X_w=\overline{\Omega_w}$ of these orbits are called \mydef{Schubert varieties}.  Schubert varieties emerged in the study of the enumerative geometry problems posed by Schubert \cite{Sch79} and his contemporaries in the late nineteenth century. They have also played an essential role in the development of modern commutative algebra, providing crucial examples when the study of Cohen--Macaulay varieties was in its nascence.

Each Schubert variety gives rise to a \mydef{Schubert class} $\sigma_w$ in the integral cohomology ring $H^*(\fl(\CC^n))$.  Indeed, these Schubert classes form a $\mathbb Z$-linear basis for $H^*(\fl(\CC^n))$.
Borel \cite{Bor53} showed that $H^*(\fl(\CC^n))$ is isomorphic to $\mathbb Z[x_1,\ldots,x_n]/I^{S_n}$, where $I^{S_n}$ is the ideal generated by the nonconstant \notdefterm{elementary symmetric polynomials}.   Geometric properties of $\fl(\CC^n)$ are readily expressed in terms of Schubert classes. For instance, the coefficients $c^w_{u,v}$ in the product $\sigma_u\cdot\sigma_v=\sum_{w\in S_n} c_{u,v}^w\sigma_w$  are nonnegative integers; $c_{u,v}^w$ counts points in the intersection of three Schubert varieties that depend on $u$, $v$, and $w$ generically translated by elements of $\GL(\CC^n)$. Something that for decades hindered the study of $\mathbb Z[x_1,\ldots,x_n]/I^{S_n}$ was that there was no known choice of desirable coset representatives. 

Motivated by earlier work of Bern\v{s}te\u{\i}n, Gel'fand, and Gel'fand \cite{BGG73} as well as Demazure \cite{Dem74}, Lascoux and Sch\"utzenberger \cite{LS82} proposed one such choice: the \notdefterm{Schubert polynomials} $\mathfrak S_w(\mathbf x)$.  Schubert polynomials have many desirable combinatorial properties. Importantly, if $u, v \in S_n$, then, for $N$ sufficiently large with respect to $n$, the coefficients in the product $\mathfrak S_u(\mathbf x)\mathfrak S_v(\mathbf x)=\sum_{w \in S_N} c_{u,v}^w \mathfrak S_w(\mathbf x)$ agree with those arising from the multiplication of the corresponding Schubert classes in $H^*(\fl(\mathbb C^N))$.  Moreover, Schubert polynomials expand positively in the monomial basis, allowing for numerous combinatorial interpretations for these coefficients.  Of particular importance are the \emph{pipe dream formula} of \cite{BB93,BJS93,FK94,FS94} and a recent formula due to Lam, Lee, and Shimozono \cite{LLS21} in terms of \emph{bumpless pipe dreams}, which they introduced in their study of back stable Schubert calculus.  

Bumpless pipe dreams had also appeared earlier in a different form in work related to the study of the \notdefterm{six-vertex model}.  In this context, they are called \notdefterm{osculating lattice paths} (see, e.g.,\ \cite{Beh08}). In an unpublished preprint,  Lascoux \cite{Las} used the six-vertex model to give a formula for Grothendieck polynomials, which can be used to recover the formula of \cite{LLS21} for Schubert polynomials (see \cite{Wei21}).  By interpreting bumpless pipe dreams as planar histories for permutations, Lam, Lee, and Shimozono gave a formula for \emph{double Schubert polynomials} that is analogous to (but distinct from) the traditional pipe dream formula. Double Schubert polynomials represent classes of Schubert varieties in the Borel-equivariant cohomology of $\fl(\CC^n)$.  Lam, Lee, and Shimozono's innovation has inspired a great deal of further exploration of the combinatorics of Schubert polynomials (see, e.g.,\ \cite{FGS18,BS20,Hua20,Wei21, HPW, Hua21}).

Despite the combinatorial desirability of Schubert polynomials, there was for many years skepticism over whether they were really the right choice.   It was profoundly unclear whether Schubert polynomials reflected any of the geometric content of Schubert varieties. Progress on this front came by way of understanding torus-equivariant classes of \notdefterm{matrix Schubert varieties}.  

 The torus $T$ of diagonal matrices acts on $\mat(\mathbb C^n)$ by matrix multiplication on the right, and so we can study the ring $H^*_T(\mat(\mathbb C^n))\cong \mathbb Z[x_1,\ldots,x_n]$ of $T$-equivariant cohomology.
There is a projection map $\pi:\GL(\mathbb C^n)\rightarrow \fl(\CC^n)=B_-\backslash \GL(\CC^n)$ and an inclusion map $\iota:\GL(\mathbb C^n)\rightarrow \mat(\mathbb C^n)$ taking elements of the general linear group into the space of $n\times n$ matrices.  The 
\mydef{matrix Schubert variety} of $w$, introduced by Fulton \cite{Ful92}, is $X_w=\overline{\iota(\pi^{-1}(\mathfrak X_w))}$, which is an orbit closure for the natural $B_-\times B_+$ action on $\mat(\mathbb C^n)$. Because $X_w$ is stable under the action of $T$, it gives rise to a class $[X_w]_T\in H^*_T(\mat(\mathbb C^n))$.  Furthermore, this class is a polynomial representative for the Schubert class $\sigma_w$ in $H^*(\fl(\CC^n))$.  Remarkably, $[X_w]_T=\mathfrak S_w(\mathbf x)$, i.e., the coset representative for $\sigma_w$ that was singled out by Lascoux and Sch\"utzenberger is the same one identified by the theory of $T$-equivariant cohomology (see \cite{Ful92}, \cite[Theorem~4.2]{FR03}, \cite[Theorem~A]{KM05}).  In this sense, Schubert polynomials are canonical representatives for Schubert classes.  Analogously, double Schubert polynomials represent classes of matrix Schubert varieties in $H^*_{T\times T}(\mat(\mathbb C^n))$, and so double Schubert polynomials are identified as natural representatives for Schubert classes in $H_{B_+}^*(\fl(\CC^n))$. 

Furthermore, Knutson and Miller \cite{KM05} were able to use Gr\"obner geometry to explain the appearance of the traditional pipe dream formula for Schubert polynomials.  Fixing an \emph{anti-diagonal} term order $\sigma$ on the coordinate ring of $\mat(\mathbb C^n)$, one can degenerate $X_w$ to $\init_\sigma(X_w)$,  the scheme defined by the $\sigma$-initial ideal of the defining ideal of $X_w$, which Knutson and Miller showed to be a union of coordinate subspaces indexed by pipe dreams.  Through this work, the pipe dream formula gained geometric significance.

In this way, the pipe dream formula is a canonical choice of expression for Schubert polynomials, but only insofar as anti-diagonal term orders would be considered canonical term orders. Several years after \cite{KM05}, Knutson, Miller, and Yong \cite{KMY09} studied an arbitrary diagonal term order $\sigma$, but their results were restricted to the special case of \emph{vexillary} matrix Schubert varieties. They showed that, in this case, the irreducible components of $\init_\sigma(X_w)$ are indexed by \notdefterm{flagged tableaux} (or, equivalently, \notdefterm{diagonal pipe dreams}).  One  challenge of the diagonal degenerations of $X_w$ is that they are not always reduced.  For this reason, the complete story of the diagonal degenerations must include a count on the irreducible components with \emph{multiplicity} (see \cite[Chapter 12]{Eis95}). Outside of the vexillary setting, there was no combinatorial candidate  to index components of $\init_\sigma(X_w)$.
 
Recently, Hamaker, Pechenik, and Weigandt \cite{HPW} extended \cite{KMY09} to a wider class of matrix Schubert varieties.  They showed that in this larger special case the irreducible components of $\init_\sigma(X_w)$ are indexed by the bumpless pipe dreams of \cite{LLS21}.  The main theorem of the present work was previously conjectured by Hamaker, Pechenik, and Weigandt in \cite[Conjecture 1.2]{HPW}.

\begin{MainThm} Let $X_w$ be a matrix Schubert variety.  There exist diagonal term orders with respect to which the irreducible components of the Gr\"obner degeneration of $X_w$, counted with scheme-theoretic multiplicity, naturally correspond to the bumpless pipe dreams for the permutation $w$.
\end{MainThm}

In fact, we prove a more general version of this statement that applies to a family of term orders for which Gr\"obner degeneration reflects an algorithmic description of bumpless pipe dreams (see Theorem \ref{thm:main}).  Our theorem holds over an arbitrary field $\kappa$.  When $\kappa = \CC$, one recovers $T$-equivariant classes from the \notdefterm{multidegrees} of \cite{Jos84,Ros89}. Indeed, when $S$ is a multigraded polynomial ring over $\CC$ and $I$ is a multihomogeneous ideal, then the multidegree of $S/I$ is the class of $\spec(S/I)$ in the $T$-equivariant Chow ring of $\spec(S)$ (see \cite[Proposition~1.19]{KMS06}).  

We describe several consequences of our main theorem and of the machinery we build to prove it.  We give a recurrence on unions of matrix Schubert varieties, which leads to a recurrence on the corresponding multigraded Hilbert series, which in turn allows us to recover transition formulas for (double) Schubert and (double) Grothendieck polynomials (Proposition \ref{prop:hilbertAndKPoly} and Corollary \ref{cor:transitiongrothendieck}).  The algebro-geometric recurrence on unions of matrix Schubert varieties and that of the transition equations is mirrored by a corresponding transition on bumpless pipe dreams (Lemma~\ref{lem:bpdBij} and \cite[Section~5]{Wei21}).  This situation is (projectively) dual to that of \cite{Knu19}, which involves co-transition, pipe dreams, and unions of matrix Schubert varieties. Additionally, we initiate the study of Cohen--Macaulayness of unions of matrix Schubert varieties, of which our motivating examples are \emph{alternating sign matrix varieties} (see Corollary \ref{cor:CM}, Corollary \ref{cor:nonRadicalCM}, and Corollary \ref{cor:seqCM}).  

In light of the rich history of Schubert varieties in commutative algebra (see, e.g., \cite{HE71,Hoc73, DL81, HL82, MS83,    MR85,Ram85,Ram87, MS89, GL96, GM00}) and also myriad generalizations of classical determinantal varieties (see, e.g., \cite{HT92,Con96, CH97,BC98, Con98, LRT06, Gor07, Boo12, EHH13, Ber15, CDG20, FK20, CDF+21}), one might expect a wealth of commutative algebraic results on matrix Schubert varieties and alternating sign matrix varieties.  While there are results on matrix Schubert varieties using  commutative algebraic techniques \cite{Hsi13}, a good deal of our understanding of them comes via their connection to Schubert varieties or via primarily combinatorial techniques. 
For example, results on Hilbert--Samuel multiplicities and Castelnuovo--Mumford regularities of matrix Schubert varieties have primarily been pursued via combinatorial methods (see, e.g., \cite{LY12, WY12, PSW21, RRR+21, RRW23}).  The classification of Gorenstein matrix Schubert varieties \cite{WY06} passes through their connection to Schubert varieties and is geometrically and combinatorially driven. 

No comparable results are known by any method for alternating sign matrix varieties.  Indeed, very little is known about alternating sign matrices from the standpoint of commutative algebra.  It is not known when alternating sign matrix varieties are Cohen--Macaulay nor even equidimensional.  We hope that the current work showcases both the intrigue of matrix Schubert varieties and alternating sign matrix varieties from the perspective of commutative algebra and also 
the fact that both classes are amenable to study by such techniques.

\section*{Acknowledgments}
The authors thank Zach Hamaker, Oliver Pechenik, and Jenna Rajchgot for their previous collaboration with the authors on \cite{HPW} and \cite{KR}, respectively, and for valuable  discussions during the writing of the present paper.  They also thank Anders Buch, Allen Knutson, David Speyer, Alex Woo, and Alexander Yong for helpful conversations.  They express their gratitude to Allen Knutson, Ezra Miller, Oliver Pechenik, and Alexander Yong for comments on an earlier version of this manuscript.  The first author thanks Alain Lascoux for email correspondence concerning \cite{LS96}. The authors additionally thank Kuei-Nuan Lin and Yi-Huang Shen as well as a team of two anonymous referees for pointing out errors in earlier versions of what is now Lemma \ref{lem:primaryMult}.  They thank Daoji Huang and Matt Larson for helpful conversations regarding that lemma and especially Matt for ultimately showing Lemma 0H4J in the Stacks Project to the authors and explaining that \ref{lem:primaryMult} followed from it.  Finally, the authors thank the anonymous referees for their comments, which greatly improved this manuscript.

  The first author's travel was partially supported by an AMS-Simons Travel Grant.  The second author was partially supported by Bill Fulton's Oscar Zariski Distinguished University Professor Chair funds.  

\section{Background and preliminaries}

Throughout this paper, we will take $\kappa$ to be an arbitrary field. 

\subsection{Permutations}

Let $\mathbb N=\{0,1,2,\ldots\}$ and $\mathbb Z_+=\{1,2,3,\ldots\}$.  Given $m,n\in \mathbb Z_+$, let $[n]=\{1,2,\ldots,n\}$ and $[m,n]=\{i\in \mathbb Z_+:m\leq i\leq n\}$.  The \mydef{symmetric group} $S_n$ is the group of permutations of $n$ letters.  We often represent permutations in one-line notation.  It will sometimes also be convenient to represent permutations as \mydef{permutation matrices}.  We identify the permutation $w\in S_n$ with the matrix that has $1$'s in positions $(i,w(i))$ for all $i\in [n]$ and $0$'s in all other positions.  The transposition $t_{i,j}$ is the $2$-cycle $(ij)$, and we write $s_i$ for the simple reflection $(i\, i+1)$.  We use $\ell(w)=\#\{(i,j):i<j \text{ and } w(i)>w(j)\}$ to denote the \mydef{length} of $w \in S_n$.

The \mydef{(strong) Bruhat order} on $S_n$ is the transitive closure of covering relations of the form $w<wt_{i,j}$ if $\ell(w)+1=\ell(wt_{i,j})$.    There is another characterization of Bruhat order we will use: define the \mydef{rank function} of $w$ to be \[\rk_w(a,b)=\#\{(i,j)\in [a]\times [b]: w(i)=j\}.\]  Then $w\leq v$ if and only if $\rk_w(i,j)\geq \rk_v(i,j)$ for all $i,j\in[n]$.

In the tradition of \cite{KM05}, we will often use cardinal directions when describing relative positions of elements of an $n \times n$ grid representing $[n] \times [n]$.  Specifically, we will say that $(a,b)$ is \mydef{southeast} of $(c,d)$ if both $a\geq c$ and $b\geq d$.  We will say that $(a,b)$ is \mydef{maximally southeast} within a subset of $[n] \times [n]$ if there does not exist $(a',b') \neq (a,b)$ in the specified subset so that $a \leq a'$ and $b \leq b'$.  Use of the other cardinal directions occurs in the same manner.

Given $w\in S_n$, the \mydef{Rothe diagram} of $w$ is \[D(w)=\{(i,j):i,j\in [n], w(i)>j, \text{ and } w^{-1}(j)>i\}.\]  The length of $w$ satisfies $\ell(w)=\#D(w)$. The \mydef{essential set} of $w$ is 
\[\ess(w)=\{(i,j)\in D(w):(i+1,j),(i,j+1)\not \in D(w)\},\] i.e., the maximally southeast corners of the connected components of $D(w)$. 

We often visualize the Rothe diagram of the permutation $w$ by putting dots at each $(i,w(i))$ in an $n \times n$ grid and drawing a ray down and right from each such dot.  We call the boxes making up the $n \times n$ grid $\mydef{cells}$.  The set of cells without a dot or line through them makes up $D(w)$.  Borrowing from the ladder determinantal literature, we will call a maximally southeast element of $D(w)$ a \mydef{lower outside corner} of $D(w)$. See Figure \ref{fig:Rothe} for the visualization of $D(w)$ for $w=4721653$. 
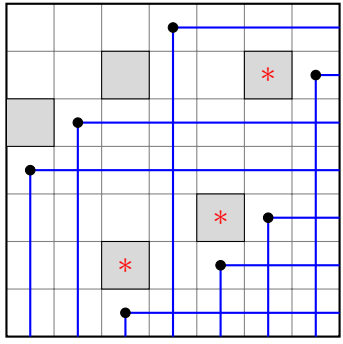
\begin{figure}[h]
\[
\begin{tikzpicture}[x=1.5em,y=1.5em]
\draw[step=1,gray, thin] (0,0) grid (7,7);
\draw[color=black, thick](0,0)rectangle(7,7);
\draw[thick, color=blue] (.5,0)--(.5,3.5)--(7,3.5);
\filldraw [black](.5,3.5)circle(.1);
\draw[thick, color=blue] (1.5,0)--(1.5,4.5)--(7,4.5);
\filldraw [black](1.5,4.5)circle(.1);
\draw[thick, color=blue] (2.5,0)--(2.5,.5)--(7,.5);
\filldraw [black](2.5,.5)circle(.1);
\draw[thick, color=blue] (3.5,0)--(3.5,6.5)--(7,6.5);
\filldraw [black](3.5,6.5)circle(.1);
\draw[thick, color=blue] (4.5,0)--(4.5,1.5)--(7,1.5);
\filldraw [black](4.5,1.5)circle(.1);
\draw[thick, color=blue] (5.5,0)--(5.5,2.5)--(7,2.5);
\filldraw [black](5.5,2.5)circle(.1);
\draw[thick, color=blue] (6.5,0)--(6.5,5.5)--(7,5.5);
\filldraw [black](6.5,5.5)circle(.1);
\filldraw[color=black, fill=gray!30](2,1)rectangle(3,2);
\filldraw[color=black, fill=gray!30](4,2)rectangle(5,3); 
\filldraw[color=black, fill=gray!30](5,5)rectangle(6,6);
\filldraw[color=black, fill=gray!30](2,5)rectangle(3,6);
\filldraw[color=black, fill=gray!30](0,4)rectangle(1,5);
\node at (2.5,1.5) {{\color{red}$\ast$}};
\node at (4.5,2.5) {{\color{red}$\ast$}};
\node at (5.5,5.5) {{\color{red}$\ast$}};
\end{tikzpicture}
\] \caption{Let $w=4721653$.  The cells of $\ess(w)$ have been shaded in gray, and the lower outside corners are ornamented with an {\color{red}$\ast$}. In particular, $\ess(w) = \{(2,3),(2,6),(3,1), (5,5),(6,3) \}$ and $ D(w) = \ess(w) \cup \{(1,1), (1,2), (1,3), (2,1), (2,2), (2,5), (5,3)\}$.  }
\label{fig:Rothe}
\end{figure}

A permutation $\pi\in S_n$ is \mydef{bigrassmannian} if $\#\ess(\pi)=1$.  A bigrassmannian permutation is uniquely determined by the position of its essential cell and the value of its rank function at this position. Explicitly, fix $a,b\in[n]$, and take $r$ so that $0\leq r<\min\{a,b\}$ and $a+b-r\leq n$.  Then we construct a bigrassmannian $\pi\in S_n$, which, in block matrix form, is given by \[\pi=\begin{pmatrix}
\mathbb{1}_r&\mathbf 0&\mathbf 0&\mathbf 0\\
\mathbf 0 &\mathbf 0& \mathbb{1}_{a-r}& \mathbf 0\\
\mathbf 0&\mathbb{1}_{b-r} &\mathbf 0 &\mathbf 0\\
\mathbf 0&\mathbf 0&\mathbf 0&\mathbb{1}_{n-a-b+r}
\end{pmatrix},\] where $\mathbb{1}_k$ denotes the identity matrix of size $k$.  By construction, $\ess(\pi)=\{(a,b)\}$ and $\rk_\pi(a,b)=r$.

\subsection{Alternating sign matrices}

An \mydef{alternating sign matrix} (ASM) is a square matrix with entries in $\{-1,0,1\}$ so that the entries in each row (and column) sum to $1$ and the nonzero entries in each row (and column) alternate in sign.  An ASM with no negative entries is a permutation matrix.  Write $\asm(n)$ for the set of $n\times n$ ASMs.

 The \mydef{corner sum function} of $A = (A_{i,j}) \in \asm(n)$ is defined by $ \rk_A(a,b)=\sum_{i=1}^a\sum_{j=1}^b A_{i,j}$ for $(a,b) \in [n] \times [n]$.   It will also be useful to define $\rk_A(i,j)=0$ whenever $i=0$ or $j=0$.  If $A\in S_{n}$, then $\rk_A$ agrees with the definition of the rank function of a permutation.  We may also use $\rk_A$ to denote the \mydef{corner sum matrix} of $A$, the $n \times n$ matrix whose $(i,j)^{th}$ entry is $\rk_A(i,j)$: that is,  $\rk_A = ((\rk_{A})_{i,j})  = (\rk_A(i,j))$.  When $A \in S_n$, the corner sum matrix is commonly called the \mydef{rank matrix} of $A$.

Corner sum functions induce a lattice structure on $\asm(n)$ defined by $A\geq B$  if and only if  $\rk_A(i,j)\leq \rk_B(i,j)$ for all $i,j\in [n]$.  Restricting to permutations recovers the (strong) Bruhat order on $S_n$; indeed, the ASM poset is the smallest lattice with this property \cite[Lemme~5.4]{LS96}. One computes the \mydef{join} (least upper bound) $A\vee B$ by taking entry-wise minima  of $\rk_A$ and $\rk_B$ and the \mydef{meet} (greatest lower bound) $A\wedge B$ by taking entry-wise maxima of $\rk_A$ and $\rk_B$.  The bigrassmannian permutations are the join-irreducibles of the lattice of ASMs. To compare a bigrassmannian with an ASM, it is enough to compare a single value of their corresponding corner sum functions.
\begin{lemma}[{\cite[Theorem~30]{BS17}}]
	\label{lemma:bigrasscompare}
	Fix a bigrassmannian $\pi \in S_n$ with $\ess(\pi)=\{(a,b)\}$.  Given $A\in \asm(n)$,  $\rk_A(a,b)\leq \rk_\pi (a,b)$ if and only if $\pi \leq A$.
\end{lemma}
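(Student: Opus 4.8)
The plan is to unwind the definition of the ASM order and reduce the biconditional to a comparison of corner sum functions. By definition, $\pi \le A$ means exactly that $\rk_\pi(i,j) \ge \rk_A(i,j)$ for all $i,j \in [n]$, so the implication $\pi \le A \Rightarrow \rk_A(a,b) \le \rk_\pi(a,b) = r$ is immediate (recall $\rk_\pi(a,b) = r$ by construction of $\pi$). The content lies in the converse: assuming only that $\rk_A(a,b) \le r$, I must recover $\rk_A(i,j) \le \rk_\pi(i,j)$ at \emph{every} box $(i,j)$.

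The first step is to turn the single inequality $\rk_A(a,b) \le r$ into a global upper bound for $\rk_A$. The elementary input is that the corner sum function of any ASM is nondecreasing in each argument and changes by $0$ or $1$ under a unit step in either argument: $\rk_A(i,j) - \rk_A(i-1,j)$ is a partial row sum of $A$ and $\rk_A(i,j) - \rk_A(i,j-1)$ a partial column sum, and each lies in $\{0,1\}$ by the alternating-sign and row/column sum conditions; in particular $\rk_A(i,j) \le \min(i,j)$. Walking out from $(\min(i,a),\min(j,b))$ to $(i,j)$ and picking up at most one unit per step, one gets from $\rk_A(a,b) \le r$ that
\[
\rk_A(i,j) \ \le\ E(i,j) \ :=\ \min\bigl\{\, i,\ j,\ r + (i-a)^+ + (j-b)^+ \,\bigr\}
\qquad\text{for all } i,j ,
\]
where $x^+ := \max(x,0)$.

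It then remains to check $\rk_\pi(i,j) \ge E(i,j)$ for all $i,j$ (equality in fact holds, which is a consistency check, since $\pi$ itself is an ASM with $\rk_\pi(a,b) = r$, so the displayed bound applies to $A = \pi$ and gives $\rk_\pi \le E$). I would prove $\rk_\pi \ge E$ by reading off from the block form of $\pi$ which columns the first $i$ rows occupy: when $i \le r$ or $j \le r$, $\pi$ acts as the identity on the relevant initial segment and $\rk_\pi(i,j) = \min(i,j) \ge E(i,j)$; otherwise a short case analysis on the location of $i$ (and, symmetrically, $j$) relative to the block boundaries $r$, $a$, $a+b-r$ computes $\rk_\pi(i,j)$ and compares it to $E(i,j)$, the $a \leftrightarrow b$, $i \leftrightarrow j$ symmetry of the construction (note $\pi^{-1}$ is again bigrassmannian, with essential box $(b,a)$) cutting the number of cases roughly in half. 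Combining the two halves gives $\rk_A(i,j) \le E(i,j) \le \rk_\pi(i,j)$ for all $i,j$, i.e.\ $\pi \le A$.

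I expect the only real obstacle to be bookkeeping: the case analysis identifying $\rk_\pi$ with the envelope $E$ is routine but fiddly, and I would organize it so that the symmetry and the initial and terminal segments ($i \le r$, and $i \ge a+b-r$, where $\pi$ acts as the identity) are dispatched first. The genuinely load-bearing idea is the one in the second step: because corner sum functions of ASMs are nondecreasing discrete $1$-Lipschitz functions dominated by $\min(i,j)$, their value at the single box $(a,b)$ already forces them below the envelope $E$, and the block form of the bigrassmannian $\pi$ is engineered precisely so that $\rk_\pi$ attains $E$.
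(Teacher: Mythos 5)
Your argument is correct, and it is worth noting at the outset that the paper itself does not prove this lemma at all: it is quoted from Brualdi--Schroeder \cite[Theorem~30]{BS17}, so there is no internal proof to compare against. What you supply is a self-contained elementary replacement. The two load-bearing facts both check out: (1) for any $A \in \asm(n)$ the increments $\rk_A(i,j)-\rk_A(i-1,j)$ and $\rk_A(i,j)-\rk_A(i,j-1)$ are partial row/column sums of an alternating $\pm 1$ sequence beginning with $+1$, hence lie in $\{0,1\}$, which gives both monotonicity and the bounds $\rk_A(i,j)\le\min(i,j)$ and $\rk_A(i,j)\le \rk_A(\min(i,a),\min(j,b)) + (i-a)^+ + (j-b)^+ \le r + (i-a)^+ + (j-b)^+$, i.e.\ $\rk_A \le E$; and (2) the block form of $\pi$ given in the paper does satisfy $\rk_\pi = E$ (I spot-checked this on several bigrassmannians, and your observation that the inequality $\rk_\pi \le E$ is free, since $\pi$ is itself an ASM with $\rk_\pi(a,b)=r$, correctly reduces the verification to $\rk_\pi \ge E$). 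The one place where you defer work is the case analysis establishing $\rk_\pi \ge E$; that computation is genuinely routine from the explicit formula $\pi(i)=i$ for $i\le r$ or $i>a+b-r$, $\pi(i)=i+b-r$ for $r<i\le a$, and $\pi(i)=i-a+r$ for $a<i\le a+b-r$, and your use of the symmetry $\rk_{\pi^{-1}}(j,i)=\rk_\pi(i,j)$ with $\ess(\pi^{-1})=\{(b,a)\}$ is legitimate. In a final write-up you should carry out (or at least display the closed form for) $\rk_\pi(i,j)$ in each of the block regions, but there is no gap in the idea.
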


Let $\perm(A)=\{w\in S_n:w\geq A, \text{ and, if } w\geq v\geq A \mbox{ for some $v \in S_n$, then }  w=v\}$.  We define $\hgt(A) =\min\{\ell(w):w\in \perm(A)\}$.  If $\ell(w)=\hgt(A) $ for all $w\in \perm(A)$, then we say that $A$ is \mydef{equidimensional}.

\begin{example}
\label{example:smallASM}
Let $A=\begin{pmatrix}0&1&0\\1&-1&1\\0&1&0\end{pmatrix}$.  Then $\rk_A=\begin{pmatrix}0&1&1\\1&1&2\\1&2&3\end{pmatrix}$.  The reader may verify that $\rk_{231}$, $\rk_{312}$, and $\rk_{321}$ are all entry-wise smaller than $\rk_A$; thus, $231,312,321>A$.  Furthermore, these are the only permutations in $S_3$ that are larger than $A$.  Since $321>231,312$, it follows that $\perm(A)=\{231,312\}$.  Because $\ell(231) = 2 = \ell(312)$, $A$ is equidimensional and $\hgt(A) =2$.
\end{example}

\subsection{Alternating sign matrix varieties}

Given a matrix $M$, let $M_{[i],[j]}$ be the submatrix of $M$ consisting of the first $i$ rows and $j$ columns.   Given $A\in \asm(n)$, we define the \mydef{ASM variety} of $A$ to be \[X_A=\{M\in \mat(n): \rk(M_{[i],[j]})\leq \rk_A(i,j) \text{ for all } i,j\in [n]\}.\]  When $w\in S_n$, we say $X_w$ is a \mydef{matrix Schubert variety}.  For background on matrix Schubert varieties, see \cite{Ful92, MS04}.

Fix an $n \times n$ generic matrix $Z=(z_{i,j})$, and let $R=\kappa[z_{1,1}, \ldots, z_{n,n}]$. We write $I_k(Z_{[i],[j]})$ for the ideal of $R$ generated by the $k$-minors in $Z_{[i],[j]}$.  As a convention, if $i=0$ or $j=0$, then define $I_k(Z_{[i],[j]})=(0)$. 
The \mydef{ASM ideal} of $A$ is \[
I_A=\sum_{i,j=1}^n I_{\rk_A(i,j)+1}(Z_{[i],[j]}).
\]

We call the union of the $(\rk_A(i,j)+1)$-minors in $Z_{[i],[j]}$, as $i$ and $j$ range from $1$ to $n$, the \mydef{natural generators} of $I_A$. 
 
If $w\in S_n$, $I_w$ is also called a \mydef{Schubert determinantal ideal}.  

\begin{proposition}[{\cite[Proposition~3.3]{Ful92}}]
\label{prop:matSchubFacts}
	Fix $w\in S_n$.
	\begin{enumerate}
		\item $I_w$ is prime.
		\item $\hgt (I_w)=\ell(w)$.
		\item $X_w$ is Cohen--Macaulay.
	\end{enumerate}
\end{proposition}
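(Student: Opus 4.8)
The plan is to separate the three assertions into a \textbf{soft part} --- irreducibility of $X_w$ and the dimension count --- and a \textbf{hard part} --- radicality of $I_w$ and Cohen--Macaulayness --- and to derive the hard part from a Gr\"obner degeneration. For the soft part: the rank conditions cutting out $X_w$ are exactly the vanishing of the relevant minors, so $X_w = V(I_w)$ as sets, and, as recalled in the introduction, $X_w$ is the closure of a single orbit of the irreducible group $B_- \times B_+$ acting on $\mat(n)$; hence $X_w$ is irreducible and $\sqrt{I_w}$ is prime. A dimension count on the dense orbit --- for instance, computing the dimension of the stabilizer of the permutation matrix $w$ inside $B_- \times B_+$ --- gives $\dim X_w = n^2 - \ell(w)$, so $\hgt \sqrt{I_w} = \ell(w)$. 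Thus (2) follows once $I_w$ is known to be radical, which is the remaining point.

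For radicality and Cohen--Macaulayness I would fix an anti-diagonal term order $\prec$ on $R$ and run the argument of Knutson and Miller. Using Fulton's essential-set reduction, $I_w$ is generated by the $(\rk_w(a,b)+1)$-minors of $Z_{[a],[b]}$ as $(a,b)$ ranges over $\ess(w)$; one shows these minors form a Gr\"obner basis and that $\init_\prec(I_w)$ is the Stanley--Reisner ideal of the pipe dream (subword) complex of $w$. This complex is pure of dimension $n^2 - \ell(w)$ and is shellable (indeed vertex-decomposable), hence Cohen--Macaulay, so $R/\init_\prec(I_w)$ is Cohen--Macaulay; since Cohen--Macaulayness descends from the special fiber to the general fiber of a flat family, $R/I_w$ is Cohen--Macaulay, which is (3). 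Because $\init_\prec(I_w)$ is squarefree it is radical, and radicality of an initial ideal forces radicality of the ideal, so $I_w$ is radical; together with $\sqrt{I_w}$ prime this gives (1), and the same computation re-derives $\hgt I_w = \ell(w)$.

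The crux is the Gr\"obner statement: that the essential minors already form a Gr\"obner basis under a suitably chosen anti-diagonal order, with squarefree initial ideal equal to the Stanley--Reisner ideal of a shellable complex. This needs both a careful choice of term order and a combinatorial argument that no unexpected monomials enter the initial ideal, plus the separate input that subword complexes are shellable; everything else --- the orbit description, the dimension count, the descent of Cohen--Macaulayness, and the passage from a radical initial ideal to a radical ideal --- is formal. An alternative that avoids this machinery is Fulton's original induction: when $w$ has a fixed point one descends to a smaller symmetric group, and otherwise one realizes $X_w$ as an affine-space bundle over the matrix Schubert variety of a shorter permutation built from a well-chosen simple reflection, with dominant (equivalently $132$-avoiding) permutations as the base case, where $I_w$ is a ladder determinantal ideal already known to be prime and Cohen--Macaulay; there the obstacle is instead setting up the bundle structure and handling the base case.
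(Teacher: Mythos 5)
The paper does not prove this proposition at all: it is imported verbatim from Fulton \cite[Proposition~3.3]{Ful92}, so there is no in-paper argument to compare against. Your outline is a correct reconstruction of the two standard proofs, and the division into a ``soft'' orbit-theoretic part and a ``hard'' Gr\"obner part is exactly how the Knutson--Miller route works; indeed the Gr\"obner statement you isolate as the crux is precisely what this paper later imports as Theorem~\ref{theorem:KMTheoremB}, and \cite{KM05} note explicitly that their subword-complex argument reproves Fulton's primality and Cohen--Macaulayness, so there is no circularity in using it here. Your stabilizer computation is also right: the stabilizer of the point $w$ in $B_-\times B_+$ is isomorphic to $B_-\cap wB_+w^{-1}$, of dimension $n+\ell(w)$, giving $\dim(B_-wB_+)=n^2-\ell(w)$. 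The one step you pass over too quickly is the set-theoretic identification $V(I_w)=\overline{B_-wB_+}$: the containment $\overline{B_-wB_+}\subseteq V(I_w)$ is immediate from invariance of the rank conditions, but the reverse containment --- that every (possibly singular) matrix satisfying $\rk(M_{[i],[j]})\leq \rk_w(i,j)$ for all $i,j$ lies in the closure of the single orbit of $w$ --- requires an argument, since the $B_-\times B_+$-orbits on all of $\mat(n)$ are indexed by partial permutation matrices and one must check that each orbit meeting $V(I_w)$ degenerates to the orbit of $w$. Without this, irreducibility of $V(I_w)$ (hence primeness of $\sqrt{I_w}$) is not yet established. This is exactly the content of Fulton's Lemma~3.9/Proposition~3.3 and is standard (a Gaussian-elimination/Bruhat-decomposition argument), but it should be named as an input rather than folded into the definition. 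With that caveat, both your main route and the alternative induction you describe (fixed-point reduction plus the ladder-determinantal base case for dominant permutations) are faithful to the literature.
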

By \cite[Lemma~3.10]{Ful92}, a Schubert determinantal ideal can be generated by a (usually proper) subset of its natural generators: \[I_w=\sum_{(i,j)\in\ess(w)}I_{\rk_w(i,j)+1}(Z_{[i],[j]}).\]  We call these generators the \mydef{Fulton generators}.  There is a generalization of the Fulton generators for ASM ideals (see \cite[Lemma~5.9]{Wei}).

It is clear that $X_A=V(I_A)$, but it is not obvious that $I_A$ is radical.  We give a proof below of this and other fundamental facts about $I_A$. Before proceeding, we will need to recall the Stanley--Reisner correspondence.

Given a simplicial complex $\Delta$ on vertex set $[n]$, we define the \mydef{Stanley--Reisner ideal} $I_{\Delta}\subseteq \kappa[z_1,\dots, z_n]$ of $\Delta$ to be  
\[ I_\Delta = \left( \prod_{i \in U} z_i : U \subseteq [n], U \notin \Delta \right).\]  This map $\Delta\mapsto I_\Delta$ is a bijection from simplicial complexes on $[n]$ to squarefree monomial ideals of $\kappa[z_1,\dots, z_n]$. Let $\Delta(I)$ denote the simplicial complex associated to a squarefree monomial ideal $I$. For a subset $F \subseteq [n]$, observe that the prime ideal $P = (z_i : i \notin F)$ is a minimal prime of $I$ if and only if $F$ is a facet of $\Delta(I)$.  For further background, we refer the reader to \cite[Chapter 1]{MS04}.

Monomial ideals will typically occur for us as the initial ideals of Schubert determinantal ideals or ASM ideals.  For general background on term orders, initial ideals, and Gr\"obner bases, we refer the reader to \cite[Chapter 15]{Eis95}.   We will be especially interested in \emph{lexicographic} term orders.  For every ordering $\prec$ of the variables, there is a unique term order that is called the lexicographic term order on that ordering of the variables.  If, for example, $z_1 \prec z_2 \prec \cdots \prec z_n$ and $\mu$ and $\nu$ are two monomials in $\kappa[z_1, \ldots, z_n]$, then, to define the lexicographic term order $<$ on the ordering $\prec$ of the variables, we say that $\mu <\nu$ if there exists $i\in [n]$ such that $\max \{ k: z_j^k \mid \mu\} = \max\{ k: z_j^k \mid \nu\}$ for all $z_j \succ z_i$ and $\max\{k: z_i^k \mid \mu\} < \max\{ k: z_i^k \mid \nu\}$.

Because there are many orderings of the variables, there are many lexicographic term orders.  ``A lexicographic term order" should not be confused with ``the lexicographic ordering of the variables," which is sometimes used to mean $z_n \prec z_{n-1} \prec \cdots \prec z_1$.  Throughout this article, we will only use ``lexicographic" to refer to a full term order.  We will often describe orderings of the variables as ``reading orders" (for various languages).

For a term order $\sigma$ on a polynomial ring $R$ and an ideal $I$ of $R$, we will use $\init_\sigma(I)$ to denote the initial ideal of $I$ with respect to $\sigma$.  When $R$ is a polynomial ring in the entries of a generic matrix, we say that a term order is \mydef{diagonal} (respectively, \mydef{anti-diagonal}) if the leading term of each matrix minor is the product of the entries along the main diagonal (respectively, along the anti-diagonal).  

Each permutation $w\in S_n$ has an associated set of \notdefterm{(reduced) pipe dreams} $\pipes(w)$. We refer the reader to \cite{BB93} for background. Given $\mathcal D\in \pipes(w)$, we can record the locations of its crossing tiles $C(\mathcal D)\subseteq [n]\times [n]$. For any $E \in [n] \times [n]$, define the ideal $I_E = (z_{i,j}:(i,j)\in E)$.

\begin{theorem}[{\cite[Theorem~B]{KM05}}]
\label{theorem:KMTheoremB}
If $\sigma$ is any anti-diagonal term order on $\kappa[z_{1,1}, \ldots, z_{n,n}]$, then the Fulton generators of $I_w$ 
form a Gr\"obner basis.  In particular,  $\init_\sigma(I_w)$ is squarefree.  The facets of $\Delta(\init_\sigma(I_w))$ are \[\{[n]\times[n] - C(\mathcal D):\mathcal D\in \pipes(w)\},\]
and \[\init_\sigma(I_w)=\bigcap_{\mathcal D\in \pipes(w)} I_{C(\mathcal D)}.\]
\end{theorem}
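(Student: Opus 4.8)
The plan is to produce a squarefree monomial ideal that is manifestly contained in $\init_\sigma(I_w)$ and then force equality by comparing Krull dimensions and multidegrees. Let $J_w \subseteq R$ be the ideal generated by the anti-diagonal terms (with respect to $\sigma$) of the Fulton generators of $I_w$. Since the $\sigma$-leading term of a minor of a generic matrix is the squarefree product of the entries along its anti-diagonal, $J_w$ is a squarefree monomial ideal, and by construction $J_w \subseteq \init_\sigma(I_w)$. The whole theorem then reduces to the single equality $J_w = \init_\sigma(I_w)$: this says precisely that the Fulton generators form a Gr\"obner basis and that $\init_\sigma(I_w)$ is squarefree, and the two displayed formulas follow from it through the Stanley--Reisner dictionary once the facets of $\Delta(J_w)$ have been identified.

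To obtain the equality I would argue as follows. Gr\"obner degeneration is flat, so $\dim R/\init_\sigma(I_w) = \dim R/I_w = n^2 - \ell(w)$ and the multidegree of $R/\init_\sigma(I_w)$ agrees with that of $R/I_w$, which is $\mathfrak{S}_w$ (via the identification $[X_w]_T = \mathfrak{S}_w$ recalled in the introduction). Separately I would establish $(a)$ that $\Delta(J_w)$ is Cohen--Macaulay and pure of dimension $n^2 - \ell(w) - 1$, with facet set $\{[n] \times [n] \setminus C(\mathcal{D}) : \mathcal{D} \in \pipes(w)\}$, and $(b)$ that the multidegree of $R/J_w$ is also $\mathfrak{S}_w$. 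Given $(a)$, part $(b)$ is nearly immediate: $J_w = \bigcap_{\mathcal{D} \in \pipes(w)} I_{C(\mathcal{D})}$, so $\spec(R/J_w)$ is a union of coordinate subspaces all of dimension $n^2 - \ell(w)$, whence its multidegree is the sum over $\mathcal{D} \in \pipes(w)$ of the products $\prod_{(i,j) \in C(\mathcal{D})} \deg(z_{i,j})$, and for the grading under which $[X_w]_T = \mathfrak{S}_w$ this sum is exactly the pipe dream formula for $\mathfrak{S}_w$. Finally, the inclusion $J_w \subseteq \init_\sigma(I_w)$, the matching Krull dimensions, the Cohen--Macaulayness (hence unmixedness) of $R/J_w$, and the matching multidegrees together force $J_w = \init_\sigma(I_w)$: the kernel $\init_\sigma(I_w)/J_w$ is a submodule of the Cohen--Macaulay module $R/J_w$, so were it nonzero it would be supported in top dimension and would make the multidegree of $R/J_w$ strictly exceed that of $R/\init_\sigma(I_w)$, a contradiction.

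The real work, and the step I expect to be the main obstacle, is $(a)$: analyzing the monomial ideal $J_w$ and matching its Stanley--Reisner combinatorics with reduced pipe dreams while simultaneously establishing Cohen--Macaulayness. The route I would take is that of Knutson and Miller: read the cells of the $n \times n$ grid in a fixed order to obtain a word $Q$ in simple reflections of a symmetric group, and verify that $\Delta(J_w)$ is the \emph{subword complex} $\Delta(Q, w)$ whose faces are those subwords of $Q$ whose complement still contains a reduced word for $w$. Subword complexes are shellable, hence Cohen--Macaulay, and are pure of the predicted dimension, which supplies the topological content of $(a)$; and the standard bijection between facets of $\Delta(Q, w)$ and reduced words for $w$ occurring within $Q$ translates, under the grid-reading convention, into the bijection between complements of facets of $\Delta(J_w)$ and crossing-tile sets of reduced pipe dreams for $w$. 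Checking that $\Delta(J_w)$ genuinely is this subword complex---that the minimal nonfaces forced by the anti-diagonals of the Fulton minors coincide with the nonfaces imposed by the subword condition---is the combinatorial heart of the argument; once that is in hand, everything above is formal.
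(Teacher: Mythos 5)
The paper does not prove this statement; it is quoted verbatim from Knutson--Miller \cite[Theorem~B]{KM05}, so there is no internal proof to compare against. Judged against the actual proof in the literature, your outline is essentially the Knutson--Miller route --- containment $J_w \subseteq \init_\sigma(I_w)$, identification of $\Delta(J_w)$ with a subword complex, shellability giving purity and Cohen--Macaulayness, and a multidegree comparison forcing equality --- and the logic of your final step (a nonzero kernel $\init_\sigma(I_w)/J_w$ would be supported in top dimension by unmixedness of $R/J_w$ and would contribute positively to the multidegree) is sound, granted positivity of multidegrees. The one substantive difference is how you supply the two multidegree computations: you import the identity $[X_w]_T = \mathfrak{S}_w(\mathbf x)$ and the pipe dream formula of \cite{BB93,BJS93,FK94,FS94} as external inputs, whereas \cite{KM05} deliberately avoids assuming the pipe dream formula (they reprove it) and instead runs a self-contained ``Bruhat induction'' establishing that the multidegrees of the $R/J_w$ satisfy the divided-difference recursion. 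Your shortcut is legitimate and is the standard streamlined presentation when those prior results are taken as known, as they are in this paper. You are also right to flag the identification of $\Delta(J_w)$ with the subword complex $\Delta(Q,w)$ --- i.e., that the minimal nonfaces cut out by the anti-diagonals of the Fulton minors are exactly the subword-complex nonfaces --- as the genuine combinatorial content; that step is where the work of \cite{KM05} lives and your proposal only gestures at it, so as written the argument is a correct skeleton rather than a complete proof.
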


\begin{remark}
Since the lattice of ASMs is finite (and hence complete), for any $w_1,\ldots,w_r\in S_n$,  $\vee\{w_1,\ldots,w_r\}\in \asm(n)$ is well defined.  In particular, though Lemma \ref{lemma:asmIdealFacts}, below, is phrased in terms of ASMs, it can just as well be thought of as a lemma about arbitrary intersections of matrix Schubert varieties.
\end{remark}

\begin{lemma}\label{lemma:asmIdealFacts}
Let $A\in \asm(n)$, and fix an anti-diagonal term order $\sigma$ on $ R = \kappa[z_{1,1},\ldots,z_{n,n}]$.   Then the following hold:
\begin{thmlist}
\item  If $w_1,\ldots,w_r\in S_n$ such that $A=\vee\{w_1,\ldots,w_r\}$, then
\[ \sum_{i=1}^r \init_\sigma(I_{w_i})=\init_\sigma(I_A)=\bigcap_{u\in \perm(A)} \init_\sigma(I_u).\]
\label{lempart:intersectInit}

    \item If $w_1,\ldots,w_r\in S_n$ such that $A=\vee\{w_1,\ldots,w_r\}$, then $ I_A={\displaystyle \sum_{i=1}^r} I_{w_i}.$ \label{lempart:addIwi}
    
     \item $I_A$ is radical. \label{lempart:asmRad}
    
   \item $I_A$ has the irredundant prime decomposition $\displaystyle I_A=\bigcap_{w\in \perm(A)}I_w$. \label{lempart:asmDecomp}
   
\item $\hgt (I_A)=\hgt(A)$. \label{lempart:asmHeight}

		\item $A$ is equidimensional if and only if $\spec(R/I_A)$ is equidimensional. \label{lempart:asmEquidim}
\end{thmlist}
\end{lemma}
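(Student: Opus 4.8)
My plan is to make part \ref{lempart:addIwi} the only hands-on ingredient, to funnel everything else through the single technical statement that $\init_\sigma(I_A)$ is squarefree and equals $\bigcap_{u\in\perm(A)}\init_\sigma(I_u)$, and to obtain the remaining parts by Hilbert-function bookkeeping. I would begin with part \ref{lempart:addIwi}. Iterated cofactor expansion gives $I_t(Z_{[i],[j]})\subseteq I_s(Z_{[i],[j]})$ whenever $s\le t$, and because joins of ASMs are computed by entrywise minima of corner-sum functions, $\rk_A(i,j)=\min_k\rk_{w_k}(i,j)$. Hence $I_{w_k}=\sum_{i,j}I_{\rk_{w_k}(i,j)+1}(Z_{[i],[j]})\subseteq\sum_{i,j}I_{\rk_A(i,j)+1}(Z_{[i],[j]})=I_A$ for every $k$, while choosing, for each $(i,j)$, an index $k$ attaining the minimum puts the corresponding natural generators of $I_A$ into $I_{w_k}$; so $I_A=\sum_k I_{w_k}$. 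The same minor containment applied to $A\le u$ gives $I_A\subseteq I_u$ for every $u\in\perm(A)$, and since the bigrassmannians are the join-irreducibles of $\asm(n)$, part \ref{lempart:addIwi} in particular writes $I_A=\sum_{\pi\le A}I_\pi$ with $\pi$ ranging over the (finitely many) bigrassmannian permutations below $A$.

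For part \ref{lempart:intersectInit}, I would first record the chain, valid for any family with $A=\vee\{w_k\}$, obtained from part \ref{lempart:addIwi} and $I_A\subseteq I_u$:
\[
\sum_k\init_\sigma(I_{w_k})\ \subseteq\ \init_\sigma(I_A)\ \subseteq\ \bigcap_{u\in\perm(A)}\init_\sigma(I_u).
\]
Next I would collapse it. If $\pi\le A$ is bigrassmannian with $\ess(\pi)=\{(a,b)\}$ and $\rk_\pi(a,b)=r$, then $\rk_A(a,b)=\min_k\rk_{w_k}(a,b)\le r$, so some $w_{k}$ has $\rk_{w_{k}}(a,b)\le r$, whence $\pi\le w_{k}$ by Lemma \ref{lemma:bigrasscompare}, hence $I_\pi\subseteq I_{w_{k}}$ and $\init_\sigma(I_\pi)\subseteq\init_\sigma(I_{w_{k}})$; thus $\sum_{\pi\le A}\init_\sigma(I_\pi)\subseteq\sum_k\init_\sigma(I_{w_k})$ for every such family. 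Consequently it suffices to prove the one inclusion
\[
(\star)\qquad \bigcap_{u\in\perm(A)}\init_\sigma(I_u)\ \subseteq\ \sum_{\pi\le A}\init_\sigma(I_\pi),
\]
for then the chain together with $\sum_{\pi\le A}\init_\sigma(I_\pi)\subseteq\sum_k\init_\sigma(I_{w_k})$ forces all four ideals to agree, and the common ideal $\bigcap_{u\in\perm(A)}\init_\sigma(I_u)$ is squarefree as an intersection of the squarefree monomial ideals of Theorem \ref{theorem:KMTheoremB}.

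To attack $(\star)$ I would use Theorem \ref{theorem:KMTheoremB}: applied to a bigrassmannian $\pi$ (whose Fulton ideal is $I_{r+1}(Z_{[a],[b]})$) it identifies $\init_\sigma(I_\pi)$ with the ideal generated by the anti-diagonal terms of the $(r+1)$-minors of $Z_{[a],[b]}$, and Lemma \ref{lemma:bigrasscompare} says $\pi\le A$ iff $\rk_A(a,b)\le r$; summing over such $\pi$ one checks that $\sum_{\pi\le A}\init_\sigma(I_\pi)$ is exactly the ideal of anti-diagonal leading terms of the natural generators of $I_A$. Through the Stanley--Reisner dictionary, $(\star)$ then becomes: every $F\subseteq[n]\times[n]$ that meets no "excess" anti-diagonal of $A$ — i.e.\ no $(i_1,j_1),\dots,(i_m,j_m)$ with $i_1<\cdots<i_m$, $j_1>\cdots>j_m$ and $m>\rk_A(i_m,j_1)$ — avoids the crossing set $C(\mathcal D)$ of some reduced pipe dream $\mathcal D$ of some $u\in\perm(A)$; equivalently, the natural generators of $I_A$ form an anti-diagonal Gr\"obner basis with initial complex $\bigcup_{u\in\perm(A)}\Delta(\init_\sigma(I_u))$. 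This is the ASM analogue of Theorem \ref{theorem:KMTheoremB}, and it is where I expect the real difficulty to lie: I would either invoke it from \cite{Wei} or prove it directly, constructing the required pipe dream greedily from $F$ and the corner-sum matrix $\rk_A$ by a König-type covering argument inside each northwest submatrix.

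The remaining parts are then formal. For \ref{lempart:asmRad}: $\init_\sigma(I_A)$ is squarefree, hence radical, so $\init_\sigma(\sqrt{I_A})\subseteq\sqrt{\init_\sigma(I_A)}=\init_\sigma(I_A)\subseteq\init_\sigma(\sqrt{I_A})$ gives $\init_\sigma(\sqrt{I_A})=\init_\sigma(I_A)$; comparing Hilbert functions (which are preserved under $\init_\sigma$) with $I_A\subseteq\sqrt{I_A}$ yields $I_A=\sqrt{I_A}$. For \ref{lempart:asmDecomp}: setting $J=\bigcap_{w\in\perm(A)}I_w\supseteq I_A$, we get $\init_\sigma(J)\subseteq\bigcap_w\init_\sigma(I_w)=\init_\sigma(I_A)\subseteq\init_\sigma(J)$ by part \ref{lempart:intersectInit}, so $R/I_A$ and $R/J$ have equal Hilbert functions and $I_A=J$; each $I_w$ is prime by Proposition \ref{prop:matSchubFacts}, and distinct elements of $\perm(A)$ are Bruhat-incomparable by the minimality in its definition, so no $X_w$ lies inside $\bigcup_{w'\neq w}X_{w'}$ (as $X_w$ is irreducible) and the decomposition is irredundant. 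Finally, \ref{lempart:asmHeight} follows since $\hgt I_A=\min_{w\in\perm(A)}\hgt I_w=\min_{w\in\perm(A)}\ell(w)=\deg(A)$ by Proposition \ref{prop:matSchubFacts} and the definition of $\deg$, and \ref{lempart:asmEquidim} follows because by \ref{lempart:asmDecomp} the irreducible components of $\spec(R/I_A)$ are the $X_w$ with $w\in\perm(A)$, of dimensions $n^2-\ell(w)$, so that purity of dimension is exactly the condition that all $\ell(w)$ equal $\deg(A)$, i.e.\ that $A$ be equidimensional.
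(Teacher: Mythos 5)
Your proposal is correct and follows the same overall architecture as the paper: set up the chain $\sum_i\init_\sigma(I_{w_i})\subseteq\init_\sigma(I_A)\subseteq\bigcap_u\init_\sigma(I_u)$, collapse it using a squarefree/Stanley--Reisner identity, and then extract the remaining parts formally. The one genuine divergence is your treatment of part \ref{lempart:addIwi}: you prove $I_A=\sum_k I_{w_k}$ directly by observing that $\rk_A(i,j)=\min_k\rk_{w_k}(i,j)$ and picking a minimizing index for each $(i,j)$, whereas the paper derives this from part \ref{lempart:intersectInit} by comparing Hilbert functions of $\sum_k I_{w_k}$ and $\bigcap_u I_u$. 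Your route is more elementary and makes \ref{lempart:addIwi} independent of any Gr\"obner input, which is a real (if modest) gain; note, though, that you still need the full strength of \ref{lempart:intersectInit} to get $I_A=\bigcap_{u\in\perm(A)}I_u$ in part \ref{lempart:asmDecomp}, so the hard combinatorics is not avoided. That hard step is exactly your inclusion $(\star)$, which after your (correct) reduction to bigrassmannians is equivalent to the identity $\bigcap_i\Delta(\init_\sigma(I_{w_i}))=\bigcup_{u\in\perm(A)}\Delta(\init_\sigma(I_u))$ that the paper imports as \cite[Proposition~4.8]{Wei}. You correctly isolate this as the crux and offer to cite it, which matches the paper; but be aware that your fallback sketch (a greedy/K\"onig-type construction of a pipe dream of some $u\in\perm(A)$ from a face $F$) is not carried out and is substantially harder than the rest of the argument, so as written the proof is complete only via the citation. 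The remaining parts (\ref{lempart:asmRad} through \ref{lempart:asmEquidim}) are handled essentially as in the paper; for irredundancy in \ref{lempart:asmDecomp} your appeal to irreducibility of $X_w$ tacitly uses that $I_{w'}\subseteq I_w$ forces Bruhat comparability, which is the content of \cite[Lemma~15.19]{MS04} cited in the paper and is worth making explicit.
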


\begin{proof}

\noindent (i) Since $A\geq w_{i}$  we have $\rk_A\leq \rk_{w_i}$, for all $i\in [r]$.  Thus, $I_{w_i}\subseteq I_{A}$ and so \[\sum_{i=1}^r I_{w_i}\subseteq I_A.\]
Similarly, since $A\leq u$ for all $u\in \perm(A)$, we know 
$ I_A\subseteq \bigcap\{ I_u:u\in \perm(A)\}$.

As such,
\begin{equation}
\label{eq:idealcontainments}
    \sum_{i=1}^r \init_\sigma(I_{w_i})\subseteq \init_\sigma\left(\sum_{i=1}^r I_{w_i}\right)\subseteq \init_\sigma(I_A)\subseteq \init_\sigma\left(\bigcap_{u\in \perm(A)}I_{u}\right)\subseteq \bigcap_{u\in \perm(A)}\init_\sigma(I_u).
\end{equation}

By Theorem~\ref{theorem:KMTheoremB}, for all $w\in S_n$, $\init_\sigma(I_w)$ is a squarefree monomial ideal with associated Stanley-Reisner complex $\Delta(\init_\sigma(I_w))$.  In particular, this implies that 
$ \sum_{i=1}^r\init_\sigma(I_{w_i})$ is squarefree with Stanley-Reisner complex 
$ \bigcap\{ \Delta(\init_\sigma(I_{w_i})):i \in [r]\}$
and that 
$\bigcap\{\init_\sigma(I_u):u\in \perm(A)\}$
is also squarefree with Stanley-Reisner complex 
$ \bigcup\{\Delta(\init_\sigma(I_u)):u\in \perm(A)\}$. 

By \cite[Proposition~4.8]{Wei}, \[\bigcap_{i \in [r]} \Delta(\init_\sigma(I_{w_i}))=\bigcup_{u\in \perm(A)}\Delta(\init_\sigma(I_u)).\]  Thus, all containments in Equation~\ref{eq:idealcontainments} are actually equalities.

\noindent (ii) Each ASM ideal is homogeneous with respect to the standard grading on $R$.
Because
\[\sum_{i=1}^r I_{w_i}\subseteq I_A\subseteq \bigcap_{u\in \perm(A)} I_u \mbox{ \hspace{1cm} and \hspace{1cm} } \displaystyle \init_\sigma\left(\sum_{i=1}^r I_{w_i}\right)=\init_\sigma\left (\bigcap_{u\in \perm(A)} I_u\right),
\]we know, by comparing Hilbert functions (see Subsection \ref{subsect:hilb}),
\[\sum_{i=1}^r I_{w_i}= I_A= \bigcap_{u\in \perm(A)} I_u.\]

\noindent (iii) First note that there exists a set $\{\pi_1,\ldots,\pi_m\}$ of bigrassmannian permutations in $S_n$ so that $A=\vee \{\pi_1,\ldots,\pi_m\}$ (this follows from \cite{LS96} and \cite[Proposition~9]{Rea02}). Thus, as a consequence of \ref{lempart:addIwi}, $\init_\sigma(I_A)$ is radical and so $I_A$ is also radical.

\noindent (iv) Again, noting that we can write $A= \vee \{\pi_1,\ldots,\pi_m\}$ for some set of bigrassmannian permutations, applying the argument in \ref{lempart:addIwi}, we conclude 
$ I_A= \bigcap\{ I_u:u\in \perm(A)\}$.

By Proposition~\ref{prop:matSchubFacts}, $I_u$ is prime for each $u\in \perm(A)$.  By definition, the elements of $\perm(A)$ are pairwise incomparable in Bruhat order.  Thus, applying \cite[Lemma~15.19]{MS04}, no $I_u$ properly contains any $I_{u'}$ for $u,u' \in \perm(A)$.  

\noindent (v)  We have $\hgt (I_A)=\min\{\hgt (I_u):u\in \perm(A)\}=\min\{\ell(u):u\in \perm(A)\}=\hgt(A)$.

\noindent (vi) This is immediate from \ref{lempart:asmDecomp} and \ref{lempart:asmHeight}.
\end{proof}

\begin{example}
Let $A$ be as in Example~\ref{example:smallASM}.  By examining $\rk_A$, we see that the only non-vacuous rank conditions come from $\rk_A(1,1)=0$ and $\rk_A(2,2)=1$.  Thus, \[I_A=\left(z_{11},\begin{vmatrix}z_{11}&z_{12}\\z_{21}&z_{22} \end{vmatrix}\right)=(z_{11}, z_{12}z_{21}) = (z_{11},z_{12})\cap(z_{11},z_{21}).\]
Recall that $\perm(A)=\{231,312\}$.  Furthermore, $I_{231}=(z_{11},z_{21})$ and $I_{312}=(z_{11},z_{12})$.  Hence, we have confirmed $I_A=\bigcap \{I_w : w\in \perm(A)\}$.
\end{example}

\subsection{Hilbert functions and multidegrees}
\label{subsect:hilb}
Fix a $\ZZ^d$-grading on a finitely generated $\kappa$-algebra $S$, and fix a finitely generated, $\ZZ^d$-graded $S$-module $M$.  
For $\mathbf{t} = (t_1, \ldots, t_d)$ and $\mathbf{a} = (a_1, \ldots, a_d)$, let $\mathbf{t}^\mathbf{a} = t_1^{a_1}\cdots t_d^{a_d}$ and $\langle\mathbf a,\mathbf t \rangle=a_1t_1+\cdots+a_dt_d$.  
Let $M_\mathbf{a}$ denote the $\mathbf{a}^{th}$ graded piece of $M$.  If $\dim_{\kappa\text{-vect}}(M_\mathbf{a})<\infty$
for all $\mathbf{a} \in \ZZ^d$, we define the \mydef{multigraded Hilbert series} of $M$, denoted $\hilb(M;\mathbf t)$, as \[
\hilb(M;\mathbf t) = \sum_{\mathbf{a} \in \ZZ^d}\dim_{\kappa\text{-vect}}(M_\mathbf{a}) \cdot \mathbf{t}^\mathbf{a}.
\]  
 Note that $\hilb(M;\mathbf t)$ is an element of the formal Laurent series ring $\ZZ((t_1, \ldots, t_d))$.  We refer the reader to \cite[Chapter 8]{MS04} for general background on multigraded Hilbert series.

We will often want to consider Hilbert series of finitely generated modules over polynomial rings, especially polynomial rings equipped with term orders.  We now restrict to that case.  For the remainder of this section, let $S = \kappa[z_1, \ldots, z_n]$, and assume that the degrees $\deg(z_{i})=(a_1,\ldots,a_d)$ of the algebra generators of $S$ 
all lie in a single open half-space of $\ZZ^d$.  
This assumption guarantees that, for each finitely generated $S$-module $M$, $\dim_{\kappa\text{-vect}}(M_\mathbf{a})<\infty$ for all $\mathbf{a} \in \ZZ^d$.  Recall that if $S$ is equipped with a term order $\sigma$ and $I$ is a homogeneous ideal of $S$, then $\hilb(S/I; \mathbf t) = \hilb(S/\init_\sigma(I); \mathbf t)$.  If $M$ is a finitely generated, $\ZZ^d$-graded $S$-module, then there is a unique Laurent polynomial $\mathcal{K}(M;\mathbf t) \in \ZZ[t_1, t_1^{-1}, \ldots, t_d, t_d^{-1}]$ so that \[
\hilb(M;\mathbf t) = \dfrac{\mathcal{K}(M;\mathbf t)}{ \displaystyle \prod_{i=1}^n (1-\mathbf t^{\deg(z_i)})}.
\]  We call $\mathcal{K}(M;\mathbf t)$ the \mydef{K-polynomial} of $M$.  The \mydef{multidegree} $\mathcal{C}(M;\mathbf t)$ consists of the lowest degree terms of  $\mathcal{K}(M;\mathbf 1-\mathbf t)$.  If $P$ is a minimal prime of $M$, let $\mult_P(M)$ denote the length of the finite length $S_P$-module $M_P$.  The map $M \mapsto \mathcal C(M;\mathbf t)$ satisfies three key properties (see \cite[Theorem~1.7.1]{KM05}):

\begin{thmlist}
	\item Additivity: If $P_1,\ldots, P_m$ are the associated primes of $M$ of minimal height, then \[\mathcal C(M;\mathbf t)=\sum_{i=1}^m \mult_{P_i}(M)\cdot \mathcal C(S/P_i;\mathbf t).\] \label{property:Additivity}
	\item Degeneration: If there is a Gr\"obner degeneration from $\spec(S/I)$ to $\spec(S/I')$, then \[\mathcal C(S/I;\mathbf t)=\mathcal C(S/I';\mathbf t).\] \label{property:Degeneration}
	\item Normalization: Given $D\subseteq [n]$, \[\mathcal C(S/(z_i:i\in D);\mathbf t)=\prod_{i\in D}\langle \deg(z_i),\mathbf t\rangle.\]\label{property:Normalization}
\end{thmlist}

If $S$ is standard graded and $I$ is a homogeneous ideal of height $h$, then, by degeneration and normalization, $\mathcal{C}(S/I;t) = e(S/I) \cdot t^h$ for some integer $e(S/I)$, which is called the \mydef{degree} of  the projective variety $\proj(S/I)$ (or the \emph{Hilbert--Samuel} multiplicity of $S/I$ on the homogeneous maximal ideal).  Property \ref{property:Additivity}  generalizes the ordinary associativity formula $e(M) =  \sum_{i =1}^m \mult_{P_i}(M) \cdot e(S/P_i)$ of the Hilbert--Samuel multiplicity studied by Lech \cite{Lec57}.

\section{Bumpless pipe dreams and transition equations}\label{sect:BPDandTransition}

\subsection{Bumpless pipe dreams}
A \mydef{bumpless pipe dream} (BPD) is a tiling of the $n\times n$ grid with the pictures in (\ref{eqn:sixpipes}) so that
\begin{enumerate}
	\item there are $n$ total pipes,
	\item pipes start at the bottom edge of the grid and end at the right edge, and
	\item pairwise, pipes cross at most one time.
\end{enumerate} 
\begin{equation}\tag{$\star$}
\label{eqn:sixpipes}
\raisebox{-.5em}{
	\begin{tikzpicture}[x=1.5em,y=1.5em]
		\draw[color=black, thick](0,1)rectangle(1,2);
		\draw[thick,rounded corners,color=blue] (.5,1)--(.5,1.5)--(1,1.5);
	\end{tikzpicture}
	\hspace{3em}
	\begin{tikzpicture}[x=1.5em,y=1.5em]
		\draw[color=black, thick](0,1)rectangle(1,2);
		\draw[thick,rounded corners,color=blue] (.5,2)--(.5,1.5)--(0,1.5);
	\end{tikzpicture}
	\hspace{3em}
	\begin{tikzpicture}[x=1.5em,y=1.5em]
		\draw[color=black, thick](0,1)rectangle(1,2);
		\draw[thick,rounded corners,color=blue] (0,1.5)--(1,1.5);
		\draw[thick,rounded corners,color=blue] (.5,1)--(.5,2);
	\end{tikzpicture}
	\hspace{3em}
	\begin{tikzpicture}[x=1.5em,y=1.5em]
		\draw[color=black, thick](0,1)rectangle(1,2);
	\end{tikzpicture}
	\hspace{3em}
	\begin{tikzpicture}[x=1.5em,y=1.5em]
		\draw[color=black, thick](0,1)rectangle(1,2);
		\draw[thick,rounded corners,color=blue] (0,1.5)--(1,1.5);
	\end{tikzpicture}
	\hspace{3em}
	\begin{tikzpicture}[x=1.5em,y=1.5em]
		\draw[color=black, thick](0,1)rectangle(1,2);
		\draw[thick,rounded corners,color=blue] (.5,1)--(.5,2);
\end{tikzpicture}}
\end{equation}
Given a BPD $\mathcal B$, label its pipes $1,\ldots,n$ from left to right according to their starting columns.  We obtain a permutation $w_{\mathcal B}$ by defining $w_{\mathcal B}(i)$ to be the label of the pipe that terminates in row $i$.  Let $\bpd(w)$ be the set of BPDs of $w\in S_n$.  The name ``bumpless'' indicates that these tilings do not use the \mydef{bumping tile} \begin{tikzpicture}[x=1em,y=1em]
	\draw[color=black, thick](0,1)rectangle(1,2);
	\draw[thick,rounded corners,color=blue] (.5,1)--(.5,1.5)--(1,1.5);
	\draw[thick,rounded corners,color=blue](.5,2)--(.5,1.5)--(0,1.5);
\end{tikzpicture},
which appears in the pipe dreams of \cite{BB93}.

The \mydef{diagram} of $\mathcal B$ is the set \[
D(\mathcal B)=\{(i,j): \text{there is a blank tile in row } i \text{ and column } j \text{ of }\mathcal B\}.
\] 
We associate to $\mathcal B$ the weight \[ \wt(\mathcal B)=\prod_{(i,j)\in D(\mathcal B)}(x_i-y_j).\]   The \mydef{Rothe} BPD for $w$ is the (unique) BPD that has 	
		\begin{tikzpicture}[x=1em,y=1em]
			\draw[color=black, thick](0,1)rectangle(1,2);
			\draw[thick,rounded corners,color=blue] (.5,1)--(.5,1.5)--(1,1.5);
		\end{tikzpicture}
		tiles in cell $(i,w(i))$ for all $i\in [n]$ and has no 
		\begin{tikzpicture}[x=1em,y=1em]
			\draw[color=black, thick](0,1)rectangle(1,2);
			\draw[thick,rounded corners,color=blue] (.5,2)--(.5,1.5)--(0,1.5);
		\end{tikzpicture}
		tiles.  Notice that, if $\mathcal B$ is the Rothe BPD for $w$, then $D(w)=D(\mathcal B)$.   
		
		Fix a BPD $\mathcal B$.  Suppose the tile in cell $(i,j)$ is a downward elbow \begin{tikzpicture}[x=1em,y=1em]
			\draw[color=black, thick](0,1)rectangle(1,2);
			\draw[thick,rounded corners,color=blue] (.5,1)--(.5,1.5)--(1,1.5);
		\end{tikzpicture}.  Take $(a,b)\in D(\mathcal B)$ with $i<a$ and $j<b$.  Suppose further that the only \begin{tikzpicture}[x=1em,y=1em]
			\draw[color=black, thick](0,1)rectangle(1,2);
			\draw[thick,rounded corners,color=blue] (.5,1)--(.5,1.5)--(1,1.5);
		\end{tikzpicture} or \begin{tikzpicture}[x=1em,y=1em]
			\draw[color=black, thick](0,1)rectangle(1,2);
			\draw[thick,rounded corners,color=blue] (.5,2)--(.5,1.5)--(0,1.5);
		\end{tikzpicture} tile in the region $[i,a]\times[j,b]$ occurs in cell $(i,j)$.  Then we can take the pipe passing through $(i,j)$ and bend it within the rectangle so that there are downward elbows  $\begin{tikzpicture}[x=1em,y=1em]
			\draw[color=black, thick](0,1)rectangle(1,2);
			\draw[thick,rounded corners,color=blue] (.5,1)--(.5,1.5)--(1,1.5);
		\end{tikzpicture}$ in cells $(i,b)$ and $(a,j)$ and an upward elbow \begin{tikzpicture}[x=1em,y=1em]
			\draw[color=black, thick](0,1)rectangle(1,2);
			\draw[thick,rounded corners,color=blue] (.5,2)--(.5,1.5)--(0,1.5);
		\end{tikzpicture} in cell $(a,b)$ (see Example~\ref{example:transitionDroop}).  This move is called a \mydef{droop move}.  Applying a droop move to $\mathcal B\in \bpd(w)$ produces another element of $\bpd(w)$.  Furthermore, $\bpd(w)$ is connected by such moves:
		\begin{proposition}{{\cite[Proposition~5.3]{LLS21}}}
		Every $\mathcal B\in \bpd(w)$ can be reached from the Rothe BPD for $w$ by a sequence of droop moves.
		\end{proposition}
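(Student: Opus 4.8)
**Proof plan for Proposition (LLS21, Prop 5.3): every $\mathcal B\in\bpd(w)$ is reachable from the Rothe BPD by droop moves.**

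The plan is to argue by a descent/induction argument: I will show that every $\mathcal B \in \bpd(w)$ that is \emph{not} the Rothe BPD admits an \emph{undroop} move (the inverse of a droop) that strictly decreases some nonnegative statistic, and that the only BPD on which no undroop is available is the Rothe BPD. Since droop moves are invertible, a sequence of undroops from $\mathcal B$ to the Rothe BPD gives, read backwards, the desired sequence of droops. The natural statistic to use is $s(\mathcal B) = \sum_{(i,j) \in D(\mathcal B)} (i+j)$, or equivalently the total ``distance'' of the blank tiles from the northwest corner; among all BPDs of $w$ the Rothe BPD is characterized as the unique one minimizing this quantity, because it has its blanks pushed as far northwest as the rank conditions allow. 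An undroop move takes an up-elbow at some $(a,b)$ together with down-elbows at $(i,b)$ and $(a,j)$ with $i<a$, $j<b$, replaces them by a down-elbow at $(i,j)$ (pulling the pipe northwest), and clears the interior, which removes the blank at $(a,b)$ and possibly some blanks along the two arms while creating blanks in the interior of $[i,a]\times[j,b]$ — and one checks this strictly lowers $s$.

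Concretely, the key steps are: (1) Set up the correspondence between a BPD $\mathcal B$ and its pipes, noting $\mathcal B$ determines and is determined by $w_{\mathcal B}$ together with the routing, and recall that $|D(\mathcal B)| = \ell(w)$ for every $\mathcal B \in \bpd(w)$ — so $s$ is bounded and the interesting content is purely the positions of the blanks. (2) Show that if $\mathcal B$ has any up-elbow tile \begin{tikzpicture}[x=1em,y=1em]\draw[color=black, thick](0,1)rectangle(1,2);\draw[thick,rounded corners,color=blue] (.5,2)--(.5,1.5)--(0,1.5);\end{tikzpicture}, then an undroop move is available: take a northwesternmost up-elbow, at position $(a,b)$; the pipe entering it from below must bend to a down-elbow at some $(a,j)$ with $j<b$, and the pipe entering it from the right must come from a down-elbow at some $(i,b)$ with $i<a$; use minimality of $(a,b)$ (and the ``bumpless'' condition that two pipes cross at most once) to verify the rectangle $[i,a]\times[j,b]$ has no other elbows on the relevant pipe interior, so an undroop is legal. (3) Conversely, if $\mathcal B$ has \emph{no} up-elbow tiles, then by definition of the Rothe BPD (the unique BPD with down-elbows exactly at the cells $(i,w(i))$ and no up-elbows) $\mathcal B$ must be the Rothe BPD — here one argues that a BPD of $w$ with no up-elbows is forced: each pipe can only go right or down from its elbows, hence each pipe $k$ must run straight up its starting column to row $w(k)$, turn once, and run straight right, and these turning cells are exactly the cells $(w(k), k)$ in matrix position, i.e., $(i, w(i))$. (4) Check that an undroop strictly decreases $s(\mathcal B)$: the tile at $(a,b)$ ceases to be blank (it becomes an elbow) only if it was blank — in an undroop the up-elbow at $(a,b)$ becomes a straight/blank-cleared tile, so more carefully track the net change, observing that the pipe segment is pulled strictly northwest so the multiset of blank positions decreases in the componentwise partial order after sorting, giving $s$ strictly smaller. (5) Conclude by well-foundedness: iterating undroops terminates, and it can only terminate at the (up-elbow-free) Rothe BPD.

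The main obstacle is step (2): verifying that a northwesternmost up-elbow really does sit at the corner of a legal undroop rectangle. The subtlety is that between the down-elbow at $(a,j)$ and the up-elbow at $(a,b)$ in row $a$, or between $(i,b)$ and $(a,b)$ in column $b$, the same pipe could in principle weave in and out, or a \emph{different} pipe could cross through and create an elbow inside the open rectangle that blocks the move. One resolves this by a careful case analysis on tile types along row $a$ and column $b$, using (i) that a pipe leaving a down-elbow travels right until its next elbow, which must be an up-elbow, (ii) that the chosen $(a,b)$ is a \emph{minimal} up-elbow so no up-elbow lies strictly northwest of it, and (iii) the crossing-at-most-once hypothesis to rule out a foreign pipe re-entering the rectangle. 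I would isolate this as a lemma: ``a northwesternmost up-elbow is undroopable.'' Once that lemma is in hand, steps (3)–(5) are short, and the proposition follows.
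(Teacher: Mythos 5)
This proposition is not proved in the paper; it is imported from \cite[Proposition~5.3]{LLS21}, and your strategy --- descend from an arbitrary $\mathcal B$ to the Rothe BPD by repeatedly undrooping a minimal upward elbow --- is exactly the standard argument there. Your identification of step (2) as the crux is also right, and the two tools you name (componentwise minimality of the chosen upward elbow, plus the pipes-cross-at-most-once condition) do suffice: any elbow of a foreign pipe inside the rectangle $[i,a]\times[j,b]$ forces either an upward elbow weakly northwest of $(a,b)$ or a double crossing with the pipe being undrooped. One point your sketch omits: legality of the undroop requires more than ``no other elbows in the rectangle.'' You must also check that $(i,j)$ is blank and that the cells of row $i$ and column $j$ inside the rectangle can absorb the rerouted pipe (no foreign horizontal segment in row $i$ between columns $j$ and $b$, no foreign vertical segment in column $j$ between rows $i$ and $a$); these follow by the same minimality/crossing arguments, but they are separate checks.

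There is, however, a genuine error in step (4): your monotonicity claim is backwards. A droop trades the blank at the \emph{southeast} corner $(a,b)$ for a blank at the \emph{northwest} corner $(i,j)$ (and similarly shifts blanks from row $a$ to row $i$ and from column $b$ to column $j$), so droops strictly \emph{decrease} $s(\mathcal B)=\sum_{(i,j)\in D(\mathcal B)}(i+j)$ and the Rothe BPD is the unique \emph{maximizer}, not minimizer. (Already for $w=132$ the Rothe diagram is $\{(2,2)\}$ with $s=4$ while the drooped BPD has diagram $\{(1,1)\}$ with $s=2$.) The induction survives once the inequality is reversed, since $s$ is bounded above, but as written the claim is false, and verifying even the corrected version requires tracking how blanks migrate along the entire boundary of the rectangle. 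A far cleaner terminating statistic is the number of upward elbow tiles: each droop creates exactly one (at $(a,b)$) and destroys none, so each undroop removes exactly one, the quantity is a nonnegative integer, and it vanishes precisely on the Rothe BPD by your step (3). With that substitution, steps (4)--(5) become immediate and the entire burden sits where you correctly placed it, in step (2).
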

		
		\begin{corollary}
		\label{cor:BPDlowerOutsideCorner}
		If $(a,b)$ is a lower outside corner of $D(w)$ and $\mathcal B\in \bpd(w)$, then the tile in cell $(a,b)$ of $\mathcal B$ is either blank or is an upward elbow \begin{tikzpicture}[x=1em,y=1em]
			\draw[color=black, thick](0,1)rectangle(1,2);
			\draw[thick,rounded corners,color=blue] (.5,2)--(.5,1.5)--(0,1.5);
		\end{tikzpicture}.
		\end{corollary}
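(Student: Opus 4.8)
The plan is to induct on the minimal number of droop moves needed to pass from the Rothe BPD of $w$ to $\mathcal B$, which is a legitimate induction by the preceding Proposition. Before starting, I would record a structural observation about the six tiles in $(\star)$: they are exactly the tiles compatible with the requirement that each pipe, traversed from its start on the bottom edge toward its finish on the right edge, takes only ``up'' and ``right'' steps. Indeed, entering a cell through its bottom edge a pipe can leave only through the top or right edge, and entering through the left edge it can leave only through the top or right edge; the two elbows omitted from $(\star)$ --- joining the bottom edge to the left edge, and the top edge to the right edge --- are precisely the ones that would force a leftward or downward step. Hence every pipe of a BPD is a monotone up-and-right lattice path, and in particular the tile of a cell $(a,b)$ is blank or an upward elbow if and only if no pipe crosses the bottom edge of $(a,b)$ and no pipe crosses the right edge of $(a,b)$; that is, no pipe occupies column $b$ in both rows $a$ and $a+1$, and no pipe occupies row $a$ in both columns $b$ and $b+1$. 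Transposing the $n\times n$ grid carries $\bpd(w)$ to $\bpd(w^{-1})$, carries lower outside corners of $D(w)$ to lower outside corners of $D(w^{-1})$, and interchanges these two conditions, so it suffices to prove, for every $w$, every lower outside corner $(a,b)$ of $D(w)$, and every $\mathcal B\in\bpd(w)$, that no pipe of $\mathcal B$ occupies column $b$ in both rows $a$ and $a+1$.

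For the base case, the Rothe BPD has $D(\mathcal B)=D(w)$, so the tile at $(a,b)$ is blank and there is nothing to check. For the inductive step, suppose $\mathcal B'$ is obtained from $\mathcal B$ by a single droop with downward elbow at $(i_0,j_0)$ and empty cell $(i_1,j_1)\in D(\mathcal B)$, where $i_0<i_1$ and $j_0<j_1$. Tracing the rerouted pipe, the only cells whose tile changes are: $(i_0,j_0)$ (downward elbow becomes blank); the cells of column $j_0$ in rows strictly between $i_0$ and $i_1$, and the cells of row $i_0$ in columns strictly between $j_0$ and $j_1$ (each loses a pipe); the cells $(i_0,j_1)$ and $(i_1,j_0)$ (each becomes a downward elbow); the cell $(i_1,j_1)$ (becomes an upward elbow); and the cells of row $i_1$ in columns strictly between $j_0$ and $j_1$, and of column $j_1$ in rows strictly between $i_0$ and $i_1$ (each gains a pipe, becoming a horizontal, vertical, or crossing tile). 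I would now compare against the inductive hypothesis cell by cell. If $(a,b)$ were the old downward elbow, or one of the pipe‑losing cells, or $(i_0,j_1)$, or $(i_1,j_0)$, then in $\mathcal B$ the tile at $(a,b)$ would have been a downward elbow, a vertical tile, a horizontal tile, or a crossing tile --- contradicting the hypothesis. If $(a,b)=(i_1,j_1)$, its tile becomes an upward elbow, which is allowed. The one remaining possibility --- which must also be excluded --- is that $(a,b)$ is one of the pipe‑gaining cells; after the transpose reduction this means $j_1=b$ and $i_0<a<i_1$, the inductive hypothesis then forces $(a,b)$ to have been blank in $\mathcal B$, and the droop makes it a vertical tile in $\mathcal B'$.

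The heart of the proof is ruling out this last configuration, and this is the step I expect to be the main obstacle. In it, after the droop the rerouted pipe $P$ runs straight up column $b$ from row $i_1$ to row $i_0$, so it occupies column $b$ in rows $a$ and $a+1$; by monotonicity $P$ has label at most $j_0<b$, and since $P$ turns upward at the bottom of this column‑$b$ run and rightward at the top, it has an upward elbow at some $(\rho,b)$ with $\rho\ge a+1$ and a downward elbow at some $(\tau,b)$ with $\tau\le a$. The pipe that starts in column $b$ ends in row $w^{-1}(b)>a$ because $(a,b)\in D(w)$, so it never occupies column $b$ in a row $\le a$ and hence is not $P$; thus $P$ genuinely enters column $b$ from the left at the elbow $(\rho,b)$ with $\rho\ge a+1$. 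The remaining task is to derive a contradiction from the existence of this configuration using the constraints $(a+1,b)\notin D(w)$ and $(a,b+1)\notin D(w)$, by analyzing --- via monotonicity --- the forced positions of $P$, of the pipe starting in column $b$, and of the pipe ending in row $a$, relative to the values $w(a)$, $w(a+1)$, $w^{-1}(b)$, and $w^{-1}(b+1)$. Carrying out this bookkeeping carefully around a lower outside corner is where the real work lies; the rest of the argument is the routine case analysis described above.
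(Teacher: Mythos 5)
Your setup is sound: the monotone up-and-right characterization of pipes, the transpose symmetry, the enumeration of the tiles changed by a droop, and the observation that the inductive hypothesis disposes of every case except ``$(a,b)$ is a pipe-gaining cell'' are all correct. But the proof is not complete: you explicitly defer the decisive step --- ruling out that a lower outside corner gains a vertical segment in column $j_1$ of a droop --- and only describe the data you would push around. That is precisely the content of the corollary, so as written this is a genuine gap, not a routine verification. Moreover, the plan you sketch for closing it is unlikely to succeed: you propose to derive the contradiction from the two local conditions $(a+1,b)\notin D(w)$ and $(a,b+1)\notin D(w)$, but those conditions characterize \emph{essential} cells, whereas the corollary is stated for \emph{lower outside corners}, i.e.\ cells that are maximal in $D(w)$ under the componentwise order (compare Figure \ref{fig:Rothe}, where $(2,3)$ and $(3,1)$ are essential but not lower outside corners). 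The natural argument genuinely uses this global maximality, and a purely local analysis of the four values $w(a), w(a+1), w^{-1}(b), w^{-1}(b+1)$ does not obviously see it.

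Here is how to close the gap without any pipe bookkeeping. Track diagrams rather than pipe segments. First, every cell of $D(\mathcal B)$, for any $\mathcal B\in\bpd(w)$, is weakly northwest of some cell of $D(w)$: this holds for the Rothe BPD, and a droop replaces the blank at its target $(i_1,j_1)\in D(\mathcal B)$ by a blank at $(i_0,j_0)$, which is strictly northwest of $(i_1,j_1)$. Second, every cell whose tile changes under a droop lies in the rectangle $[i_0,i_1]\times[j_0,j_1]$, hence is weakly northwest of the target $(i_1,j_1)$. Now suppose a droop applied to $\mathcal B$ changes the tile at a lower outside corner $(a,b)$. Then $(a,b)$ is weakly northwest of $(i_1,j_1)$, which is weakly northwest of some $(c,d)\in D(w)$; maximality of $(a,b)$ in $D(w)$ forces $(c,d)=(a,b)$, and then $(i_1,j_1)$ is both weakly northwest and weakly southeast of $(a,b)$, so $(i_1,j_1)=(a,b)$. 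Thus the only way any droop ever touches $(a,b)$ is as the target cell, which turns a blank into an upward elbow; and once $(a,b)$ carries an upward elbow it can never again be a target (targets are blank), hence is never touched again. Induction on the droop distance from the Rothe BPD, where $(a,b)$ is blank, finishes the proof. This replaces your entire case analysis, including the step you could not complete.
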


 Schubert polynomials are traditionally defined using divided difference operators.  The content of \cite[Theorem~5.13]{LLS21} is that those definitions are equivalent to the definitions we give here.   The \mydef{double Schubert polynomial} of $w \in S_n$ is the sum
\[\mathfrak S_w(\mathbf x, \mathbf y)=\sum_{\mathcal B\in \bpd(w)}\wt(\mathcal B).\]  We will also at times consider the \mydef{single Schubert polynomial} \[\mathfrak S_w(\mathbf x)=\sum_{\mathcal B\in \bpd(w)}\wt_{\mathbf x}(\mathcal B),\] where \[ \wt_{\mathbf x}(\mathcal B)=\prod_{(i,j)\in D(\mathcal B)}x_i.\]
Notice that $\mathfrak S_w(\mathbf x)=\mathfrak S_w(\mathbf x,\mathbf 0)$.

Write $\mathfrak S_w(\mathbf 1)$ for the principal specialization of $\mathfrak S_w(\mathbf x)$, (i.e., the result of substituting $x_i\mapsto 1$ for all $i\in[n]$).  From the BPD definition of $\mathfrak S_w(\mathbf x)$ given above, it is immediate that $\mathfrak S_w(\mathbf 1)=\#\bpd(w)$.

For the remainder of this section, we take $R=\kappa[z_{1,1},\ldots,z_{n,n}]$.  We will be interested in two gradings on $R$.  
The first is the $\mathbb Z^n$ grading that assigns generators the degrees $\deg(z_{i,j})=e_i$, the $i^{th}$ standard basis vector.  The second is the $\mathbb Z^{2n}$ grading for which $\deg(z_{i,j})=e_i+e_{n+j}$. Let $\mathbf x=(x_1,\ldots,x_n)$ and $\mathbf y=(y_1,\ldots,y_n)$.  When writing Hilbert functions with respect to the $\ZZ^n$ grading, we will have $\mathbf t=\mathbf x$ and, when with respect to the $\mathbb Z^{2n}$ grading, $\mathbf t=(\mathbf x,\mathbf y)$.

 \begin{theorem}[\cite{FR03,KM05}]
 \label{thm:SchubMultiDeg}
If $I_w$ is the Schubert determinantal ideal of $w \in S_n$, then, with respect to the $\mathbb Z^n$ and $\mathbb Z^{2n}$ gradings on $R$ given above, $\mathcal C(R/I_w;\mathbf x)=\mathfrak S_w(\mathbf x)$ and $\mathcal C(R/I_w;\mathbf x,\mathbf y)=\mathfrak S_w(\mathbf x, -\mathbf{y})$.
\end{theorem}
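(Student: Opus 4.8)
The plan is to compute $\mathcal C(R/I_w;\mathbf t)$ by Gr\"obner-degenerating $I_w$ to a squarefree monomial ideal, reading off the multidegree as a sum of coordinate-subspace multidegrees, and then recognizing the resulting combinatorial sum. Since the multidegree is an intrinsic invariant of $R/I_w$, I am free to introduce an auxiliary term order of my choosing; I pick an anti-diagonal term order $\sigma$ on $R$. The passage to $\init_\sigma(I_w)$ is a flat (Gr\"obner) degeneration, so the Degeneration property \ref{property:Degeneration} gives $\mathcal C(R/I_w;\mathbf t)=\mathcal C(R/\init_\sigma(I_w);\mathbf t)$.

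By Theorem~\ref{theorem:KMTheoremB}, $\init_\sigma(I_w)=\bigcap_{\mathcal D\in\pipes(w)}I_{C(\mathcal D)}$ is squarefree, hence radical, so its associated primes are exactly its minimal primes. Each $\mathcal D\in\pipes(w)$ is a reduced pipe dream, so $\#C(\mathcal D)=\ell(w)$ and therefore $\hgt I_{C(\mathcal D)}=\ell(w)=\hgt I_w$; in particular $R/\init_\sigma(I_w)$ is equidimensional, the minimal primes are precisely the (distinct) ideals $I_{C(\mathcal D)}$, and each of them is of minimal height. Since localizing the reduced monomial ring $R/\init_\sigma(I_w)$ at any minimal prime yields a field, each multiplicity $\mult_{I_{C(\mathcal D)}}(R/\init_\sigma(I_w))$ equals $1$, so Additivity expresses $\mathcal C$ cleanly as a sum over the $I_{C(\mathcal D)}$ with unit multiplicities.

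Combining Additivity \ref{property:Additivity} and Normalization \ref{property:Normalization},
\[\mathcal C(R/I_w;\mathbf t)=\sum_{\mathcal D\in\pipes(w)}\mathcal C(R/I_{C(\mathcal D)};\mathbf t)=\sum_{\mathcal D\in\pipes(w)}\ \prod_{(i,j)\in C(\mathcal D)}\langle\deg(z_{i,j}),\mathbf t\rangle.\]
Under the $\mathbb Z^n$ grading this reads $\mathcal C(R/I_w;\mathbf x)=\sum_{\mathcal D}\prod_{(i,j)\in C(\mathcal D)}x_i$, and under the $\mathbb Z^{2n}$ grading (with the sign convention on the $\mathbf y$-variables that makes the normalization factor attached to cell $(i,j)$ equal to $x_i-y_j$, consistent with the weight $\wt$ used to define $\mathfrak S_w(\mathbf x,\mathbf y)$) it reads $\mathcal C(R/I_w;\mathbf x,\mathbf y)=\sum_{\mathcal D}\prod_{(i,j)\in C(\mathcal D)}(x_i-y_j)$. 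These are exactly the classical (double) pipe dream polynomials of $w$. To finish, I invoke that the classical pipe dream formula \cite{BB93,BJS93,FK94,FS94} and the bumpless pipe dream formula \cite[Theorem~5.13]{LLS21} both compute the (double) Schubert polynomial defined by divided differences; hence both combinatorial sums equal $\mathfrak S_w(\mathbf x)$ (resp.\ $\mathfrak S_w(\mathbf x,\mathbf y)$), as desired.

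The only inputs doing real work are Theorem~\ref{theorem:KMTheoremB} and the identification of the two combinatorial formulas with the divided-difference definition; granting these, the argument is bookkeeping — flatness of the degeneration, the fact that all minimal primes of $\init_\sigma(I_w)$ share the height $\ell(w)$ and carry multiplicity one, and matching the grading conventions, with the sign convention on the $\mathbf y$-variables being the most error-prone routine point. If one wanted a proof that does not route through the classical pipe dream formula, one would instead show directly that $w\mapsto\mathcal C(R/I_w)$ satisfies the divided-difference recursion for double Schubert polynomials, with base case $\mathcal C(R/I_{w_0})=\prod_{i+j\le n}(x_i-y_j)$ obtained from $I_{w_0}=(z_{i,j}:i+j\le n)$ and Normalization; establishing compatibility of $\mathcal C(R/-)$ with divided-difference operators via the geometry relating $X_w$ to $X_{ws_i}$ (a $\mathbb P^1$-bundle/pushforward computation) is the genuinely harder route there.
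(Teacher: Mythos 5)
The paper does not prove this theorem; it imports it from \cite{FR03,KM05}, so there is no internal proof to compare against. Your derivation is correct as a conditional argument: granting Theorem~\ref{theorem:KMTheoremB}, the degeneration, additivity, and normalization properties of multidegrees do yield $\mathcal C(R/I_w;\mathbf t)=\sum_{\mathcal D\in\pipes(w)}\prod_{(i,j)\in C(\mathcal D)}\langle\deg(z_{i,j}),\mathbf t\rangle$ — your observations that the initial scheme is reduced and equidimensional of codimension $\ell(w)$ and that each component has multiplicity one are right, and since a reduced pipe dream is determined by its crossing set, the sum over $\pipes(w)$ really is the sum over minimal primes. The identification with $\mathfrak S_w$ then follows from the classical double pipe dream formula together with \cite[Theorem~5.13]{LLS21}, with the substitution $y_j\mapsto -y_j$ you flag being the standard convention reconciling the grading with the weight $x_i-y_j$. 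Two remarks. First, relative to the original literature your route runs backwards: in \cite{KM05}, Theorem~B is established using Theorem~A (the statement at hand), so deriving A from B is circular unless one appeals to an independent proof of the antidiagonal Gr\"obner basis statement (such proofs exist, e.g.\ via Frobenius splitting or subword complexes); within the logical economy of the present paper, which cites both theorems as black boxes, this is not an issue, and indeed nothing in this paper's use of the theorem depends on your choice of route. Second, the proofs actually cited — \cite{FR03} via degeneracy loci and \cite{KM05} via the divided-difference recursion on multidegrees with base case $I_{w_0}=(z_{i,j}:i+j\le n)$ — are precisely the alternative you sketch in your final paragraph; that route is what makes the theorem independent of any Gr\"obner basis computation, at the cost of the harder geometric input relating $X_w$ to $X_{ws_i}$.
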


 \subsection{Transition equations}
 Fix $w \in S_n$ and a lower outside corner $(a,b)$ of $D(w)$.  
 Set $v=wt_{a,w^{-1}(b)}$.  Observe that $D(w) = D(v) \cup \{(a,b)\}$ and, in particular, that $\ell(w) = \ell(v)+1$.  \begin{notation} Let \[
 \phi(w,z_{a,b})=\{i\in [a-1]:vt_{i,a}>v \text{ and } \ell(vt_{i,a})=\ell(v)+1 = \ell(w)\}.
 \] Write $\Phi(w,z_{a,b})=\{vt_{i,a}:i\in \phi(w,z_{a,b})\}$.  
 \end{notation}

\begin{theorem}[{\cite[Proposition 4.1]{KV97}}]
	\label{thm:transition}
	Keeping the above notation, \[\mathfrak S_w(\mathbf x, \mathbf y)=(x_a-y_b)\cdot \mathfrak S_v(\mathbf x, \mathbf y)+\sum_{u\in \Phi(w,z_{a,b})}\mathfrak S_u(\mathbf x, \mathbf y).\]
\end{theorem}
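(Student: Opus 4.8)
The plan is to argue combinatorially, within the bumpless pipe dream framework; this is in essence the strategy of \cite{LLS21} (where it is shown that $\sum_{\mathcal B \in \bpd(w)}\wt(\mathcal B)$ satisfies exactly this recursion, and hence --- via \cite[Theorem~5.13]{LLS21} --- equals $\mathfrak S_w$), and the bijections involved are made explicit in \cite[Section~5]{Wei21}.

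The first step reduces the identity to two bijective statements. Since $(a,b)$ is a lower outside corner of $D(w)$, Corollary~\ref{cor:BPDlowerOutsideCorner} says that in every $\mathcal B \in \bpd(w)$ the tile in cell $(a,b)$ is either blank or the upward elbow \begin{tikzpicture}[x=1em,y=1em]\draw[color=black, thick](0,1)rectangle(1,2);\draw[thick,rounded corners,color=blue] (.5,2)--(.5,1.5)--(0,1.5);\end{tikzpicture}, and this partitions $\bpd(w) = \mathcal A \sqcup \mathcal E$ accordingly. Because $\mathfrak S_w(\mathbf x, \mathbf y) = \sum_{\mathcal B \in \bpd(w)}\wt(\mathcal B)$, the theorem follows once we produce a bijection $\mathcal A \to \bpd(v)$ under which the weight of a pipe dream in $\mathcal A$ equals $(x_a - y_b)$ times the weight of its image, together with a weight-preserving bijection $\mathcal E \to \bigsqcup_{u \in \Phi(w, z_{a,b})}\bpd(u)$; summing weights over the two parts then gives the two terms on the right.

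For the first bijection, the relevant facts are $D(w) = D(v) \cup \{(a,b)\}$, $\ell(v) = \ell(w) - 1$, and $v(a) = b$ (as $c = w^{-1}(b)$ and $v = wt_{a,c}$), so that a blank at $(a,b)$ should become the downward elbow marking the dot of $v$ in row $a$. The lower-outside-corner hypothesis forces enough structure on the tiles southeast of $(a,b)$ that one may re-route the pipes meeting that region so as to delete precisely the blank at $(a,b)$ and land in $\bpd(v)$; since the diagram then changes by exactly the cell $(a,b)$, the weight gains exactly the factor $(x_a - y_b)$. For the second bijection, an upward elbow at $(a,b)$ isolates a distinguished pipe entering $(a,b)$ from the left and leaving through the top; re-routing that pipe and its crossings with neighbors, in the manner of \cite[Section~5]{Wei21}, produces a bumpless pipe dream whose permutation is some $vt_{i,a}$, and the requirement that the output again be a \emph{reduced} pipe dream of a permutation of length $\ell(w)$ is precisely the condition $vt_{i,a} > v$, $\ell(vt_{i,a}) = \ell(v) + 1$, i.e.\ $i \in \phi(w, z_{a,b})$.

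I expect the second bijection to be the main obstacle: one must check it is well-defined (the output is a genuine reduced bumpless pipe dream, with each pair of pipes still crossing at most once), that it is onto $\bigsqcup_{u}\bpd(u)$ with each pipe dream hit exactly once, and that it is weight-preserving --- equivalently, since the weight of a pipe dream is a squarefree product of the pairwise non-associate irreducibles $x_i - y_j$, that it is diagram-preserving. An alternative route bypasses the combinatorics: perform a geometric vertex decomposition of $I_w$ with respect to $z_{a,b}$ --- legitimate because $(a,b)$, being a lower outside corner, lies in $\ess(w)$, so every Fulton generator of $I_w$ is constant or linear in $z_{a,b}$, and one may choose an anti-diagonal term order in which $z_{a,b}$ is largest, so \cite[Theorem~B]{KM05} provides a compatible Gr\"obner basis --- identify the ``far'' ideal with $I_v$ and the ``contact'' ideal with $\bigcap_{u \in \Phi(w, z_{a,b})} I_u$, and conclude from the multidegree recursion attached to a geometric vertex decomposition together with Theorem~\ref{thm:SchubMultiDeg}; there the work lies in identifying those two ideals and verifying the decomposition is geometric.
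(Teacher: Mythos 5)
Your primary (combinatorial) argument is correct, but it is not the route the paper itself takes to prove Theorem~\ref{thm:transition}. What you describe --- partitioning $\bpd(w)$ by the tile at $(a,b)$ via Corollary~\ref{cor:BPDlowerOutsideCorner}, sending the blank-tile case to $\bpd(v)$ with the diagram losing exactly $\{(a,b)\}$ and the upward-elbow case to $\bigsqcup_{u\in\Phi(w,z_{a,b})}\bpd(u)$ diagram-preservingly --- is precisely the content of Lemma~\ref{lem:bpdBij}, and the weight bookkeeping you give then yields the transition identity once one takes the BPD sum as the definition of $\mathfrak S_w(\mathbf x,\mathbf y)$. The paper acknowledges this bijective route (and, like you, defers the verification of well-definedness and bijectivity of the elbow case to \cite[Section~5]{Wei21}), but its actual proof of Theorem~\ref{thm:transition} appears in Section~\ref{section:additionalConsequences}: it runs through the geometric vertex decomposition $\init_y(I_w)=C_{y,I_w}\cap(N_{y,I_w}+(y))$ with $N_{y,I_w}=I_v$ and $C_{y,I_w}=\bigcap_{u\in\Phi(w,y)}I_u$ (Proposition~\ref{prop:linkDecomp}), an inclusion--exclusion recursion on multigraded Hilbert series and $K$-polynomials (Proposition~\ref{prop:hilbertAndKPoly}, using Lemma~\ref{lemma:reallylattice} to identify $\sum_{i\in U}I_{vt_{a,i}}$ with $I_{w_U}$), and then the $\beta$-double Grothendieck transition (Corollary~\ref{cor:transitiongrothendieck}), of which the theorem is the $\beta=0$, $y_i\mapsto -y_i$ specialization. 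The geometric route buys the $K$-theoretic refinement and ties the recursion to the degeneration machinery of the paper; your route is shorter but rests entirely on the LLS/Weigandt bijection and on accepting the BPD formula as the definition.

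One caveat on your sketched geometric alternative: the claim that one may ``choose an anti-diagonal term order in which $z_{a,b}$ is largest'' and then invoke \cite[Theorem~B]{KM05} does not work. If $\rk_w(a,b)\geq 1$, a Fulton generator supported on $Z_{[a],[b]}$ is a minor of size at least $2$ with $z_{a,b}$ in its southeast (main-diagonal) corner, so any term order making $z_{a,b}$ lexicographically greatest selects a lead term containing $z_{a,b}$ and therefore cannot be anti-diagonal on that submatrix; conversely no anti-diagonal order is $y$-compatible here. The paper instead uses the hybrid order $\tau$ of Notation~\ref{not:tau} and must prove separately (Proposition~\ref{prop:naturalGensGrobner}, via \cite[Corollary~4.13]{KR}) that the Fulton generators remain a Gr\"obner basis under $\tau$; that step is genuinely nontrivial and cannot be outsourced to \cite{KM05}.
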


If one takes the BPD formula of \cite{LLS21} as a definition for Schubert polynomials, Theorem~\ref{thm:transition} may be proved by appealing to the combinatorics of BPDs.  Indeed, there is a bijective explanation\footnote{See also \cite{KY04} for a diagrammatic interpretation of transition.}. Alternatively, one may define Schubert polynomials using transition equations and then recover the BPD formula as a consequence.  The following combinatorial lemma shows the two definitions are equivalent.

\begin{lemma}\label{lem:bpdBij}
 Fix $w \in S_n$ and a lower outside corner $(a,b)$ of $D(w)$.   
Set $v = wt_{a, w^{-1}(b)}$.  There is a bijection 
\[\psi:\displaystyle \bpd(w) \rightarrow \bpd(v) \hspace{1mm} \cup   \bigcup_{u \in \Phi(w, z_{a,b})} \bpd(u)\]
that respects the diagrams of the bumpless pipe dreams.  Specifically, if $\psi(\mathcal B)\in \bpd(v)$, then $D(\mathcal B)=D(\psi(\mathcal B))\cup \{(a,b)\}$.  Otherwise, $D(\mathcal B)=D(\psi(\mathcal B))$.
\end{lemma}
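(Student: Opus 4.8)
The plan is to construct the bijection $\psi$ explicitly by inspecting the tile of $\mathcal B$ in the lower outside corner $(a,b)$ of $D(w)$. By Corollary~\ref{cor:BPDlowerOutsideCorner}, for any $\mathcal B \in \bpd(w)$ the tile in cell $(a,b)$ is either blank or an upward elbow $\begin{tikzpicture}[x=1em,y=1em]\draw[color=black, thick](0,1)rectangle(1,2);\draw[thick,rounded corners,color=blue] (.5,2)--(.5,1.5)--(0,1.5);\end{tikzpicture}$. These two cases will determine the target of $\psi(\mathcal B)$. First I would treat the \emph{blank} case: since $D(v) = D(w) \setminus \{(a,b)\}$ by the observation preceding Theorem~\ref{thm:transition}, one expects that placing a pipe through $(a,b)$ in the unique way compatible with the surrounding tiles (a crossing or a vertical/horizontal straight tile, determined by whether a pipe enters from the left and from below) produces a valid BPD whose permutation is exactly $v$; here $D(\mathcal B) = D(\psi(\mathcal B)) \sqcup \{(a,b)\}$. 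The content to check is that the resulting tiling still satisfies the three BPD axioms (in particular that no pair of pipes crosses twice) and that the terminal-row labelling reads off $v = wt_{a,w^{-1}(b)}$; this is a local computation since only cell $(a,b)$ changes.

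Next I would treat the \emph{upward elbow} case. Here the pipe $P$ through $(a,b)$ turns from vertical to leftward. Following $P$ upward, it must eventually make a leftward turn out of its column at some row; by the droop-move description of $\bpd(w)$ and the fact that $(a,b)$ is a lower outside corner, the relevant move is to "undroop" $P$ — i.e., reverse a droop — pulling the upward elbow at $(a,b)$ back to produce a downward elbow $\begin{tikzpicture}[x=1em,y=1em]\draw[color=black, thick](0,1)rectangle(1,2);\draw[thick,rounded corners,color=blue] (.5,1)--(.5,1.5)--(1,1.5);\end{tikzpicture}$ in some cell $(i,a)$ with $i < a$, converting the rectangle $[i,a]\times[\cdot,b]$ accordingly. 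The key claim is that this operation keeps $D(\mathcal B)$ unchanged (the blank cell set is preserved because we only reroute a single pipe within a region whose only elbows are at the corners), and that the new BPD lies in $\bpd(u)$ for $u = v t_{i,a} \in \Phi(w,z_{a,b})$ — equivalently, $i \in \phi(w,z_{a,b})$, so that $vt_{i,a} > v$ and $\ell(vt_{i,a}) = \ell(v)+1$. This is precisely where the combinatorial definition of $\phi(w,z_{a,b})$ gets matched to geometry: I would show the row $i$ of the undroop is constrained to be exactly the set of admissible rows, and conversely that every $u \in \Phi(w,z_{a,b})$ arises, by running the construction backwards (droop a pipe of a BPD of $u$ into corner $(a,b)$).

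To finish, I would check \textbf{injectivity and surjectivity}. Injectivity follows because the inverse is the forward droop/fill operation, which is determined by the target: from a BPD of $v$ one fills cell $(a,b)$; from a BPD of $u = vt_{i,a}$ one droops the appropriate pipe so that its new upward elbow lands in $(a,b)$. Well-definedness of the inverse on each $\bpd(u)$ requires that the pipe to be drooped and the destination rectangle are forced, which again uses that $(a,b)$ is a lower outside corner of $D(w)$ together with the defining length condition on $\phi$. The main obstacle I anticipate is the bookkeeping in the upward-elbow case: proving that the undroop is always legal (the rectangle $[i,a]\times[\text{col},b]$ has no interior elbows, so the move is reversible) and pinning down that the resulting permutation is $vt_{i,a}$ with the correct length — this is the step where one must carefully track how the single rerouted pipe changes the terminal-row reading word, and where the hypotheses of the lemma are actually consumed. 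Everything else is routine verification of the BPD axioms tile-by-tile. This combinatorics is exactly the bijective shadow of Theorem~\ref{thm:transition}: summing $\wt$ over the two cases recovers $(x_a - y_b)\mathfrak S_v + \sum_{u \in \Phi(w,z_{a,b})} \mathfrak S_u$, which is a useful consistency check but not logically needed here.
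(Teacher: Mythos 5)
Your overall architecture --- split into cases according to the tile at $(a,b)$ via Corollary~\ref{cor:BPDlowerOutsideCorner}, then apply a reversible move in each case --- is the same as the paper's, but both of the moves you actually describe are the wrong ones, and each fails for a concrete reason. In the blank-tile case you propose to ``place a pipe through $(a,b)$'' with the tile ``determined by whether a pipe enters from the left and from below,'' asserting that only cell $(a,b)$ changes. If $(a,b)$ is blank, no pipe enters it from any side (pipe segments must match across cell boundaries), so there is no way to make $(a,b)$ non-blank by a purely local change; and even granting some fill-in, nothing in your description explains why the resulting permutation is $v=wt_{a,w^{-1}(b)}$ rather than something else. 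The operation that actually works is global along two pipes: exchange the rows in which pipe $b$ and pipe $w(a)$ exit (this is what realizes the transposition $t_{a,w^{-1}(b)}$), rerouting them through the rectangle $[a,w^{-1}(b)]\times[b,w(a)]$; the cell $(a,b)$ then acquires a downward elbow, and several other cells in that rectangle change too. That this rerouting can be performed uniformly on every $\mathcal B\in\bpd(w)$ uses the fact that tiles strictly below the southeasternmost cells of $D(w)$ are constant across $\bpd(w)$, a consequence of droop-connectedness.

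The upward-elbow case is more seriously off. You propose to \emph{undroop} the pipe through $(a,b)$ and claim this keeps $D(\mathcal B)$ unchanged and lands in $\bpd(u)$ for some $u\in\Phi(w,y)$. Both claims fail: a droop (hence an undroop) moves a blank cell --- it trades $(a,b)\in D(\mathcal B)$ for the cell where the pipe's downward elbow sat --- so the diagram is \emph{not} preserved; and droop moves preserve the underlying permutation by construction (Proposition 3.3 is exactly the statement that $\bpd(w)$ is droop-connected), so undrooping returns another element of $\bpd(w)$, not an element of $\bpd(u)$. The correct move is again the exit-row exchange of pipes $b$ and $w(a)$, which in this case creates a bumping tile at $(a,b)$ (the existing upward elbow together with the newly routed downward elbow); it is the replacement of that bump by a crossing that changes the permutation to $u=vt_{i,a}\in\Phi(w,y)$ while leaving the diagram untouched. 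So the gap is genuine: the two moves at the heart of the bijection are misidentified, and the properties you need from them (diagram preserved versus shifted by $\{(a,b)\}$; target permutation $v$ versus $u\in\Phi(w,y)$) do not follow from the moves as you describe them.
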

\begin{proof}
The Rothe BPD for $v$ is obtained from the Rothe BPD for $w$ by exchanging the rows in which pipe $b$ and pipe $w(a)$ exit.  
Because $\bpd(w)$ is connected by droop moves, tiles strictly below the maximally southeast
cells of $D(w)$ are the same in every element of $\bpd(w)$.  Thus, we can make the same modification of pipes $b$ and $w(a)$ in every BPD for $w$.  By Corollary~\ref{cor:BPDlowerOutsideCorner}, we have two cases.

\noindent Case 1: $\mathcal B\in \bpd(w)$ has a blank tile in cell $(a,b)$.

In this case, exchanging the rows in which pipes $b$ and $w(a)$ exit produces an element of $\bpd(v)$.  Define $\psi(\mathcal B)$ to be this element. It is clear from construction that $D(\mathcal B)=D(\psi(\mathcal B))\cup\{(a,b)\}.$

\noindent Case 2: $\mathcal B\in \bpd(w)$ has a \begin{tikzpicture}[x=1em,y=1em]
			\draw[color=black, thick](0,1)rectangle(1,2);
			\draw[thick,rounded corners,color=blue] (.5,2)--(.5,1.5)--(0,1.5);
		\end{tikzpicture} tile in cell $(a,b)$.

Since $(a,b)$ is an upward elbow tile, exchanging the exit rows of pipes $b$ and $w(a)$ introduces a bumping tile.  Define $\psi(\mathcal B)$ to be the BPD obtained by replacing this bumping tile with a crossing tile.  By construction, $D(\mathcal B)=D(\psi(\mathcal B))$.  Furthermore, 
we claim that
$\psi(\mathcal B)$ is an element of 
$ \bigcup\{ \bpd(u):u \in \Phi(w,z_{a,b})\}$.
		
That this map is well defined and bijective we leave as an exercise to the reader.  
The proof is nearly identical to the  arguments in \cite[Section~5]{Wei21}\footnote{The statement in \cite[Section~5]{Wei21} is K-theoretic; the cohomological version follows as a special case.}. 
 \end{proof}
 
 \begin{example}
 \label{example:transitionDroop}
Let $w=4721653$.  Fix the lower outside corner $(a,b)=(5,5)$ of $D(w)$.  The Rothe BPD of $w$ is pictured below with cell $(5,5)$ outlined.  Pictorially, we obtain the Rothe BPD for $v=wt_{5,6}=4721563$ by exchanging the rows in the Rothe BPD for $w$ in which pipe $b=5$ and pipe $w(a)=6$ exit.  See the green pipes below.
\[\begin{tikzpicture}[x=1.5em,y=1.5em]
\node at (3.5, 7.5) {$w=4721653$};
\draw[step=1,gray, thin] (0,0) grid (7,7);
\draw[step=1,black, thick] (4,2) grid (5,3);
\draw[color=black, thick](0,0)rectangle(7,7);
\draw[thick,rounded corners, color=orange] (.5,0)--(.5,3.5)--(7,3.5);
\draw[thick,rounded corners, color=red] (1.5,0)--(1.5,4.5)--(7,4.5);
\draw[thick,rounded corners, color=blue] (2.5,0)--(2.5,.5)--(7,.5);
\draw[thick,rounded corners, color=magenta] (3.5,0)--(3.5,6.5)--(7,6.5);
\draw[thick,rounded corners, color=ForestGreen] (4.5,0)--(4.5,1.5)--(7,1.5);
\draw[thick,rounded corners, color=ForestGreen] (5.5,0)--(5.5,2.5)--(7,2.5);
\draw[thick,rounded corners, color=blue] (6.5,0)--(6.5,5.5)--(7,5.5);
\end{tikzpicture}
\hspace{1em}
\raisebox{5em}{$\mapsto$}
\hspace{1em}
\begin{tikzpicture}[x=1.5em,y=1.5em]
\node at (3.5, 7.5) {$v=4721563$};
\draw[step=1,gray, thin] (0,0) grid (7,7);
\draw[step=1,black, thick] (4,2) grid (5,3);
\draw[color=black, thick](0,0)rectangle(7,7);
\draw[thick,rounded corners, color=orange] (.5,0)--(.5,3.5)--(7,3.5);
\draw[thick,rounded corners, color=red] (1.5,0)--(1.5,4.5)--(7,4.5);
\draw[thick,rounded corners, color=blue] (2.5,0)--(2.5,.5)--(7,.5);
\draw[thick,rounded corners, color=magenta] (3.5,0)--(3.5,6.5)--(7,6.5);
\draw[thick,rounded corners, color=ForestGreen] (4.5,0)--(4.5,2.5)--(7,2.5);
\draw[thick,rounded corners, color=ForestGreen] (5.5,0)--(5.5,1.5)--(7,1.5);
\draw[thick,rounded corners, color=blue] (6.5,0)--(6.5,5.5)--(7,5.5);
\end{tikzpicture}
\]
There are three pipes in the Rothe BPD for $w$ that are able to droop into $(5,5)$: the magenta, red, and orange pipes.  Each of these droops is pictured below.
\[\begin{tikzpicture}[x=1.5em,y=1.5em]
\draw[step=1,gray, thin] (0,0) grid (7,7);
\draw[step=1,black, thick] (4,2) grid (5,3);
\draw[color=black, thick](0,0)rectangle(7,7);
\draw[thick,rounded corners, color=blue] (.5,0)--(.5,3.5)--(7,3.5);
\draw[thick,rounded corners, color=blue] (1.5,0)--(1.5,4.5)--(7,4.5);
\draw[thick,rounded corners, color=blue] (2.5,0)--(2.5,.5)--(7,.5);
\draw[thick,rounded corners, color=magenta] (3.5,0)--(3.5,2.5)--(4.5,2.5)--(4.5,6.5)--(7,6.5);
\draw[thick,rounded corners, color=ForestGreen] (4.5,0)--(4.5,1.5)--(7,1.5);
\draw[thick,rounded corners, color=ForestGreen] (5.5,0)--(5.5,2.5)--(7,2.5);
\draw[thick,rounded corners, color=blue] (6.5,0)--(6.5,5.5)--(7,5.5);
\end{tikzpicture} \hspace{2em}
\begin{tikzpicture}[x=1.5em,y=1.5em]
\draw[step=1,gray, thin] (0,0) grid (7,7);
\draw[step=1,black, thick] (4,2) grid (5,3);
\draw[color=black, thick](0,0)rectangle(7,7);
\draw[thick,rounded corners, color=blue] (.5,0)--(.5,3.5)--(7,3.5);
\draw[thick,rounded corners, color=red] (1.5,0)--(1.5,2.5)--(4.5,2.5)--(4.5,4.5)--(7,4.5);
\draw[thick,rounded corners, color=blue] (2.5,0)--(2.5,.5)--(7,.5);
\draw[thick,rounded corners, color=blue] (3.5,0)--(3.5,6.5)--(7,6.5);
\draw[thick,rounded corners, color=ForestGreen] (4.5,0)--(4.5,1.5)--(7,1.5);
\draw[thick,rounded corners, color=ForestGreen] (5.5,0)--(5.5,2.5)--(7,2.5);
\draw[thick,rounded corners, color=blue] (6.5,0)--(6.5,5.5)--(7,5.5);
\end{tikzpicture}  \hspace{2em}
\begin{tikzpicture}[x=1.5em,y=1.5em]
\draw[step=1,gray, thin] (0,0) grid (7,7);
\draw[step=1,black, thick] (4,2) grid (5,3);
\draw[color=black, thick](0,0)rectangle(7,7);
\draw[thick,rounded corners, color=orange] (.5,0)--(.5,2.5)--(4.5,2.5)--(4.5,3.5)--(7,3.5);
\draw[thick,rounded corners, color=blue] (1.5,0)--(1.5,4.5)--(7,4.5);
\draw[thick,rounded corners, color=blue] (2.5,0)--(2.5,.5)--(7,.5);
\draw[thick,rounded corners, color=blue] (3.5,0)--(3.5,6.5)--(7,6.5);
\draw[thick,rounded corners, color=ForestGreen] (4.5,0)--(4.5,1.5)--(7,1.5);
\draw[thick,rounded corners, color=ForestGreen] (5.5,0)--(5.5,2.5)--(7,2.5);
\draw[thick,rounded corners, color=blue] (6.5,0)--(6.5,5.5)--(7,5.5);
\end{tikzpicture}
\]
Taking these BPDs, again switch the rows in which the 5th and 6th pipes exit.  This produces diagrams that are \emph{not} BPDs, as they each have a bumping tile in cell $(5,5)$.
\[\begin{tikzpicture}[x=1.5em,y=1.5em]
\draw[step=1,gray, thin] (0,0) grid (7,7);
\draw[step=1,black, thick] (4,2) grid (5,3);
\draw[color=black, thick](0,0)rectangle(7,7);
\draw[thick,rounded corners, color=blue] (.5,0)--(.5,3.5)--(7,3.5);
\draw[thick,rounded corners, color=blue] (1.5,0)--(1.5,4.5)--(7,4.5);
\draw[thick,rounded corners, color=blue] (2.5,0)--(2.5,.5)--(7,.5);
\draw[thick,rounded corners, color=magenta] (3.5,0)--(3.5,2.5)--(4.5,2.5)--(4.5,6.5)--(7,6.5);
\draw[thick,rounded corners, color=ForestGreen] (4.5,0)--(4.5,2.5)--(7,2.5);
\draw[thick,rounded corners, color=ForestGreen] (5.5,0)--(5.5,1.5)--(7,1.5);
\draw[thick,rounded corners, color=blue] (6.5,0)--(6.5,5.5)--(7,5.5);
\end{tikzpicture} \hspace{2em}
\begin{tikzpicture}[x=1.5em,y=1.5em]
\draw[step=1,gray, thin] (0,0) grid (7,7);
\draw[step=1,black, thick] (4,2) grid (5,3);
\draw[color=black, thick](0,0)rectangle(7,7);
\draw[thick,rounded corners, color=blue] (.5,0)--(.5,3.5)--(7,3.5);
\draw[thick,rounded corners, color=red] (1.5,0)--(1.5,2.5)--(4.5,2.5)--(4.5,4.5)--(7,4.5);
\draw[thick,rounded corners, color=blue] (2.5,0)--(2.5,.5)--(7,.5);
\draw[thick,rounded corners, color=blue] (3.5,0)--(3.5,6.5)--(7,6.5);
\draw[thick,rounded corners, color=ForestGreen] (4.5,0)--(4.5,2.5)--(7,2.5);
\draw[thick,rounded corners, color=ForestGreen] (5.5,0)--(5.5,1.5)--(7,1.5);
\draw[thick,rounded corners, color=blue] (6.5,0)--(6.5,5.5)--(7,5.5);
\end{tikzpicture}
 \hspace{2em}
\begin{tikzpicture}[x=1.5em,y=1.5em]
\draw[step=1,gray, thin] (0,0) grid (7,7);
\draw[step=1,black, thick] (4,2) grid (5,3);
\draw[color=black, thick](0,0)rectangle(7,7);
\draw[thick,rounded corners, color=orange] (.5,0)--(.5,2.5)--(4.5,2.5)--(4.5,3.5)--(7,3.5);
\draw[thick,rounded corners, color=blue] (1.5,0)--(1.5,4.5)--(7,4.5);
\draw[thick,rounded corners, color=blue] (2.5,0)--(2.5,.5)--(7,.5);
\draw[thick,rounded corners, color=blue] (3.5,0)--(3.5,6.5)--(7,6.5);
\draw[thick,rounded corners, color=ForestGreen] (4.5,0)--(4.5,2.5)--(7,2.5);
\draw[thick,rounded corners, color=ForestGreen] (5.5,0)--(5.5,1.5)--(7,1.5);
\draw[thick,rounded corners, color=blue] (6.5,0)--(6.5,5.5)--(7,5.5);
\end{tikzpicture}
\]
We resolve this issue by replacing each bumping tile with a crossing tile.  This produces the Rothe BPDs for each permutation in $\Phi(w,z_{55}) = \{5721463, 4751263, 4725163\}$. 
\[\begin{tikzpicture}[x=1.5em,y=1.5em]
\node at (3.5, 7.5) {$u_1=vt_{1,5}=5721463$};
\draw[step=1,gray, thin] (0,0) grid (7,7);
\draw[step=1,black, thick] (4,2) grid (5,3);
\draw[color=black, thick](0,0)rectangle(7,7);
\draw[thick,rounded corners, color=blue] (.5,0)--(.5,3.5)--(7,3.5);
\draw[thick,rounded corners, color=blue] (1.5,0)--(1.5,4.5)--(7,4.5);
\draw[thick,rounded corners, color=blue] (2.5,0)--(2.5,.5)--(7,.5);
\draw[thick,rounded corners, color=blue] (3.5,0)--(3.5,2.5)--(7,2.5);
\draw[thick,rounded corners, color=blue] (4.5,0)--(4.5,6.5)--(7,6.5);
\draw[thick,rounded corners, color=blue] (5.5,0)--(5.5,1.5)--(7,1.5);
\draw[thick,rounded corners, color=blue] (6.5,0)--(6.5,5.5)--(7,5.5);
\end{tikzpicture}
\hspace{2em}
\begin{tikzpicture}[x=1.5em,y=1.5em]
\node at (3.5, 7.5) {$u_2=vt_{3,5}=4751263$};
\draw[step=1,gray, thin] (0,0) grid (7,7);
\draw[step=1,black, thick] (4,2) grid (5,3);
\draw[color=black, thick](0,0)rectangle(7,7);
\draw[thick,rounded corners, color=blue] (.5,0)--(.5,3.5)--(7,3.5);
\draw[thick,rounded corners, color=blue] (1.5,0)--(1.5,2.5)--(7,2.5);
\draw[thick,rounded corners, color=blue] (2.5,0)--(2.5,.5)--(7,.5);
\draw[thick,rounded corners, color=blue] (3.5,0)--(3.5,6.5)--(7,6.5);
\draw[thick,rounded corners, color=blue] (4.5,0)--(4.5,4.5)--(7,4.5);
\draw[thick,rounded corners, color=blue] (5.5,0)--(5.5,1.5)--(7,1.5);
\draw[thick,rounded corners, color=blue] (6.5,0)--(6.5,5.5)--(7,5.5);
\end{tikzpicture}
\hspace{2em}
\begin{tikzpicture}[x=1.5em,y=1.5em]
\node at (3.5, 7.5) {$u_3=vt_{4,5}=4725163$};
\draw[step=1,gray, thin] (0,0) grid (7,7);
\draw[step=1,black, thick] (4,2) grid (5,3);
\draw[color=black, thick](0,0)rectangle(7,7);
\draw[thick,rounded corners, color=blue] (.5,0)--(.5,2.5)--(7,2.5);
\draw[thick,rounded corners, color=blue] (1.5,0)--(1.5,4.5)--(7,4.5);
\draw[thick,rounded corners, color=blue] (2.5,0)--(2.5,.5)--(7,.5);
\draw[thick,rounded corners, color=blue] (3.5,0)--(3.5,6.5)--(7,6.5);
\draw[thick,rounded corners, color=blue] (4.5,0)--(4.5,3.5)--(7,3.5);
\draw[thick,rounded corners, color=blue] (5.5,0)--(5.5,1.5)--(7,1.5);
\draw[thick,rounded corners, color=blue] (6.5,0)--(6.5,5.5)--(7,5.5);
\end{tikzpicture}
\]
Thus, \[\mathfrak S_w(\mathbf x, \mathbf y)=(x_5-y_5)\cdot \mathfrak S_v(\mathbf x, \mathbf y)+\mathfrak S_{u_1}(\mathbf x, \mathbf y)+\mathfrak S_{u_2}(\mathbf x, \mathbf y)+\mathfrak S_{u_3}(\mathbf x, \mathbf y),\] where $u_1=5721463$, $u_2=4751263$, and $u_3=4725163$.
\end{example}

In Section~\ref{section:additionalConsequences}, we give a proof of Theorem~\ref{thm:transition} based on a geometric recurrence we develop in Sections \ref{section:GVD} and \ref{section:proofs}.

\section{Bumpless pipe dreams and geometric vertex decomposition}\label{section:GVD}

Geometric vertex decomposition, which was introduced by Knutson, Miller, and Yong \cite{KMY09} in the study of vexillary matrix Schubert varieties, will be one of our main tools for understanding Schubert determinantal ideals in the context of BPDs.  We will use it to obtain an algebro-geometric recurrence that mirrors the combinatorial recurrences we have seen in BPDs and transition.  Throughout this section, we will take $R = \kappa[z_{1,1},\dots, z_{n,n}]$ and assume that all ideals are ideals of $R$ unless otherwise stated.

Fix one of the algebra generators $z_{a,b}$ of $R$, and set $y = z_{a,b}$.   For a polynomial $f =  \sum_{i = 0}^m \alpha_i y^i \in R$ with each $\alpha_i \in \kappa[z_{1,1},\dots,\widehat{y},\dots, z_{n,n}]$ and $\alpha_m \neq 0$, define the \mydef{initial $y$-form of $f$} to be $\init_y (f) =\alpha_m y^m$.  Given an ideal $I$, define $\init_y (I)$ to be the ideal generated by the initial $y$-forms of $I$; that is, $\init_y (I) = ( \init_y (f) : f \in I )$.   We say that a term order $\sigma$ on $R$ is \mydef{$y$-compatible} if it satisfies $\init_\sigma (f) = \init_\sigma(\init_y (f))$  for every $f\in R$. Notice that whenever a term order $\sigma$ is $y$-compatible, $\init_\sigma(I) = \init_\sigma(\init_y(I))$.  Important examples of $y$-compatible term orders are lexicographic term orders in which $y$ is the largest variable.  

\begin{defn}\label{def:gvd}\cite[Section 2.1]{KMY09}
Suppose that $I$ is an ideal of a polynomial ring that is equipped with  a $y$-compatible term order $\sigma$ and that $I$ has a Gr\"obner basis of the form $\mathcal{G} = \{yq_1+r_1, \ldots, yq_k+r_k, h_1, \ldots, h_\ell\}$ where $y$ does not divide any $q_i$, $r_i$, or $h_i$.  We define $C_{y,I} = (q_1, \ldots, q_k, h_1, \ldots, h_\ell)$ and $
N_{y,I} = (h_1, \ldots, h_\ell)$.  
Then $\init_y (I) = C_{y,I} \cap (N_{y,I}+(y))$, and this decomposition is called a \mydef{geometric vertex decomposition of $I$ with respect to $y$}.  
\end{defn}

Throughout this paper, for an ideal $I$ and variable $y$, we will only use the notation $C_{y,I}$ and $N_{y,I}$ to denote the ideals constructed as in Definition \ref{def:gvd}.

It follows from \cite[Theorem 2.1]{KMY09} that the ideals $C_{y,I}$ and $N_{y,I}$ do not depend on the choice of $y$-compatible term order and can be computed from any Gr\"obner basis for any such term order (rather than requiring knowledge of the reduced Gr\"obner basis).  If $y \in I$, then $C_{y,I} = R$ and $N_{y,I}+(y) = I$.  We will be performing geometric vertex decompositions only  when $\spec (R/I)$ is reduced and equidimensional.  In this case and when $y \notin I$, then, by \cite[Proposition 2.4]{KR}, either $N_{y,I} = C_{y,I}$, in which case $I$ has a generating set that does not involve $y$, or, by  \cite[Lemma 2.8]{KR}, $\spec (R/C_{y,I})$ is also equidimensional, and \[
\dim(\spec (R/I) )= \dim(\spec (R/C_{y,I})) = \dim(\spec (R/N_{y,I}))-1.
\]  For more information on geometric vertex decomposition, we refer the reader to \cite{KMY09, KR}. 

When an ideal $I$ has a generating set of the form $\{yq_1+r_1, \ldots, yq_k+r_k, h_1, \ldots, h_\ell\}$ where $y$ does not divide any $q_i$, $r_i$, or $h_i$, we will say that $I$ is \mydef{linear in $y$}.  Note that, if $I$ is linear in $y$, then the reduced Gr\"obner basis $\mathcal{G}$ of $I$ with respect to any $y$-compatible term order $\sigma$ will satisfy $\init_\sigma(g) \notin (y^2)$ for all $g \in \mathcal{G}$.  Hence, to perform a geometric vertex decomposition of an ideal $I$ with respect to $y$, it suffices to know that $I$ is linear in $y$.

We begin with a lemma that we will use to understand the geometric vertex decomposition of the Schubert determinantal ideal $I_w$ at a lower outside corner of $D(w)$.

\begin{lemma}\label{lem:NisSchub}
Fix $w \in S_n$ and a lower outside corner $(a,b)$ of $D(w)$.
 Write the Fulton generators of $I_w$ as $\{z_{a,b}q_1+r_1, \ldots, z_{a,b}q_k+r_k, h_1, \ldots, h_\ell\}$, where $z_{a,b}$ does not divide any $q_i, r_i,$ or $h_j$.  If $N=(h_1, \ldots, h_\ell)$, then $N$ is the Schubert determinantal ideal $I_v$ for $v=wt_{a,w^{-1}(b)}$.  
 The Fulton generators  of $I_v$ form a subset  of $\{h_1,\ldots,h_\ell\}$.
\end{lemma}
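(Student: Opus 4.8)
The plan is to analyze the Fulton generators of $I_w$ at the lower outside corner $(a,b)$ and sort them by whether or not they involve $y = z_{a,b}$, then identify the $y$-free ones as exactly the Fulton generators of $I_v$. First I would recall that $D(w) = D(v) \sqcup \{(a,b)\}$ and $\ell(w) = \ell(v)+1$, so $(a,b)$ is a lower outside corner of a connected component of $D(w)$ whose removal yields $D(v)$; crucially, $\ess(w)$ and $\ess(v)$ differ only at essential cells lying weakly northwest of $(a,b)$ and weakly inside the component containing $(a,b)$. The key quantitative input is to understand how the rank function changes: since $v = w t_{a, w^{-1}(b)}$, we have $\rk_v(i,j) = \rk_w(i,j)$ except in the rectangle with corners determined by rows $a$ and $w^{-1}(b)$ and columns $b$ and $w(a)$, and in particular at the relevant essential cells the rank bound increases by exactly one precisely at positions that ``see'' cell $(a,b)$.

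The main step is the following dichotomy among Fulton generators of $I_w$. A Fulton generator of $I_w$ comes from an essential cell $(p,q) \in \ess(w)$ and is a $(\rk_w(p,q)+1)$-minor of $Z_{[p],[q]}$. If $(a,b)$ is not in $Z_{[p],[q]}$ (i.e.\ $p < a$ or $q < b$), then the minor does not involve $y$, and I claim such minors, ranging over the essential cells of $w$ weakly northwest of but not covering $(a,b)$, are precisely the Fulton generators coming from those essential cells of $v$ that agree with essential cells of $w$ — since for those cells the rank conditions of $w$ and $v$ coincide. The remaining essential cells, namely those $(p,q)$ with $p \geq a$ and $q \geq b$ (so that $Z_{[p],[q]}$ contains the entry $y$), contribute minors that do involve $y$; these are the $yq_i + r_i$. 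I then need to check the converse: that every Fulton generator of $I_v$ arises in this way as some $h_i$. For this I would argue that the essential set $\ess(v)$ differs from $\ess(w)$ only by possibly replacing the essential cell(s) of the component of $D(w)$ containing $(a,b)$ with new cells, all of which lie weakly northwest of $(a,b)$ — indeed removing $(a,b)$ can only shrink that component — so each new essential cell $(p,q)$ of $v$ satisfies $p < a$ or $q < b$ and has $\rk_v(p,q) = \rk_w(p,q)$ because $Z_{[p],[q]}$ avoids the rectangle where the rank dropped; hence the corresponding Fulton minor of $I_v$ is literally a $(\rk_w(p,q)+1)$-minor of $Z_{[p],[q]}$ not involving $y$, and I must exhibit it as one of the $h_i$. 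The subtle point is that $\ess(v)$ need not be a subset of $\ess(w)$, so I would instead show the weaker and sufficient statement: each Fulton generator of $I_v$ lies in the set $\{h_1,\ldots,h_\ell\}$, possibly by passing through the Fulton generators of the relevant sub-ideals and a Laplace-expansion comparison of $\rk_w$ versus $\rk_v$ on the sub-grids.

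The hard part will be the bookkeeping around how $\ess(w)$ and $\ess(v)$ relate when $(a,b)$ sits at the corner of a component of $D(w)$ that may or may not break into pieces upon deletion of $(a,b)$; in particular, verifying that no \emph{new} essential cell of $v$ can appear at a position $(p,q)$ with $p \geq a$ and $q \geq b$ (which would give a $y$-free generator missed by the $h_i$, or force an $h_i$ to be a minor of a grid containing $y$ — a contradiction) requires carefully using that $(a,b)$ is a \emph{lower outside} corner, i.e.\ maximally southeast in its component, together with Corollary~\ref{cor:BPDlowerOutsideCorner} or a direct combinatorial argument on the Rothe diagram. Once this structural comparison of essential sets and rank functions is in hand, the identification $N = I_v$ is a matter of observing that $(h_1,\ldots,h_\ell)$ is generated by exactly the Fulton minors attached to the essential cells of $v$, which is $I_v$ by Fulton's generation theorem cited in the excerpt, and the ``furthermore'' clause — that each Fulton generator of $I_v$ equals some $h_i$ — falls out of the same analysis since those generators are produced verbatim as the $y$-free Fulton minors of $I_w$.
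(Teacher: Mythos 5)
Your overall strategy --- compare $\ess(w)$ with $\ess(v)$ and $\rk_w$ with $\rk_v$, and match the $y$-free Fulton generators of $I_w$ with the Fulton generators of $I_v$ --- is the same as the paper's, but your central dichotomy is false, and the error propagates into both containments. Since $(a,b)$ is maximally southeast in $D(w)$, the only essential cell $(p,q)$ of $w$ with $p\geq a$ and $q\geq b$ is $(a,b)$ itself, and the Fulton generators attached to it are \emph{all} $(\rk_w(a,b)+1)$-minors of $Z_{[a],[b]}$. Those minors that omit row $a$ or column $b$ do not involve $y$ and are therefore among the $h_i$; your claim that the cell $(a,b)$ contributes only the $yq_i+r_i$ mis-sorts them. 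These $y$-free minors of $Z_{[a],[b]}$ are not incidental: they are exactly the generators that realize the rank conditions of $I_v$ at the new essential cells $(a-1,b)$ and $(a,b-1)$ created by deleting $(a,b)$ from $D(w)$. The identity that makes this work is $\rk_w(a,b)=\rk_w(a-1,b)=\rk_w(a,b-1)$ (because $(a,b)\in D(w)$) together with $\rk_v(a-1,b)=\rk_w(a-1,b)$ and $\rk_v(a,b-1)=\rk_w(a,b-1)$ (because $\rk_v$ and $\rk_w$ differ only on $[a,w^{-1}(b)-1]\times[b,w(a)-1]$, which contains neither $(a-1,b)$ nor $(a,b-1)$). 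Hence a $y$-free $(\rk_w(a,b)+1)$-minor of $Z_{[a],[b]}$ lies in $I_{\rk_v(a-1,b)+1}(Z_{[a-1],[b]})$ or $I_{\rk_v(a,b-1)+1}(Z_{[a],[b-1]})$, i.e.\ is a natural generator of $I_v$; and conversely every Fulton generator of $I_v$ at $(a-1,b)$ or $(a,b-1)$ is literally one of these minors, hence one of the $h_i$. Without this step you can close neither $N\subseteq I_v$ (you have $h_i$'s unaccounted for) nor $I_v\subseteq N$ and the ``furthermore'' clause (you cannot exhibit the generators at the new essential cells as $h_i$'s); you flag the latter as ``the hard part'' but the mechanism you gesture at is blocked by your own sorting of the generators.

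Two smaller points. First, the precise containment you need is $\ess(v)\subseteq(\ess(w)-\{(a,b)\})\cup\{(a-1,b),(a,b-1)\}$, which follows directly from $D(v)=D(w)-\{(a,b)\}$; you do not need, and should not appeal to, Corollary~\ref{cor:BPDlowerOutsideCorner}, which concerns tiles of bumpless pipe dreams rather than Rothe diagrams. Second, for essential cells $(c,d)\in\ess(w)-\{(a,b)\}$ the equality $\rk_w(c,d)=\rk_v(c,d)$ does hold, but the reason is again that $(a,b)$ is a lower outside corner, so no such $(c,d)$ lies in the rectangle $[a,w^{-1}(b)-1]\times[b,w(a)-1]$ where the two rank functions differ; this should be said explicitly rather than attributed to the cells ``not seeing'' $(a,b)$.
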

\begin{proof}
\noindent We first claim that  $N\subseteq I_v$.  We will show that each $h_j$ is a natural generator of $I_v$.
Because $h_j$ is a Fulton generator for $I_w$, we know $h_j\in I_{\rk_w(c,d)+1}(Z_{[c],[d]})$ for some $(c,d)\in \ess(w)$.  If $(c,d)=(a,b)$, since $h_j$ does not involve the variable $z_{a,b}$, we must have $h_j\in I_{\rk_w(c,d)+1}(Z_{[c-1],[d]})$ or $h_j\in I_{\rk_w(c,d)+1}(Z_{[c],[d-1]})$.

Because $wt_{a,w^{-1}(b)}=v$ and $w>v$, we have \[\rk_v(c,d)-\rk_{w}(c,d)=
\begin{cases}
1& \text{if } a\leq c<w^{-1}(b) \text{ and } b\leq d <w(a), \\
0& \text{otherwise.}
\end{cases}
\]  Since $(c,d)\in D(w)$, we have $\rk_w(c,d)=\rk_w(c-1,d)=\rk_w(c,d-1)$ (see, e.g., \cite[Lemmas 3.3 and 3.5]{Wei21}).  Thus, $\rk_w(c,d)=\rk_v(c-1,d)=\rk_v(c,d-1)$.  
Therefore, 
\[h_j\in I_{\rk_w(c,d)+1}(Z_{[c-1],[d]})=I_{\rk_v(c-1,d)+1}(Z_{[c-1],[d]})\]
or
\[h_j\in I_{\rk_w(c,d)+1}(Z_{[c],[d-1]})=I_{\rk_v(c,d-1)+1}(Z_{[c],[d-1]}).\]
In both cases, $h_j$ is a natural generator of $I_v$.

Now suppose $(c,d)\neq (a,b)$.  In this case, $\rk_w(c,d)=\rk_v(c,d)$, and so $h_j$ is a natural generator of $I_v$.  Since each $h_j$ is a natural generator of $I_v$, we conclude $N\subseteq I_v$.

We next claim that each Fulton generator for $I_v$ is an element of the set $\{h_1,\ldots, h_\ell\}$, from which it follows that $I_v\subseteq N$.  Because $D(v)=D(w)-\{(a,b)\}$, we have 
\[\ess(v)\subseteq (\ess(w)-\{(a,b)\})\cup \{(a-1,b),(a,b-1)\}.\]  Take a Fulton generator $h\in I_{\rk_v(c,d)+1}(Z_{[c],[d]})$ for some $(c,d) \in \ess(v)$.
If $(c,d)\in \ess(w)-\{(a,b)\}$, then, because $\rk_v(c,d)=\rk_w(c,d)$, $h$ is a Fulton generator of $I_w$.  
Otherwise, $(c,d)\in \{(a-1,b),(a,b-1)\}$.  In that case, $\rk_v(c,d)=\rk_w(c,d) = \rk_w(a,b)$, which implies that $h$ is a Fulton generator of $I_w$ that is among the natural generators of $I_{\rk_w(a,b)+1}(Z_{[a],[b]})$.  In both cases, $h$ is a Fulton generator of $I_w$ that does not involve the variable $z_{a,b}$, and so $h\in \{h_1,\ldots, h_\ell\}$.  Hence, $I_v\subseteq N$.

Thus, we conclude $I_v=N$.
\end{proof}

There is one uncommon family of term orders that will be of use to us throughout the remainder of this paper.  We now describe and name those term orders for later repeated use.
\begin{notation}[The term orders $\tau_{a,b}$]\label{not:tau}
 Consider the order on the variables appearing in the $n \times n$ matrix $Z = (z_{i,j})$ starting from the northeast corner of $Z$, then reading left across the top row, and then right to left along the second row, and so on.  We will call this right-to-left reading order.  Note that the lexicographic term order on right-to-left reading order is an anti-diagonal term order. 
 Throughout this paper, we will use  \mydef{$\tau_{a,b}$} to denote the lexicographic order in which $z_{a,b}$ is the largest variable and the remaining variables appearing in $Z$ are ordered in right-to-left reading order (skipping over $z_{a,b}$ as it would appear later in right-to-left reading order). 
\end{notation}

\begin{example}
Let $w=1243$.  Then 
\[I_w=\left(\begin{vmatrix} z_{11} & z_{12} & z_{13} \\
z_{21} & z_{22} & z_{23} \\
z_{31} & z_{32} & z_{33} 
 \end{vmatrix}\right).\]
 The cell $(3,3)$ is a lower outside corner of $D(w)$.  The term order $\tau_{3,3}$ is the lexicographic order with $z_{33}>z_{14}>z_{13}>z_{12}>z_{11}>z_{24}>z_{23}>z_{22}>z_{21}>z_{34}>z_{32}>z_{31}>z_{44}>z_{43}>z_{42}>z_{41}.$   In particular, \[\init_{\tau_{3,3}}\begin{vmatrix} z_{11} & z_{12} & z_{13} \\
z_{21} & z_{22} & z_{23} \\
z_{31} & z_{32} & z_{33} 
 \end{vmatrix}=z_{33}z_{12}z_{21}. \qedhere\] 
\end{example}

\begin{proposition}\label{prop:naturalGensGrobner}
Fix $w \in S_n$ and a lower outside corner $(a,b)$ of $D(w)$.  The Fulton generators of $I_w$ 
 form a Gr\"obner basis
with respect to $\tau_{a,b}$ (as in Notation \ref{not:tau}).
\end{proposition}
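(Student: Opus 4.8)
The plan is to show that the Fulton generators of $I_w$ form a Gr\"obner basis with respect to $\tau$ by comparing with the known anti-diagonal Gr\"obner basis result of Knutson--Miller (Theorem~\ref{theorem:KMTheoremB}), exploiting the fact that $\tau$ and right-to-left reading order $\sigma$ differ only in the position of the single variable $y = z_{a,b}$. The key observation is that since $(a,b)$ is a lower outside corner of $D(w)$, no Fulton generator of $I_w$ involving $y$ can have its $\sigma$-lead term divisible by $y$ in a ``higher'' way than expected: more precisely, each Fulton generator is a determinant of a submatrix $Z_{[c],[d]}$ with $(c,d) \in \ess(w)$, and the cell $(a,b)$, being a lower outside corner, can appear in such a submatrix only when $(c,d) = (a,b)$ itself or when $(c,d)$ lies weakly southeast of $(a,b)$; in the essential set, $(a,b)$ itself is the only such cell, so $y$ appears with degree at most $1$ in any Fulton generator, and in fact $I_w$ is linear in $y$ (which the paper will want anyway for the geometric vertex decomposition).

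First I would record that $I_w$ is linear in $y$: writing the Fulton generators as $\{yq_1+r_1,\ldots,yq_k+r_k,h_1,\ldots,h_\ell\}$ with $y$ dividing none of the $q_i,r_i,h_j$, this is possible precisely because $y$ occurs with degree $\le 1$ in each generator, which follows from $(a,b)$ being a lower outside corner (so $(a,b)$ lies in no $Z_{[c],[d]}$ for $(c,d)\in\ess(w)$ other than the case where a minor of $Z_{[a],[b]}$ uses row $a$ and column $b$, contributing $y$ linearly via Laplace expansion). Then I would invoke Buchberger's criterion: to check that a generating set is a Gr\"obner basis with respect to $\tau$, it suffices to check that all S-polynomials reduce to zero. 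The strategy is to relate $\tau$-reduction to $\sigma$-reduction. Since $\sigma$ is anti-diagonal, Theorem~\ref{theorem:KMTheoremB} tells us the Fulton generators are a Gr\"obner basis for $\sigma$, so all S-polynomials reduce to zero modulo the Fulton generators using $\sigma$-lead terms. The point is then that for each Fulton generator $g$, its $\tau$-lead term equals its $\sigma$-lead term unless $g$ involves $y$, in which case $\mathrm{in}_\tau(g) = y \cdot \mathrm{in}_\tau(q_i) $ where $\mathrm{in}_\sigma(g)$ is an anti-diagonal term of the full minor not involving... — here one must be careful.

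A cleaner route, and the one I would actually pursue, is to use the $y$-compatibility machinery already set up: the term order $\tau$ is by construction $y$-compatible (it makes $y$ the largest variable lexicographically, so $\mathrm{in}_\tau(f) = \mathrm{in}_\tau(\mathrm{in}_y(f))$ for all $f$). For a $y$-compatible order, a standard fact (essentially \cite[Theorem 2.1]{KMY09}, or a direct argument) is that if $\mathcal{G} = \{yq_1+r_1,\ldots,yq_k+r_k,h_1,\ldots,h_\ell\}$ generates $I$ and if both $\{q_1,\ldots,q_k,h_1,\ldots,h_\ell\}$ (generating $C_{y,I}$) and $\{h_1,\ldots,h_\ell\}$ (generating $N_{y,I}$) are Gr\"obner bases with respect to the restriction of $\tau$, then $\mathcal{G}$ is a Gr\"obner basis for $I$ with respect to $\tau$. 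By Lemma~\ref{lem:NisSchub}, $N = (h_1,\ldots,h_\ell) = I_v$ with $v = wt_{a,w^{-1}(b)}$, and its Fulton generators are among the $h_j$; one then wants these to be Gr\"obner with respect to the relevant (anti-diagonal, $y$-free) order. Since $\tau$ restricted to the $y$-free variables is just right-to-left reading order, which is anti-diagonal, Theorem~\ref{theorem:KMTheoremB} applies to $I_v$ directly: its Fulton generators are Gr\"obner. Handling $C_{y,I}$ is the analogous and slightly more delicate piece — one needs that $\{q_1,\ldots,q_k,h_1,\ldots,h_\ell\}$ is Gr\"obner, which should follow from identifying $C_{y,I}$ (or an appropriate ideal containing it with the same initial ideal) with the sum $I_v + (\text{cofactor minors})$ and again applying Knutson--Miller to the relevant matrix Schubert or ASM ideal.

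The main obstacle I anticipate is precisely the $C_{y,I}$ side: unlike $N_{y,I}$, which Lemma~\ref{lem:NisSchub} cleanly identifies as a Schubert determinantal ideal, the contraction ideal $C_{y,I}$ (generated by the $q_i$'s together with the $h_j$'s) is not obviously the ideal of a matrix Schubert variety, and its Gr\"obner basis property with respect to the restricted order $\tau$ is not immediately handed to us. I expect the resolution to be either (a) a direct Buchberger S-pair computation leveraging the combinatorics of the minors of $Z_{[a],[b]}$ — showing the $q_i$ are themselves, up to sign, minors of $Z_{[a-1],[b-1]}$ (cofactors of $y$), so that $C_{y,I}$ is the ASM ideal $I_A$ for $A = \vee\{v, \pi\}$ where $\pi$ encodes the rank condition at $(a-1,b-1)$, whence Lemma~\ref{lemma:asmIdealFacts}(i) gives the needed Gr\"obner statement — or (b) deferring this identification to a later result and here proving only the weaker statement via a careful leading-term analysis: every S-polynomial among the Fulton generators has the same $\tau$-lead term behavior as under $\sigma$ because the only generators distinguishing the two orders are those involving $y$, and $\tau$ promotes $y$ to top, which can only make the lead term ``more divisible by $y$'' — and $\sigma$-reductions can be transported to $\tau$-reductions as long as we track the $y$-degree. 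I would structure the writeup around option (a) if the cofactor identification is clean, falling back to option (b) as a self-contained alternative.
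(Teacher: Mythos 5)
Your overall architecture matches the paper's: split the Fulton generators as $\{yq_1+r_1,\ldots,yq_k+r_k,h_1,\ldots,h_\ell\}$, identify $N=(h_1,\ldots,h_\ell)$ with $I_v$ via Lemma~\ref{lem:NisSchub}, identify $C=(q_1,\ldots,q_k,h_1,\ldots,h_\ell)$ with $I_{v\vee\pi}$ where $\pi$ is the bigrassmannian with essential cell $(a-1,b-1)$ and rank $\rk_w(a,b)-1$, and get the Gr\"obner property of the concatenated generating set for $C$ from Lemma~\ref{lemma:asmIdealFacts}\ref{lempart:intersectInit} together with Theorem~\ref{theorem:KMTheoremB} applied under the anti-diagonal restriction of $\tau$. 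Your option~(a) is exactly the right identification, and the paper does record it (Corollary~\ref{cor:formOfC}).

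The genuine gap is the ``standard fact'' you lean on to pass from Gr\"obner bases of $C$ and $N$ back to a Gr\"obner basis of $I_w$. That implication is \emph{not} \cite[Theorem~2.1]{KMY09}: that theorem goes in the opposite direction (it starts from a Gr\"obner basis of $I$ under a $y$-compatible order and deduces Gr\"obner bases of $C_{y,I}$ and $N_{y,I}$). The converse is \cite[Corollary~4.13]{KR}, and it is not automatic; it carries hypotheses that must be checked, the crucial and least obvious one being that $q_ir_j-q_jr_i\in N$ for every pair $i,j$ (together with height conditions, e.g.\ that $\hgt C>\hgt N$ and $N$ is unmixed, which here follow from $N=I_v$ being prime of height $\ell(w)-1$). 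The paper's proof spends a dedicated paragraph verifying the syzygy condition: one writes $q_ir_j-q_jr_i=(yq_j+r_j)q_i-(yq_i+r_i)q_j\in J$ for $J=I_{\rk_w(a,b)+1}(Z_{[a],[b]})$, chooses a diagonal order with $y$ largest so that the natural generators of $J$ are Gr\"obner, and uses elimination (the element involves no $y$, so it reduces against the $y$-free generators of $J$, which are natural generators of $N$). Without this step your argument does not close: Gr\"obner-ness of the pieces genuinely can fail to lift. Your fallback option~(b), transporting anti-diagonal $S$-pair reductions to $\tau$, is also not routine, because $\init_\sigma(yq_i+r_i)$ for the anti-diagonal order $\sigma$ need not be divisible by $y$ at all, so the lead terms under $\sigma$ and $\tau$ of the same generator can be entirely different monomials and the $\sigma$-reductions do not transfer. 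Separately, you should handle $\rk_w(a,b)=0$ (where $y\in I_w$ and $\pi$ does not exist) as its own easy case, as the paper does.
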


\begin{proof}
Write $I_w = (z_{a,b}q_1+r_1, \ldots, z_{a,b}q_k+r_k, h_1, \ldots, h_\ell)$, where $z_{a,b}$ does not divide any term of any of the $q_i, r_i$, or $h_i$ and the given generators are the Fulton generators.  If $\rk_w(a,b) = 0$, then $z_{a,b} \in I_w$, and the result is obvious 
 as the Fulton generators form an anti-diagonal Gr\"obner basis \cite[Theorem~B]{KM05}.  
 When
$\rk_w(a,b)\geq 1$, we will use \cite[Corollary 4.13]{KR}. 

Set $C = (q_1, \ldots, q_k, h_1, \ldots, h_\ell)$ and $N = (h_1, \ldots, h_\ell)$.  (Recall from Definition \ref{def:gvd} that, once we have shown that the generators named above for $I_w$ form a Gr\"obner basis under a $z_{a,b}$-compatible term order, then we will have shown $C = C_{z_{a,b},I_w}$ and $N = N_{z_{a,b},I_w}$.  We have named $C$ and $N$ in anticipation of this result.)

By Lemma \ref{lem:NisSchub}, $N$ is the Schubert determinantal ideal of the permutation $v = wt_{a, w^{-1}(b)}$, and the given generators of $N = I_v$ contain the Fulton generators. Because $N$ does not involve $z_{a,b}$, $\tau_{a,b}$ restricts to the lexicographic term order on right-to-left reading order on the variables involved in some Fulton generator of $N$.  Because the lexicographic term order on right-to-left reading order is an anti-diagonal order, the Fulton generators of $N$
 form a Gr\"obner basis
under $\tau_{a,b}$ \cite[Theorem~B]{KM05}; hence, so too does the set $\{h_1, \ldots, h_\ell\}$.  

Let $\pi$ be the bigrassmannian permutation whose unique essential cell is $(a-1,b-1)$ with rank condition $\rk_\pi(a-1,b-1) = \rk_w(a,b)-1$. Let $H$ denote the subset of $\{h_1, \ldots, h_\ell\}$ consisting of the $h_i$ that are also Fulton generators of $I_\pi$.  Then $\{q_1, \ldots, q_k\} \cup H$ is the set of Fulton generators of $I_\pi$, which form an anti-diagonal Gr\"obner basis.  Now $C = I_\pi + I_v=I_{\pi \vee v}$ by Lemma~\ref{lemma:asmIdealFacts}\ref{lempart:addIwi}.   In particular, the generating set
$\{q_1, \ldots, q_k, h_1, \ldots, h_\ell\}$ for $C$ is the concatenation of a Gr\"obner basis for $I_v$ and a Gr\"obner basis for $I_\pi$ under $\tau_{a,b}$.   Since $C$, $I_v$, and $I_\pi$ all have generating sets that do not involve $z_{a,b}$, we may apply Lemma~\ref{lemma:asmIdealFacts}\ref{lempart:intersectInit}, which tells us $\init_{\tau_{a,b}}(C)=\init_{\tau_{a,b}}(I_v)+\init_{\tau_{a,b}}(I_\pi)$.  Thus, $\{q_1, \ldots, q_k, h_1, \ldots, h_\ell\}$ is a Gr\"obner basis for $C$ under $\tau_{a,b}$.

Because $\ell(v) = \ell(w)-1$, the height of $N$ is one less than the height of $I_w$.  Because $N$ is a Schubert determinantal ideal, it is prime and so, in particular, height unmixed, (i.e., all associated primes of $N$ have the same height).
Because $C$ properly contains the prime ideal $N$, the height of $C$ is strictly greater than the height of $N$.  Because $\tau_{a,b}$ is a lexicographic order with $z_{a,b}$ 
the largest variable, $\init_{\tau_{a,b}}(z_{a,b}q_i+r_i) = z_{a,b} \cdot \init_{\tau_{a,b}}(q_i)$ for all $i \in [k]$.  

Finally, we claim that $q_i r_j-q_jr_i \in N$ for all $i,j \in [k] \times [k]$.  Fix a diagonal term order $\sigma$ with $z_{a,b}$ largest among variables appearing in at least one Fulton generator of $I_w$.  Consider the ideal $J =I_{\rk_w(a,b)+1}(Z_{[a],[b]})$, for which the natural generators form a diagonal Gr\"obner basis.  Then \[
q_i r_j-q_jr_i = (z_{a,b}q_j+r_j)q_i-(z_{a,b}q_i+r_i)q_j \in J.
\]
Because $q_i r_j-q_jr_i$ does not involve $z_{a,b}$, which is the lexicographically  largest variable, 
it must have remainder $0$ on division by the generators of $J$ that do not involve $z_{a,b}$, each of which is one of the natural generators of $N$.  Hence, $q_i r_j-q_jr_i \in N$ for all $i,j \in [k] \times [k]$, and so  the result follows from \cite[Corollary 4.13]{KR}.
\end{proof}

In the proof of Proposition \ref{prop:naturalGensGrobner}, one may alternatively use the Frobenius splitting described in \cite[Section~7.2]{Knu09} to show that the given generators of $C$
form a Gr\"obner basis 
under $\tau_{a,b}$.

With notation and assumptions as in Proposition \ref{prop:naturalGensGrobner}, we are now entitled to write $C = C_{z_{a,b},I_w}$ and $N = N_{z_{a,b},I_w} = I_v$. For later convenience, we record as a corollary the relationship between $C_{z_{a,b},I_w}$, $I_v$, and $I_\pi$ discussed in the proof of Proposition \ref{prop:naturalGensGrobner}.

\begin{corollary}\label{cor:formOfC}
Fix $w \in S_n$ and a lower outside corner $(a,b)$ of $D(w)$.  Assume $\rk_w(a,b)\geq 1$. 
Write $v = wt_{a, w^{-1}(b)}$, and let $\pi$ be the bigrassmannian permutation so that $\ess(\pi)=\{(a-1,b-1)\}$ and $\rk_\pi(a-1,b-1)=\rk_w(a,b)-1$.   Then $C_{z_{a,b},I_w} = I_v+I_\pi = I_{v \vee \pi}$ and $\init_{z_{a,b}} (I_w) = I_{v \vee \pi} \cap (I_v+(z_{a,b}))$.
\end{corollary}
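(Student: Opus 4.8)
The plan is to observe that this corollary merely repackages the computation already carried out inside the proof of Proposition~\ref{prop:naturalGensGrobner}, together with the definition of geometric vertex decomposition. First I would note that the term order $\tau$ of Notation~\ref{not:tau} is $y$-compatible: since $\tau$ is lexicographic with $y$ the greatest variable, for any $f=\sum_i\alpha_i y^i$ with $y$ dividing none of the $\alpha_i$ the $\tau$-leading term of $f$ is picked out by the highest occurring power of $y$, so $\init_\tau(f)=\init_\tau(\alpha_m y^m)=\init_\tau(\init_y(f))$. By Proposition~\ref{prop:naturalGensGrobner}, writing the Fulton generators of $I_w$ as $\{yq_1+r_1,\dots,yq_k+r_k,h_1,\dots,h_\ell\}$ with $y$ dividing none of the $q_i,r_i,h_i$, this set is a Gr\"obner basis of $I_w$ with respect to the $y$-compatible order $\tau$ and has exactly the shape required by Definition~\ref{def:gvd}.

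Next I would invoke the remark following Definition~\ref{def:gvd} (based on \cite[Theorem~2.1]{KMY09}): the ideals $C_{y,I_w}$ and $N_{y,I_w}$ can be read off from any Gr\"obner basis for any $y$-compatible term order, so $C_{y,I_w}=(q_1,\dots,q_k,h_1,\dots,h_\ell)$ and $N_{y,I_w}=(h_1,\dots,h_\ell)$. The two asserted identities now follow from what was established in the proof of Proposition~\ref{prop:naturalGensGrobner}. On the one hand, Lemma~\ref{lem:NisSchub} gives $N_{y,I_w}=(h_1,\dots,h_\ell)=I_v$. On the other hand, since $\rk_w(a,b)\geq 1$ the bigrassmannian $\pi$ is well defined, and the proof of Proposition~\ref{prop:naturalGensGrobner} shows that $\{q_1,\dots,q_k\}$ together with the subset $H\subseteq\{h_1,\dots,h_\ell\}$ of those $h_i$ lying in $I_\pi$ constitute the full set of Fulton generators of $I_\pi$; hence $C_{y,I_w}=(q_1,\dots,q_k,h_1,\dots,h_\ell)=I_v+I_\pi$, and Lemma~\ref{lemma:asmIdealFacts}\ref{lempart:addIwi} applied with $A=v\vee\pi$ gives $I_v+I_\pi=I_{v\vee\pi}$.

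Finally I would substitute these into the geometric-vertex-decomposition formula $\init_y(I_w)=C_{y,I_w}\cap(N_{y,I_w}+(y))$ of Definition~\ref{def:gvd} to obtain $\init_y(I_w)=I_{v\vee\pi}\cap(I_v+(y))$, which is the second claim. Because the genuinely substantive work---the Gr\"obner property of the Fulton generators with respect to $\tau$, the identification $N_{y,I_w}=I_v$, and the decomposition $C_{y,I_w}=I_v+I_\pi$---has already been accomplished in Lemma~\ref{lem:NisSchub} and Proposition~\ref{prop:naturalGensGrobner}, there is no real obstacle left; the only care required is bookkeeping: checking that $\rk_w(a,b)\geq 1$ is precisely the hypothesis that makes $\pi$ and hence the statement meaningful, and confirming the $y$-compatibility of $\tau$ so that Definition~\ref{def:gvd} applies with the Fulton generators as the input Gr\"obner basis.
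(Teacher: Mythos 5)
Your proposal is correct and follows the paper's route exactly: the paper offers no separate argument for this corollary, stating only that it records the relationship between $C_{y,I_w}$, $I_v$, and $I_\pi$ already established in the proof of Proposition~\ref{prop:naturalGensGrobner}, which is precisely the bookkeeping you carry out (the $y$-compatibility of $\tau$, reading off $C_{y,I_w}$ and $N_{y,I_w}$ from the Fulton-generator Gr\"obner basis, Lemma~\ref{lem:NisSchub} for $N_{y,I_w}=I_v$, and Lemma~\ref{lemma:asmIdealFacts}\ref{lempart:addIwi} for $I_v+I_\pi=I_{v\vee\pi}$).
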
 

We now give a combinatorial lemma that allows us to use Corollary \ref{cor:formOfC} to identify the associated primes of $C_{z_{a,b},I_w}$ if $w \in S_n$, $(a,b)$ is a lower outside corner of $D(w)$, and $\rk_w(a,b)\geq 1$.

 \begin{proposition}
\label{prop:ASMdecomp}
Fix $w \in S_n$ and a lower outside corner $(a,b)$ of $D(w)$.  
 Assume $\rk_w(a,b)\geq 1$.
 Set $v=wt_{a,w^{-1}(b)}$, and let $\pi$ be the bigrassmannian permutation so that $\ess(\pi)=\{(a-1,b-1)\}$ and $\rk_\pi(a-1,b-1)=\rk_w(a,b)-1$. 
	Then $\perm(v \vee \pi)=\Phi(w,z_{a,b})$ and $\hgt(v \vee \pi)=\ell(w)$.
\end{proposition}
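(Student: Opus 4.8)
The plan is to determine $\perm(v\vee\pi)$ by a Bruhat-order analysis concentrated at the cell $(a-1,b-1)$, invoking the geometric vertex decomposition machinery only to supply equidimensionality. First I would record the local data. Put $r=w^{-1}(b)$; since $(a,b)\in D(w)$ we have $r>a$ and $v=wt_{a,r}$, so $v(i)=w(i)$ for $i<a$, $v(a)=b$, and no $v(i)$ with $i<a$ equals $b$. Writing $\rho=\rk_w(a,b)\geq 1$, this gives $\rk_v(a-1,b-1)=\#\{i<a:w(i)<b\}=\rk_w(a,b)=\rho$. Since $\pi$ is bigrassmannian with $\ess(\pi)=\{(a-1,b-1)\}$ and $\rk_\pi(a-1,b-1)=\rho-1$, Lemma~\ref{lemma:bigrasscompare} gives $u\geq\pi\iff\rk_u(a-1,b-1)\leq\rho-1$ for $u\in S_n$; hence, as $v\vee\pi$ is the least upper bound of $\{v,\pi\}$,
\[
u\geq v\vee\pi \quad\Longleftrightarrow\quad u\geq v \text{ and } \rk_u(a-1,b-1)\leq\rho-1 \qquad (u\in S_n),
\]
and in particular $v\not\geq v\vee\pi$. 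I would also use the \emph{cover test}: if $u\gtrdot v$ then $u=vt_{p,q}$ for a unique $p<q$ with $v(p)<v(q)$ and no $k\in(p,q)$ with $v(p)<v(k)<v(q)$, and the rank-change formula at a cover gives $\rk_u(a-1,b-1)=\rho-\bigl[\,p\leq a-1<q \text{ and } v(p)\leq b-1<v(q)\,\bigr]$; if the bracketed condition holds and $q>a$, then $p<a<q$ and $v(p)<b=v(a)<v(q)$, so $k=a$ violates the cover test, forcing $q=a$ and then $p\in\phi(w,z_{a,b})$. Conversely, for $i\in\phi(w,z_{a,b})$ the same formula gives $\rk_{vt_{i,a}}(a-1,b-1)=\rho-1$.

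Next I would check $\Phi(w,z_{a,b})\subseteq\perm(v\vee\pi)$. For $i\in\phi(w,z_{a,b})$, the element $u=vt_{i,a}$ satisfies $u\gtrdot v$ and $\rk_u(a-1,b-1)=\rho-1$, hence $u\geq v\vee\pi$ by the displayed equivalence. If $u'\in S_n$ with $u\geq u'\geq v\vee\pi$, then $v\leq u'\leq u$, so $\ell(u')\in\{\ell(v),\ell(v)+1\}$; the value $\ell(v)$ is impossible, since it would force $u'=v\not\geq v\vee\pi$, so $\ell(u')=\ell(u)$ and $u'=u$. Thus $u\in\perm(v\vee\pi)$. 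I would also note that $\Phi(w,z_{a,b})\neq\emptyset$: since $\rho\geq 1$, the largest index in the nonempty set $\{i<a:v(i)<b\}$ has no larger index below $a$ carrying a value $<b$, so it lies in $\phi(w,z_{a,b})$.

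The crux is to show that $v\vee\pi$ is equidimensional, i.e.\ every element of $\perm(v\vee\pi)$ has length exactly $\ell(v)+1$. I would deduce this from the geometric vertex decomposition of $I_w$ at $y=z_{a,b}$: being minors, the Fulton generators of $I_w$ have $y$-degree at most $1$, so $I_w$ is linear in $y$ and a geometric vertex decomposition with respect to $y$ exists; $\spec(R/I_w)$ is reduced and equidimensional by Proposition~\ref{prop:matSchubFacts}; $y\notin I_w$ because $\rk_w(i,j)\geq\rho\geq 1$ for every $(i,j)\geq(a,b)$; and $N_{y,I_w}=I_v\neq I_v+I_\pi=C_{y,I_w}$ since $\pi\not\leq v$ (because $\rk_v(a-1,b-1)=\rho>\rho-1$). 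By the consequence of \cite[Lemma~2.8]{KR} recalled after Definition~\ref{def:gvd}, $\spec(R/C_{y,I_w})$ is equidimensional; as $C_{y,I_w}=I_{v\vee\pi}$ by Corollary~\ref{cor:formOfC}, Lemma~\ref{lemma:asmIdealFacts}\ref{lempart:asmEquidim} then shows that $v\vee\pi$ is equidimensional.

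Finally I would conclude. By equidimensionality every $u\in\perm(v\vee\pi)$ has $\ell(u)=\deg(v\vee\pi)$; since the (nonempty) set $\Phi(w,z_{a,b})$ consists of elements of $\perm(v\vee\pi)$ of length $\ell(v)+1=\ell(w)$, we get $\deg(v\vee\pi)=\ell(w)$. Moreover, any $u\in\perm(v\vee\pi)$ now has $\ell(u)=\ell(v)+1$ and $u\geq v$, hence $u\gtrdot v$, while $\rk_u(a-1,b-1)\leq\rho-1$; the cover test forces $u=vt_{p,a}$ with $p\in\phi(w,z_{a,b})$, i.e.\ $u\in\Phi(w,z_{a,b})$, so $\perm(v\vee\pi)\subseteq\Phi(w,z_{a,b})$. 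Combined with the second step, $\perm(v\vee\pi)=\Phi(w,z_{a,b})$. The step I expect to be the main obstacle is the equidimensionality of $v\vee\pi$: the plan imports it from \cite[Lemma~2.8]{KR} rather than proving combinatorially that every permutation $u\geq v$ with $\rk_u(a-1,b-1)\leq\rho-1$ dominates some $vt_{i,a}$ with $i\in\phi(w,z_{a,b})$, which would need a delicate descent argument through $[v,u]$ controlling the rank at $(a-1,b-1)$.
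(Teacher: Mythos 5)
Your proposal is correct and takes essentially the same route as the paper's proof: the inclusion $\Phi(w,y)\subseteq\perm(v\vee\pi)$ via the rank computation at $(a-1,b-1)$ and Lemma~\ref{lemma:bigrasscompare}, the reverse inclusion via the cover analysis of $v$, and the crucial equidimensionality of $v\vee\pi$ imported from the geometric vertex decomposition (\cite[Lemma~2.8]{KR} together with Corollary~\ref{cor:formOfC} and Lemma~\ref{lemma:asmIdealFacts}). Your only deviations are cosmetic — a direct interval argument for minimality of the $vt_{i,a}$ and deriving $\deg(v\vee\pi)=\ell(w)$ from the nonemptiness of $\Phi(w,y)$ rather than from $\hgt C_{y,I_w}=\hgt I_w$.
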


\begin{proof}
For convenience, write $A=v \vee \pi$.
Write $\init_{z_{a,b}} (I_w) = C_{z_{a,b},I_w} \cap (N_{z_{a,b},I_w}+(z_{a,b}))$.  By \cite[Lemma 2.8]{KR}, $\spec(R/C_{z_{a,b},I_w})$ is equidimensional, and $\hgt (C_{z_{a,b},I_w}) = \hgt (I_w)$.  By Corollary \ref{cor:formOfC}, $C_{z_{a,b},I_w} = I_{\pi} + I_v$, and, by Lemma~\ref{lemma:asmIdealFacts}~\ref{lempart:addIwi}, $I_{\pi} + I_v = I_{\pi \vee v} = I_A$.  Hence, by Lemma~\ref{lemma:asmIdealFacts}\ref{lempart:asmHeight} and \ref{lempart:asmEquidim}, $\hgt(A)   = \hgt (C_{z_{a,b},I_w}) = \hgt (I_w) = \ell(w)$, and $A$ is equidimensional.  

We now claim that $\Phi(w,z_{a,b}) \subseteq \perm(A)$.  Fix some $w'\in \Phi(w,z_{a,b})$.  By definition of $\Phi(w,z_{a,b})$, $\ell(w') = \ell(w)$.  It follows that, if $w' \geq A$, then $w' \in \perm(A)$.  Also by definition of $\Phi(w,z_{a,b})$, $w'>v$.  By construction, \[\rk_{w'}(a-1,b-1)=\rk_v(a-1,b-1)-1=\rk_\pi(a-1,b-1).\]  Thus, by Lemma~\ref{lemma:bigrasscompare}, $w' \geq \pi$.  Since $w' \geq \pi,v$, we know $w' \geq A=\pi\vee v$, and so $w' \in \perm(A)$.

  Conversely, fix $\widetilde{w}\in \perm(A)$.  We seek to show  $\widetilde{w}\in \Phi(w,z_{a,b})$.  Because $A$ is equidimensional, all elements of $\perm(A)$ are of length $\ell(w)$. Since $\widetilde{w}\geq A>v$, it is enough to consider covers of $v$; i.e., we may assume $\widetilde{w}=vt_{i,j}$ for some $1\leq i<j\leq n$ such that $v(i)<v(j)$ and there does not exist $k$ so that $i<k<j$ and $v(i)<v(k)<v(j)$ \cite[Lemma 2.1.4]{BB05}. 
  By Lemma \ref{lemma:bigrasscompare}, \[\rk_{\tilde{w}}(a-1,b-1)\leq \rk_\pi(a-1,b-1)  = \rk_v(a-1,b-1)-1.\]  Because $\rk_{\tilde{w}}(a-1,b-1)<\rk_v(a-1,b-1)$, we must have $i\leq a-1$ and $v(i) \leq b-1$ as well as $j\geq a$ and $v(j) \geq b$.  Suppose that $j>a$.  Because $v(a) = b$ and $v(j) \geq b$, it must also be that $v(j)>b$.  But then $i<a<j$ and $v(i)<v(a)=b<v(j)$, contradicting the assumption that $\widetilde{w}$ is a cover of $v$.  Hence, $j=a$.  
  
  Thus, by definition of $\phi(w,z_{a,b})$, we have $i\in \phi(w,z_{a,b})$, and so $\tilde{w}\in \Phi(w,z_{a,b})$.
\end{proof}

\begin{example}
Let $w=4721653$ as in Example~\ref{example:transitionDroop}.  Consider the lower outside corner $(5,5)$ of $D(w)$.  Since $\rk_w(5,5)=3$, we set $\pi \in S_7$ to be the bigrassmannian permutation with $\ess(\pi)=\{(4,4)\}$ and $\rk_\pi(4,4)=2$, i.e.,\ $\pi=1256347$.  As before, set $v=4721563 = wt_{5,6}$.  Thus, we have 
\[v\vee \pi=
\begin{pmatrix}
0&0&0&1&0&0&0\\
0&0&0&0&0&0&1\\
0&1&0&0&0&0&0\\
1&0&0&-1&1&0&0\\
0&0&0&1&0&0&0\\
0&0&0&0&0&1&0\\
0&0&1&0&0&0&0\\
\end{pmatrix}.
\]
We leave it as an exercise to verify directly that $\perm(v\vee \pi)=\Phi(w,z_{55}).$ 
\end{example}

In light of Proposition~\ref{prop:ASMdecomp}, we may interpret the transition equations using ASMs: 

\begin{corollary}  \label{cor:transitionWithPerm} Fix $w \in S_n$ and a lower outside corner $(a,b)$ of $D(w)$.
Assume $\rk_w(a,b)\geq 1$, and let $\pi$ be the bigrassmannian permutation so that $\ess(\pi)=\{(a-1,b-1)\}$ and $\rk_\pi(a-1,b-1)=\rk_w(a,b)-1$.  Let $v=wt_{a,w^{-1}(b)}$.  Then \[\mathfrak S_w(\mathbf x, \mathbf y)=(x_a-y_b)\cdot \mathfrak S_v(\mathbf x, \mathbf y)+\sum_{u\in \perm(v\vee \pi)} \mathfrak S_{u}(\mathbf x, \mathbf y).\]
\end{corollary}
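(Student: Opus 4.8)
The plan is to obtain this as an immediate consequence of the classical transition formula together with the combinatorial identification carried out in Proposition~\ref{prop:ASMdecomp}. First I would apply Theorem~\ref{thm:transition} to the chosen lower outside corner $(a,b)$ of $D(w)$: with $y = z_{a,b}$ and $v = wt_{a,w^{-1}(b)}$, it states
\[
\mathfrak S_w(\mathbf x, \mathbf y) = (x_a - y_b)\cdot \mathfrak S_v(\mathbf x, \mathbf y) + \sum_{u \in \Phi(w,y)} \mathfrak S_u(\mathbf x, \mathbf y).
\]
Under the standing hypothesis $\rk_w(a,b) \geq 1$ — which is exactly what makes the bigrassmannian $\pi$ with $\ess(\pi) = \{(a-1,b-1)\}$ and $\rk_\pi(a-1,b-1) = \rk_w(a,b)-1$ a genuine permutation — Proposition~\ref{prop:ASMdecomp} gives the set equality $\perm(v \vee \pi) = \Phi(w,y)$. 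Substituting this into the summation yields the claimed identity, so the proof is a single line of rewriting once the two cited results are in hand.

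Since the entire content of the statement is the translation between the index set $\Phi(w,y)$ and $\perm(v \vee \pi)$, there is no real obstacle remaining at this stage: all of the substance lives upstream, in Proposition~\ref{prop:ASMdecomp} and its ingredients (Corollary~\ref{cor:formOfC}, which identifies $C_{y,I_w}$ with $I_{v\vee\pi}$, Lemma~\ref{lemma:bigrasscompare} on comparing bigrassmannians with ASMs, and Lemma~\ref{lemma:asmIdealFacts}\ref{lempart:addIwi} identifying $I_v + I_\pi$ with $I_{v\vee\pi}$ and hence $\perm(v\vee\pi)$ with the minimal permutations dominating $v$ and $\pi$). The only point worth a clause of care is the boundary case: when $\rk_w(a,b)=0$ one has $y \in I_w$ and $\pi$ degenerates, so the statement as phrased sidesteps this by assuming $\rk_w(a,b)\geq 1$; for that case one simply reads the conclusion directly off Theorem~\ref{thm:transition}.

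As an alternative, I note that one could bypass Theorem~\ref{thm:transition} entirely and argue purely combinatorially: the bijection $\psi$ of Lemma~\ref{lem:bpdBij} respects diagrams in the sense that $D(\mathcal B) = D(\psi(\mathcal B)) \sqcup \{(a,b)\}$ when $\psi(\mathcal B) \in \bpd(v)$ and $D(\mathcal B) = D(\psi(\mathcal B))$ otherwise, so summing $\wt(\mathcal B)$ over $\bpd(w)$ and grouping by the image of $\psi$ produces the factor $(x_a - y_b)$ on the $\mathfrak S_v$ term and an untwisted contribution $\sum_{u \in \Phi(w,y)}\mathfrak S_u(\mathbf x,\mathbf y)$. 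Either way, the final reindexing $\Phi(w,y) = \perm(v\vee\pi)$ is supplied by Proposition~\ref{prop:ASMdecomp}, which is the only nontrivial input.
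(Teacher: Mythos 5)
Your proposal is correct and matches the paper's intended derivation exactly: the corollary is obtained by taking Theorem~\ref{thm:transition} and reindexing the sum via the equality $\perm(v\vee\pi)=\Phi(w,y)$ from Proposition~\ref{prop:ASMdecomp}. The alternative bijective route via Lemma~\ref{lem:bpdBij} that you sketch is also sound but is not needed; the paper presents the corollary as an immediate consequence of the two cited results.
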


\begin{proposition}
\label{prop:linkDecomp}
Fix $w \in S_n$ and a lower outside corner $(a,b)$ of $D(w)$.   
Write $\init_{z_{a,b}} (I_w) = C_{z_{a,b},I_w} \cap ( N_{z_{a,b},I_w}+(z_{a,b}))$ for the geometric vertex decomposition of $I_w$ at $z_{a,b}$. Then \[C_{z_{a,b},I_w} =  \bigcap_{u \in \Phi(w,z_{a,b})} I_{u},\] and $N_{z_{a,b},I_w} = I_v$ where $v = wt_{a, w^{-1}(b)}$.
\end{proposition}
\begin{proof}
Because the geometric vertex decomposition does not depend on the choice of $z_{a,b}$-compatible term order, we may assume, for the purposes of computing $C_{z_{a,b},I_w}$ and $N_{z_{a,b},I_w}$, that $R$ is equipped with the term order $\tau_{a,b}$ (as in Notation \ref{not:tau}).  Then, by Proposition \ref{prop:naturalGensGrobner}, the Fulton generators, which have the form $\{z_{a,b}q_1+r_1, \ldots, z_{a,b}q_k+r_k, h_1, \ldots, h_\ell\}$ where $z_{a,b}$ does not divide any $q_i, r_i$, or $h_i$, 
form a Gr\"obner basis for $I_w$ under $\tau_{a,b}$.  Hence, $C_{z_{a,b},I_w} = (q_1, \ldots, q_k, h_1, \ldots, h_\ell)$ and $N_{z_{a,b},I_w} = (h_1, \ldots, h_\ell)$.  Then, by Lemma \ref{lem:NisSchub}, $N_{z_{a,b},I_w}=I_v$ for $v = wt_{a, w^{-1}(b)}$.

We now break our argument into cases.  First suppose $\rk_w(a,b)=0$.  Then $\Phi(w,z_{a,b})=\emptyset$, and so 
$\bigcap\{ I_{u}:u \in \Phi(w,z_{a,b})\}$
is the empty intersection of ideals, which is $R$.  Also, if $\rk_w(a,b)=0$, then $z_{a,b}$ is a Fulton generator of $I_w$, and so $1 = q_i$ for some $i \in [k]$, which is to say that $C_{z_{a,b},I_w} = R$, as well.

Alternatively, suppose $\rk_w(a,b)\geq 1$.  Let $\pi$ be the bigrassmannian permutation with $\ess(\pi)=\{(a-1,b-1)\}$ and $\rk_\pi(a-1,b-1)=\rk_w(a,b)-1$.  By Corollary~\ref{cor:formOfC}, $I_{v \vee \pi}=C_{z_{a,b},I_w}$.  We know from Proposition~\ref{prop:ASMdecomp} that $\perm(v \vee \pi)=\Phi(w,z_{a,b})$. Thus, by Lemma~\ref{lemma:asmIdealFacts}\ref{lempart:asmDecomp}, \[C_{z_{a,b},I_w} =  \bigcap_{u \in \Phi(w,z_{a,b})} I_{u}. \qedhere\]
\end{proof}

We give an example to illustrate Propositions \ref{prop:naturalGensGrobner} and \ref{prop:linkDecomp} and to see the recursion on Schubert determinantal ideals they give rise to in terms of the recursion on BPDs of Lemma \ref{lem:bpdBij}.

\begin{example}\label{ex:idealRecursion}
Set $w = 214365$ and $(a,b) = (5,5)$.  Then \[
I_w = \left(z_{11}, \begin{vmatrix} z_{11} & z_{12} & z_{13} \\
z_{21} & z_{22} & z_{23} \\
z_{31} & z_{32} & z_{33} 
 \end{vmatrix}, \begin{vmatrix} z_{11} & z_{12} & z_{13} & z_{14} & z_{15} \\
 z_{21} & z_{22} & z_{23} & z_{24} & z_{25}  \\
 z_{31} & z_{32} & z_{33} & z_{34} & z_{35} \\
  z_{41} & z_{42} & z_{43} & z_{44} & z_{45}  \\
 z_{51} & z_{52} & z_{53} & z_{54} & z_{55} 
 \end{vmatrix}\right).
\]  Using Proposition \ref{prop:naturalGensGrobner}, \[
C_{y,I_w} = \left(z_{11}, \begin{vmatrix} z_{11} & z_{12} & z_{13} \\
z_{21} & z_{22} & z_{23} \\
z_{31} & z_{32} & z_{33} 
 \end{vmatrix}, \begin{vmatrix} z_{11} & z_{12} & z_{13} & z_{14} \\
 z_{21} & z_{22} & z_{23} & z_{24}\\
 z_{31} & z_{32} & z_{33} & z_{34} \\
  z_{41} & z_{42} & z_{43} & z_{44}  \\
 \end{vmatrix}\right) \mbox{ and } N_{y,I_w} = \left(z_{11}, \begin{vmatrix} z_{11} & z_{12} & z_{13} \\
z_{21} & z_{22} & z_{23} \\
z_{31} & z_{32} & z_{33} 
 \end{vmatrix}\right).
\]

The Rothe BPDs of $w$ and $v = wt_{5,6} = 214356$ are below, with the green pipes noting those whose exiting rows are exchanged, as described in Lemma \ref{lem:bpdBij}.  
\[
\begin{tikzpicture}[x=1.5em,y=1.5em]
\node at (3, 7.5) {$w = 214365$};
\draw[step=1,gray, thin] (0,1) grid (6,7);
\draw[color=black, thick](0,1)rectangle(6,7);
\draw[thick,rounded corners, color=blue] (.5,1)--(.5,5.5)--(6,5.5);
\draw[thick,rounded corners, color=blue] (1.5,1)--(1.5,6.5)--(6,6.5);
\draw[thick,rounded corners, color = red] (2.5,1)--(2.5,3.5)--(6,3.5);
\draw[thick,rounded corners, color = orange] (3.5,1)--(3.5,4.5)--(6,4.5);
\draw[thick,rounded corners, color=ForestGreen] (4.5,1)--(4.5,1.5)--(6,1.5);
\draw[thick,rounded corners, color=ForestGreen] (5.5,1)--(5.5,2.5)--(6,2.5);
\end{tikzpicture} \hspace{2.5cm} 
\begin{tikzpicture}[x=1.5em,y=1.5em]
\node at (3, 7.5) {$v = 214356$};
\draw[step=1,gray, thin] (0,1) grid (6,7);
\draw[color=black, thick](0,1)rectangle(6,7);
\draw[thick,rounded corners, color=blue] (.5,1)--(.5,5.5)--(6,5.5);
\draw[thick,rounded corners, color=blue] (1.5,1)--(1.5,6.5)--(6,6.5);
\draw[thick,rounded corners, color=blue] (2.5,1)--(2.5,3.5)--(6,3.5);
\draw[thick,rounded corners, color=blue] (3.5,1)--(3.5,4.5)--(6,4.5);
\draw[thick,rounded corners, color=ForestGreen] (4.5,1)--(4.5,2.5)--(6,2.5);
\draw[thick,rounded corners, color=ForestGreen] (5.5,1)--(5.5,1.5)--(6,1.5);
\end{tikzpicture}
\] 
Drooping the red pipe of the Rothe BPD of $w$ into $(5,5)$ gives  $u_1 = 214536$ and drooping the orange pipe produces $u_2 = 215346$, as pictured below. 
\[
\begin{tikzpicture}[x=1.5em,y=1.5em]
\node at (3, 7.5) {Droop giving $u_1$};
\draw[step=1,gray, thin] (0,1) grid (6,7);
\draw[color=black, thick](0,1)rectangle(6,7);
\draw[thick,rounded corners, color=blue] (.5,1)--(.5,5.5)--(6,5.5);
\draw[thick,rounded corners, color=blue] (1.5,1)--(1.5,6.5)--(6,6.5);
\draw[thick,rounded corners, color = red] (2.5,1)--(2.5,2.5)--(4.5,2.5)--(4.5,3.5)--(6,3.5);
\draw[thick,rounded corners, color = blue] (3.5,1)--(3.5,4.5)--(6,4.5);
\draw[thick,rounded corners, color=ForestGreen] (4.5,1)--(4.5,1.5)--(6,1.5);
\draw[thick,rounded corners, color=ForestGreen] (5.5,1)--(5.5,2.5)--(6,2.5);
\end{tikzpicture}  \hspace{2.5cm} \begin{tikzpicture}[x=1.5em,y=1.5em]
\node at (3, 7.5) {Droop giving $u_2$};
\draw[step=1,gray, thin] (0,1) grid (6,7);
\draw[color=black, thick](0,1)rectangle(6,7);
\draw[thick,rounded corners, color=blue] (.5,1)--(.5,5.5)--(6,5.5);
\draw[thick,rounded corners, color=blue] (1.5,1)--(1.5,6.5)--(6,6.5);
\draw[thick,rounded corners, color = blue] (2.5,1)--(2.5,3.5)--(6,3.5);
\draw[thick,rounded corners, color = orange] (3.5,1)--(3.5,2.5)--(4.5,2.5)--(4.5,4.5)--(6,4.5);
\draw[thick,rounded corners, color=ForestGreen] (4.5,1)--(4.5,1.5)--(6,1.5);
\draw[thick,rounded corners, color=ForestGreen] (5.5,1)--(5.5,2.5)--(6,2.5);
\end{tikzpicture}
\]   
\[
\begin{tikzpicture}[x=1.5em,y=1.5em]
\draw[step=1,gray, thin] (0,1) grid (6,7);
\draw[color=black, thick](0,1)rectangle(6,7);
\draw[thick,rounded corners, color=blue] (.5,1)--(.5,5.5)--(6,5.5);
\draw[thick,rounded corners, color=blue] (1.5,1)--(1.5,6.5)--(6,6.5);
\draw[thick,rounded corners, color = red] (2.5,1)--(2.5,2.5)--(4.5,2.5)--(4.5,3.5)--(6,3.5);
\draw[thick,rounded corners, color = blue] (3.5,1)--(3.5,4.5)--(6,4.5);
\draw[thick,rounded corners, color=ForestGreen] (4.5,1)--(4.5,2.5)--(6,2.5);
\draw[thick,rounded corners, color=ForestGreen] (5.5,1)--(5.5,1.5)--(6,1.5);
\end{tikzpicture}  \hspace{2.5cm} \begin{tikzpicture}[x=1.5em,y=1.5em]
\draw[step=1,gray, thin] (0,1) grid (6,7);
\draw[color=black, thick](0,1)rectangle(6,7);
\draw[thick,rounded corners, color=blue] (.5,1)--(.5,5.5)--(6,5.5);
\draw[thick,rounded corners, color=blue] (1.5,1)--(1.5,6.5)--(6,6.5);
\draw[thick,rounded corners, color = blue] (2.5,1)--(2.5,3.5)--(6,3.5);
\draw[thick,rounded corners, color = orange] (3.5,1)--(3.5,2.5)--(4.5,2.5)--(4.5,4.5)--(6,4.5);
\draw[thick,rounded corners, color=ForestGreen] (4.5,1)--(4.5,2.5)--(6,2.5);
\draw[thick,rounded corners, color=ForestGreen] (5.5,1)--(5.5,1.5)--(6,1.5);
\end{tikzpicture}
\]   
\[
\begin{tikzpicture}[x=1.5em,y=1.5em]
\draw[step=1,gray, thin] (0,1) grid (6,7);
\draw[color=black, thick](0,1)rectangle(6,7);
\draw[thick,rounded corners, color=blue] (.5,1)--(.5,5.5)--(6,5.5);
\draw[thick,rounded corners, color=blue] (1.5,1)--(1.5,6.5)--(6,6.5);
\draw[thick,rounded corners, color = blue] (2.5,1)--(2.5,2.5)--(6,2.5);
\draw[thick,rounded corners, color = blue] (3.5,1)--(3.5,4.5)--(6,4.5);
\draw[thick,rounded corners, color=blue] (4.5,1)--(4.5,3.5)--(6,3.5);
\draw[thick,rounded corners, color=blue] (5.5,1)--(5.5,1.5)--(6,1.5);
\end{tikzpicture}  \hspace{2.5cm} \begin{tikzpicture}[x=1.5em,y=1.5em]
\draw[step=1,gray, thin] (0,1) grid (6,7);
\draw[color=black, thick](0,1)rectangle(6,7);
\draw[thick,rounded corners, color=blue] (.5,1)--(.5,5.5)--(6,5.5);
\draw[thick,rounded corners, color=blue] (1.5,1)--(1.5,6.5)--(6,6.5);
\draw[thick,rounded corners, color = blue] (2.5,1)--(2.5,3.5)--(6,3.5);
\draw[thick,rounded corners, color = blue] (3.5,1)--(3.5,2.5)--(6,2.5);
\draw[thick,rounded corners, color=blue] (4.5,1)--(4.5,4.5)--(6,4.5);
\draw[thick,rounded corners, color=blue] (5.5,1)--(5.5,1.5)--(6,1.5);
\end{tikzpicture}
\]   
We can determine the Fulton generators of the $I_{u_i}$ from the corresponding drooped BPDs of $w$, pictured above, and verify the equality $C_{z_{55},I_w} = I_{u_1} \cap I_{u_2}$.  
Here $\Phi(w,z_{55}) = \{u_1,u_2\}$, and \[
C_{y,I_w} = I_{u_1} \cap I_{u_2} =  \left((z_{11})+ I_3\begin{pmatrix} z_{11} & z_{12} & z_{13} \\
z_{21} & z_{22} & z_{23} \\
z_{31} & z_{32} & z_{33} \\
z_{41} & z_{42} & z_{43} 
 \end{pmatrix}
 \right) \cap \left((z_{11})+ I_3\begin{pmatrix} z_{11} & z_{12} & z_{13} & z_{14} \\
z_{21} & z_{22} & z_{23} & z_{24} \\
z_{31} & z_{32} & z_{33} & z_{34}
 \end{pmatrix}
 \right).
\]

In terms of the transition equations of Theorem \ref{thm:transition} (and Corollary \ref{cor:transitionWithPerm}), we have
\begin{align*}\mathfrak S_w(\mathbf x, \mathbf y) = (x_5-y_5)\cdot \mathfrak S_v(\mathbf x, \mathbf y)+\mathfrak S_{u_1}(\mathbf x, \mathbf y)+\mathfrak S_{u_2}(\mathbf x, \mathbf y). &  \qedhere
\end{align*} 
\end{example}

For $w \in S_n$ and $(a,b)$ a lower outside corner of $D(w)$, we conclude this section with an explicit description of $\init_{\tau_{a,b}}(I_w)$ in terms of pipe dreams (where $\tau_{a,b}$ is as in Notation \ref{not:tau}).  Recall that, for each pipe dream $\mathcal{D} \in \pipes(w)$, $C(\mathcal D) \subseteq [n] \times [n]$ denotes the set of crossing tiles of $\mathcal D$ and that $I_{C(\mathcal D)} = (z_{i,j} : (i,j) \in C(\mathcal D))$.

\begin{proposition}
Fix $w \in S_n$ and a lower outside corner $(a,b)$ of $D(w)$.  As before, let $v = wt_{a,w^{-1}(b)}$ and $\tau_{a,b}$ be the term order of Notation~\ref{not:tau}. 
Then \[\init_{\tau_{a,b}}(I_w)= \left ( \displaystyle \bigcap_{u\in \Phi(w,z_{a,b})}\left(\bigcap_{\mathcal D \in \pipes(u)} I_{C(\mathcal D)} \right)\right )\cap\left(\bigcap_{\mathcal D\in \pipes(v)} ( I_{C(\mathcal D)}+(z_{a,b}))\right).\]
\end{proposition}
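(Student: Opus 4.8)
The plan is to combine the geometric vertex decomposition of $I_w$ at $y=z_{a,b}$ from Proposition~\ref{prop:linkDecomp} with the $y$-compatibility of $\tau$, and then to identify the two ideals in that decomposition using the Knutson--Miller description of anti-diagonal initial ideals (Theorem~\ref{theorem:KMTheoremB}) and the ASM machinery of Lemma~\ref{lemma:asmIdealFacts}. First I would record the relevant Gr\"obner bases: by Proposition~\ref{prop:naturalGensGrobner}, the Fulton generators $\{yq_1+r_1,\dots,yq_k+r_k,h_1,\dots,h_\ell\}$ of $I_w$, with $y$ dividing none of the $q_i$, $r_i$, $h_j$, form a Gr\"obner basis under $\tau$, and since $\tau$ is lexicographic with $y$ greatest, $\init_\tau(yq_i+r_i)=y\,\init_\tau(q_i)$; hence $\init_\tau(I_w)$ is generated by the $y\,\init_\tau(q_i)$ together with the $\init_\tau(h_j)$. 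From the proof of Proposition~\ref{prop:naturalGensGrobner} and from Lemma~\ref{lem:NisSchub}, $\{q_1,\dots,q_k,h_1,\dots,h_\ell\}$ is a $\tau$-Gr\"obner basis of $C_{y,I_w}$ and $\{h_1,\dots,h_\ell\}$ is a $\tau$-Gr\"obner basis of $N_{y,I_w}=I_v$, and both of these ideals are generated by polynomials not involving $y$.

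The heart of the argument is the monomial-ideal identity $\init_\tau(I_w)=\init_\tau(C_{y,I_w})\cap\bigl(\init_\tau(N_{y,I_w})+(y)\bigr)$. This is the step I expect to be the main obstacle, since forming initial ideals does not commute with intersections in general; I would prove it by a direct monomial computation from the generating sets above. Set $M_C=\init_\tau(C_{y,I_w})$ and $M_N=\init_\tau(N_{y,I_w})$. These are monomial ideals not involving $y$, and $M_N\subseteq M_C$ (as $N_{y,I_w}\subseteq C_{y,I_w}$ and both generating sets are Gr\"obner bases). For such monomial ideals one checks $M_C\cap(M_N+(y))=M_N+yM_C$, and $M_N+yM_C$ is exactly the generating set for $\init_\tau(I_w)$ found in the first paragraph, the generators $y\,\init_\tau(h_j)$ being redundant.

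It then remains to rewrite the two factors combinatorially. Since $C_{y,I_w}$ and $N_{y,I_w}=I_v$ involve no $y$, and $\tau$ restricts on $y$-free polynomials to right-to-left reading order, which is an anti-diagonal term order $\sigma$, we have $\init_\tau(C_{y,I_w})=\init_\sigma(C_{y,I_w})$ and $\init_\tau(I_v)=\init_\sigma(I_v)$. When $\rk_w(a,b)\geq 1$: Corollary~\ref{cor:formOfC} gives $C_{y,I_w}=I_{v\vee\pi}$; Proposition~\ref{prop:ASMdecomp} gives $\perm(v\vee\pi)=\Phi(w,y)$; Lemma~\ref{lemma:asmIdealFacts}\ref{lempart:intersectInit} gives $\init_\sigma(I_{v\vee\pi})=\bigcap_{u\in\Phi(w,y)}\init_\sigma(I_u)$; and Theorem~\ref{theorem:KMTheoremB} gives $\init_\sigma(I_u)=\bigcap_{\mathcal D\in\pipes(u)}I_{C(\mathcal D)}$, so $\init_\tau(C_{y,I_w})=\bigcap_{u\in\Phi(w,y)}\bigcap_{\mathcal D\in\pipes(u)}I_{C(\mathcal D)}$. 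Theorem~\ref{theorem:KMTheoremB} also gives $\init_\tau(I_v)=\bigcap_{\mathcal D\in\pipes(v)}I_{C(\mathcal D)}$, and because the $I_{C(\mathcal D)}$ are monomial ideals not involving $y$, $\bigl(\bigcap_{\mathcal D\in\pipes(v)}I_{C(\mathcal D)}\bigr)+(y)=\bigcap_{\mathcal D\in\pipes(v)}\bigl(I_{C(\mathcal D)}+(y)\bigr)$. Substituting these into the identity of the previous paragraph yields the claimed formula. The case $\rk_w(a,b)=0$ is the degenerate instance $\Phi(w,y)=\emptyset$, $C_{y,I_w}=R$, where both sides collapse to $\init_\sigma(I_v)+(y)=\bigcap_{\mathcal D\in\pipes(v)}(I_{C(\mathcal D)}+(y))$; I would dispatch it separately.
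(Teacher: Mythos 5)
Your proposal is correct and follows essentially the same route as the paper's proof: Gr\"obner bases for $I_w$, $C_{y,I_w}$, and $N_{y,I_w}=I_v$ under $\tau$ via Proposition~\ref{prop:naturalGensGrobner} and Lemma~\ref{lem:NisSchub}, the monomial identity $\init_\tau(I_w)=\init_\tau(C_{y,I_w})\cap(\init_\tau(I_v)+(y))$, and then conversion to an anti-diagonal order followed by Corollary~\ref{cor:formOfC}, Proposition~\ref{prop:ASMdecomp}, Lemma~\ref{lemma:asmIdealFacts}\ref{lempart:intersectInit}, and Theorem~\ref{theorem:KMTheoremB}, with the $\rk_w(a,b)=0$ case handled separately. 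Your explicit verification of the identity $M_C\cap(M_N+(y))=M_N+yM_C$ is a slightly more detailed rendering of the step the paper justifies by noting $\init_\tau(I_v)\subseteq\init_\tau(I_A)$.
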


\begin{proof}
Let $\sigma$ be any anti-diagonal term order on $R$.  First assume $\rk_w(a,b)=0$, in which case $z_{a,b} \in I_w$.  Then $\init_{z_{a,b}}(I_w)= \init_{z_{a,b}}(I_v+(z_{a,b}))= I_v+(z_{a,b})$ because $z_{a,b}$ does not divide any term of any Fulton generator of $I_v$.   Because $\tau_{a,b}$ is $z_{a,b}$-compatible, \[
\init_{\tau_{a,b}}(I_w) = \init_{\tau_{a,b}}(\init_{z_{a,b}}(I_w)) = \init_{\tau_{a,b}}(I_v+(z_{a,b})) = \init_{\tau_{a,b}}(I_v)+(z_{a,b}).
\]
 Hence \[\init_{\tau_{a,b}}(I_w)=\init_{\tau_{a,b}}(I_v)+(z_{a,b})=\init_\sigma(I_v)+(z_{a,b})=\bigcap_{\mathcal D\in \pipes(v)} ( I_{C(\mathcal D)}+(z_{a,b})).\]  Furthermore, $\Phi(w,z_{a,b})=\emptyset$, and so  we have the empty intersection of ideals \[\displaystyle \bigcap_{u\in \Phi(w,z_{a,b})}\left(\bigcap_{\mathcal D \in \pipes(u)} I_{C(\mathcal D)} \right),\] which is (by convention) $R$.  So the statement holds.

Now suppose $\rk_w(a,b)\geq 1$.  Let $A=v\vee \pi$, where $\pi$ is the bigrassmannian permutation with essential cell $(a-1,b-1)$ and $\rk_\pi(a-1,b-1)=\rk_w(a,b)-1$.  Then by Corollary \ref{cor:formOfC}
\[\init_{z_{a,b}}(I_w)= I_A
\cap(I_v+(z_{a,b})).\]  

If $\{z_{a,b}q_1+r_1, \ldots, z_{a,b}q_k+r_k, h_1, \ldots, h_\ell\}$ are the Fulton generators of $I_w$, which 
form a Gr\"obner basis for $I_w$ under the $z_{a,b}$-compatible term order $\tau_{a,b}$ by Proposition \ref{prop:naturalGensGrobner}, then, by \cite[Theorem 2.1(a)]{KMY09}, $\{q_1, \ldots, q_k, h_1, \ldots, h_\ell\}$ is a Gr\"obner basis for $C_{z_{a,b},I_w}=I_A$ (using Corollary \ref{cor:formOfC}), and $\{h_1, \ldots, h_\ell\}$ is a Gr\"obner basis for $N_{z_{a,b},I_w}=I_v$ (using Proposition \ref{prop:linkDecomp}).  Hence, \begin{align*}
\init_{\tau_{a,b}}(I_w) &= (\init_{\tau_{a,b}}(z_{a,b}q_1), \ldots, \init_{\tau_{a,b}}(z_{a,b}q_k), \init_{\tau_{a,b}}(h_1), \ldots, \init_{\tau_{a,b}}(h_\ell))\\
& = (\init_{\tau_{a,b}}(I_A) \cap (z_{a,b}) )+\init_{\tau_{a,b}}(I_v)\\
& = \init_{\tau_{a,b}}(I_A) \cap (\init_{\tau_{a,b}}(I_v)+(z_{a,b})),
\end{align*} where the final equality holds because 
$\init_{\tau_{a,b}}(I_v) \subseteq \init_{\tau_{a,b}}(I_A)$.

Because both $I_v$ and $I_A$ have generating sets that do not involve $z_{a,b}$, $\init_{\tau_{a,b}}(I_v) = \init_\sigma(I_v)$ and $\init_{\tau_{a,b}}(I_A) = \init_\sigma(I_A)$.  Putting these equalities together,
\begin{align*}
    \init_{\tau_{a,b}}(I_w)&=\init_{\tau_{a,b}}(I_A) \cap \init_{\tau_{a,b}}(I_v+(z_{a,b})) \\
    &=\init_\sigma(I_A) \cap \init_\sigma((I_v)+(z_{a,b})) \\
    &=\left(\bigcap_{u\in \Phi(w,z_{a,b})} \init_\sigma(I_u) \right) \cap (\init_\sigma(I_v)+(z_{a,b})) & \text{(by Lemma~\ref{lemma:asmIdealFacts}\ref{lempart:intersectInit} and Proposition~\ref{prop:ASMdecomp}).}
    \end{align*}
    Thus, applying Theorem~\ref{theorem:KMTheoremB} yields
    \begin{align*}
       \init_{ \tau_{a,b}}(I_w)=\left (  \bigcap_{u\in \Phi(w,z_{a,b})}\left(\bigcap_{\mathcal D \in \pipes(u)} I_{C(\mathcal D)} \right)\right )\cap \bigcap_{\mathcal D\in \pipes(v)} ( I_{C(\mathcal D)}+(z_{a,b})) .&\qedhere 
    \end{align*}
\end{proof}

\section{The main result}
\label{section:proofs}

The main goal of this section is to prove Theorem \ref{thm:main}.  We also obtain, as a consequence of the proof of this theorem, the Cohen--Macaulayness of certain families of  equidimensional unions of matrix Schubert varieties.  As in previous sections, we will take $R = \kappa[z_{1,1}, \ldots, z_{n,n}]$ and assume that all ideals are ideals of $R$ unless otherwise stated.  

\subsection{Proof of main result}
\label{subsect:mainProofs}

We will use $\min(I)$ to denote the set of minimal primes of an ideal $I$ and $\ell(M)$ to denote the length of a finite length $R$-module $M$.  Suppose $P$ is a minimal prime of $I$, in which case $\spec(R/P)$ is an irreducible component of $\spec(R/I)$.  Recall that the multiplicity of $\spec(R/P)$ along $\spec(R/I)$ is defined to be the length $\ell(R_P/IR_P)$ (equivalently, $\ell((R/I)_P)$) and is denoted $\mult_P(R/I)$.

We will use geometric vertex decomposition to develop a recurrence on unions of matrix Schubert varieties that mirrors the recurrence on bumpless pipe dreams from Section \ref{sect:BPDandTransition}.  Before beginning the proofs in this section, we give an example to illustrate the structure of the induction. 

\begin{example}
Assume $w = 214365$ and $(a,b) = (5,5)$ as in Example \ref{ex:idealRecursion}.   Keeping notation from that example for $u_1$, $u_2$, and $v$, we have seen that the equality of ideals \[
\init_{z_{55}}(I_w) = (I_{u_1} \cap I_{u_2}) \cap (I_{v}+(z_{55}))
\] is reflected in a bijection of the BPDs of $w$ with the union of the BPDs of $u_1$, $u_2$, and $v$.

 Fix a $z_{55}$-compatible term order $\sigma$. The minimal primes of $\init_\sigma (I_w)$ that contain $z_{55}$ are of the form $Q+(z_{55})$ for a minimal prime $Q$ of $\init_\sigma(I_v)$, and the minimal primes of $\init_\sigma (I_w)$ that do not contain $z_{55}$ are the minimal primes of $\init_\sigma(C_{z_{55},I_w}) = \init_\sigma(I_{u_1} \cap I_{u_2})$, which are in turn the union of the minimal primes of $\init_\sigma(I_{u_1})$ and $\init_\sigma(I_{u_2})$.  If $P = (z_{11},z_{12},z_{21})$, which is a minimal prime both of $\init_\sigma(I_{u_1})$ and of $\init_\sigma(I_{u_2})$, then $\mult_P(R/\init_\sigma(I_w)) = 2=\mult_P(R/\init_\sigma(C_{z_{55},I_w}))$.  There are no other primes which are minimal over both $\init_\sigma(I_{u_1})$ and $\init_\sigma(I_{u_2})$.  The multiplicity of every other irreducible component along $\spec(R/\init_\sigma(I_w))$ is $1$.
\end{example}

The following lemma will facilitate the inductive argument in our main theorem.  A geometric vertex decomposition of $I_w$ at a lower outside corner $D(w)$ will allow us to understand $\init_\sigma(I_w)$ in terms of an equidimensional union of matrix Schubert varieties, each of which is in an appropriate sense simpler than $X_w$.  The following lemma will allow us to understand that union by studying each component individually.

Because we are interested both in full, that is, monomial, initial ideals and also in ideals arising from geometric vertex decomposition, we will prefer to study degenerations determined by weight orders in addition to those determined by monomial orders.  For a general background on weight orders, we refer the reader to \cite[Chapter 15]{Eis95}. To construct a weight order, one assigns to each variable $z_{i,j} \in R$ an integer $\lambda_{i,j}$.   Let $\lambda$ be the vector of the $\lambda_{i,j}$. For exponents $a_{i,j}$, the weight of a monomial $\prod \{z_{i,j}^{a_{i,j}}: (i,j) \in [n] \times [n]\}$ is $\sum \{a_{i,j}\lambda_{i,j}: (i,j) \in [n] \times [n]\}$.  The initial term of a polynomial $f$, denoted $\init_{\lambda}(f)$ is the sum of the terms of $f$ of highest weight.  If $I$ is an ideal, then $\init_\lambda(I) = (\init_\lambda(f): f \in I)$. For any finite collection of ideals $I_1, \ldots, I_k$ and any term order $\sigma$, there exists a weight vector $\lambda$ so that $\init_{\sigma}(I_\ell) = \init_\lambda(I_\ell)$ for all $\ell \in [k]$.  A geometric vertex decomposition at $z_{i,j}$ corresponds to the weight vector with $\lambda_{i,j} = 1$ and all other entries equal to $0$.

\begin{lemma}\label{lem:primaryMult} Let $J$ be an ideal of $R$ defining an equidimensional scheme with $\min(J) = \{P_1, \ldots, P_r\}$. Let $\lambda$ be any weight order.  For all $\mathcal{P} \in \min(\init_\lambda(J))$, 
\[
\mult_\mathcal{P}(R/\init_\lambda(J)) = \displaystyle \sum_{i=1}^r  \mult_{P_i}(R/J) \cdot \mult_{\mathcal{P}}(R/\init_\lambda(P_i) ).
\]
\end{lemma}

We thank Matt Larson for pointing us to Lemma 0H4J in the Stacks Project \cite{stacks-project} and for suggesting this proof.  We refer the reader to the Stacks Project for background on cycles.

\begin{proof}
Let $X = \spec(R/J)$, and let $d = \dim(X)$. Let $X_\init = \spec(R/\init_\lambda(J))$, which is then also of dimension $d$.  View the Gr\"obner degeneration determined by $\lambda$ as a flat family over the discrete valuation ring $\kappa[t]_{(t)}$, whose generic fiber we identify with $X$ and whose special fiber we identify with $X_\init$.  

For each $i \in [r]$, let $X_i = \spec(R/P_i)$, $(X_i)_\init = \spec(R/\init_\lambda(P_i))$, and $X_\mathcal{P} = \spec(R/\mathcal{P})$ for each $\mathcal{P} \in \min(\init_\lambda(J))$.  Note that $\min(\init_\lambda(J)) = \bigcup \{\min(\init_\lambda(P_i)) :i \in [r] \}$, and that all such primes are of height $\hgt(J)$.  

Let $Z_d(X)$ (respectively, $Z_d(X_\init)$) denote the group of cycles of dimension $d$ on $X$, which is free abelian on the classes of the integral closed subschemes of $X$ (respectively, of $X_\init$) of dimension $d$.  For a closed subscheme $Y$ of $X$ or of $X_\init$, we will let $[Y]$ denote its image in the appropriate group of cycles.

Now \begin{equation}\label{bigCycles}
[X] = \sum_{i \in [r]} \mult_{P_i}(R/J) [X_i],
\end{equation}
 \begin{equation}\label{smallCycles}
 [X_\init] = \sum_{\mathcal{P} \in \min(R/\init_\lambda(J))} \mult_{\mathcal{P}}(R/\init_\lambda(J)) [X_\mathcal{P}],
\end{equation}
and \begin{equation}\label{cycles-components}
 [(X_i)_\init] = \sum_{\mathcal{P} \in \min(R/\init_\lambda(J))} \mult_{\mathcal{P}}(R/\init_\lambda(P_i)) [X_\mathcal{P}].
\end{equation}

By Lemma 0H4J in the Stacks Project \cite{stacks-project}, there is a group homomorphism taking $[X]$ to $[X_\init]$ and each $[X_i]$ to $[(X_i)_\init]$.  Applying this map to both sides of Equation \ref{bigCycles} and making substitutions using Equations \ref{smallCycles} and \ref{cycles-components}, we obtain
 \begin{equation}\label{total-mults}
\sum_{\mathcal{P} \in \min(R/\init_\lambda(J))} \mult_{\mathcal{P}}(R/\init_\lambda(J)) [X_\mathcal{P}] = \sum_{i \in [r]} \mult_{P_i}(R/J) \sum_{\mathcal{P} \in \min(R/\init_\lambda(J))} \mult_{\mathcal{P}}(R/\init_\lambda(P_i)) [X_\mathcal{P}].
\end{equation}

Because the $[X_{\mathcal{P}}]$ are among the free generators of $Z_d(X_\init)$, the coefficients of each $[X_\mathcal{P}]$ on the left and right hand sides of Equation \ref{total-mults} must agree, yielding the desired result.
\end{proof}

To facilitate an inductive argument in the main theorem of this paper, we define a relation on $S_n$.  The purpose of this relation is to describe a manner in which, for $(a,b)$ a lower outside corner of $D(w)$, the elements of $\Phi(w,z_{a,b})$ and $wt_{a,w^{-1}(b)}$ are all appropriately understood to be smaller than $w$.  This will allow a geometric vertex decomposition of $I_w$ at $z_{a,b}$ to be the key step in our inductive argument.

\begin{notation}\label{not:indOrder} If $u,w \in S_n$, we will say that $u \prec w$ if, \begin{enumerate} 
\item for every lower outside corner $(\alpha,\beta)$ of $D(u)$, there exists some $(\gamma,\delta)$ lower outside corner of $D(w)$ so that $(\alpha,\beta) \leq (\gamma,\delta)$ (i.e., $\alpha \leq \gamma$ and $\beta \leq \delta$), and 
\item there exists some lower outside corner $(\epsilon, \zeta)$ of $D(w)$ so that there does \emph{not} exist any $(\alpha,\beta) \in D(u)$ satisfying $(\epsilon, \zeta) \leq (\alpha, \beta)$.
\end{enumerate} Note that the relation $\prec$ is transitive.    
\end{notation}

\begin{example}
Set $w = 52143$, $u_1 = 52314$, and $u_2 = 53124$.  Then $u_1, u_2 \prec w$.  To see this, note that  $(2,2),(3,1) \leq (4,3)$ (corresponding to the light gray diagram boxes in the Rothe diagrams below) and $(1,4) \leq (1,4)$ (corresponding to the dark gray diagram boxes in the Rothe diagrams below).
\[
\begin{tikzpicture}[x=1.5em,y=1.5em]
\node at (2.5,5.5) {{$u_1 = 52314$}};
\draw[step=1,gray, thin] (0,0) grid (5,5);
\draw[color=black, thick](0,0)rectangle(5,5);
\filldraw [black](4.5,4.5)circle(.1);
\filldraw [black](1.5,3.5)circle(.1);
\filldraw [black](2.5,2.5)circle(.1);
\filldraw [black](.5,1.5)circle(.1);
\filldraw [black](3.5,.5)circle(.1);
\draw[thick, color=blue] (.5,0)--(.5,1.5)--(5,1.5);
\draw[thick, color=blue] (1.5,0)--(1.5,3.5)--(5,3.5);
\draw[thick, color=blue] (2.5,0)--(2.5,2.5)--(5,2.5);
\draw[thick, color=blue] (3.5,0)--(3.5,.5)--(5,.5);
\draw[thick, color=blue] (4.5,0)--(4.5,4.5)--(5,4.5);
\filldraw[color=black, fill=gray](3,4)rectangle(4,5);
\filldraw[color=black, fill=gray!30](0,2)rectangle(1,3);
\end{tikzpicture} \hspace{2cm} \begin{tikzpicture}[x=1.5em,y=1.5em]
\node at (2.5,5.5) {{$u_2 = 53124$}};
\draw[step=1,gray, thin] (0,0) grid (5,5);
\draw[color=black, thick](0,0)rectangle(5,5);
\filldraw [black](4.5,4.5)circle(.1);
\filldraw [black](2.5,3.5)circle(.1);
\filldraw [black](.5,2.5)circle(.1);
\filldraw [black](1.5,1.5)circle(.1);
\filldraw [black](3.5,.5)circle(.1);
\draw[thick, color=blue] (.5,0)--(.5,2.5)--(5,2.5);
\draw[thick, color=blue] (1.5,0)--(1.5,1.5)--(5,1.5);
\draw[thick, color=blue] (2.5,0)--(2.5,3.5)--(5,3.5);
\draw[thick, color=blue] (3.5,0)--(3.5,.5)--(5,.5);
\draw[thick, color=blue] (4.5,0)--(4.5,4.5)--(5,4.5);
\filldraw[color=black, fill=gray](3,4)rectangle(4,5);
\filldraw[color=black, fill=gray!30](1,3)rectangle(2,4);
\end{tikzpicture} \hspace{2cm} \begin{tikzpicture}[x=1.5em,y=1.5em]
\node at (2.5,5.5) {{$w = 52143$}};
\draw[step=1,gray, thin] (0,0) grid (5,5);
\draw[color=black, thick](0,0)rectangle(5,5);
\filldraw [black](4.5,4.5)circle(.1);
\filldraw [black](1.5,3.5)circle(.1);
\filldraw [black](.5,2.5)circle(.1);
\filldraw [black](3.5,1.5)circle(.1);
\filldraw [black](2.5,.5)circle(.1);
\draw[thick, color=blue] (.5,0)--(.5,2.5)--(5,2.5);
\draw[thick, color=blue] (1.5,0)--(1.5,3.5)--(5,3.5);
\draw[thick, color=blue] (2.5,0)--(2.5,.5)--(5,.5);
\draw[thick, color=blue] (3.5,0)--(3.5,1.5)--(5,1.5);
\draw[thick, color=blue] (4.5,0)--(4.5,4.5)--(5,4.5);
\filldraw[color=black, fill=gray](3,4)rectangle(4,5);
\filldraw[color=black, fill=gray!30](2,1)rectangle(3,2);
\end{tikzpicture}
\]
Meanwhile, there does not exist any $(\alpha, \beta)$ in either $D(u_i)$ so that $(4,3) \leq (\alpha,\beta)$. We remark that $\{u_1,u_2\} = \Phi(w, z_{43})$. 
\end{example}

\begin{lemma}
\label{lem:indSmaller}
Fix $w \in S_n$, and suppose  that $(a,b)$ is a lower outside corner of $D(w)$. Then $u \prec w$ for each $u \in \Phi(w,z_{a,b}) \cup \{wt_{a,w^{-1}(b)}\}$.
\end{lemma}
\begin{proof}
Condition (1) in the definition of $\prec$ follows from the bijection from the proof of Lemma~\ref{lem:bpdBij} and the definition of lower outside corner. (Alternatively, one may phrase this argument in terms of the ``marching" operation of \cite{KY04}.) Condition (2) is satisfied by the lower outside corner $(a,b)$ of $D(w)$.\end{proof}

We now define the family of term orders that will appear in this paper's main theorem.  

\begin{defn}\label{def:lexFromSE}
Fix permutations $w_1, \ldots, w_r \in S_n$.  We call a lexicographic term order $\sigma$ \mydef{lexicographic from southeast} with respect to $\{w_1, \ldots, w_r\}$ if $\{w_1, \ldots, w_r\}$ is the singleton set consisting of the identity permutation or if the following conditions hold on the lexicographically largest variable $z_{a,b}$ involved in any Fulton generator of any $w_i$:
 \begin{enumerate}
    \item For each $i \in [r]$, if $z_{a,b}$ is involved in a Fulton generator of $I_{w_i}$, then $(a,b)$ is a lower outside corner of $D(w_i)$, and
    \item $\sigma$ is lexicographic from southeast with respect to \begin{align*}
   & \{w_i t_{a,w_i^{-1}(b)} : i \in [r], (a,b) \mbox{ is a lower outside corner of } D(w_i)\} \\
   \cup  &\{u \in \Phi(w_i,z_{a,b}) : i \in [r], (a,b) \mbox{ is a lower outside corner of } D(w_i)\} \\
   \cup &\{w_i : i \in [r], (a,b) \mbox{ is not a lower outside corner of } D(w_i)\}.
    \end{align*}
\end{enumerate}
\end{defn}

We consider two examples of term orders that are lexicographic from southeast with respect to any subset of $S_n$.  We will return to these examples several times. Let $\sigma$ be the lexicographic order on the variables ordered starting from $z_{n,n}$ and progressing up column $n$, then from $z_{n, n-1}$ up column $n-1$ and so on.  Similarly, let $\sigma'$ be the lexicographic order on the variables ordered starting from $z_{n,n}$ and progressing left along row $n$, then from $z_{n-1, n}$ left along row $n-1$ and so on.   Note that both $\sigma$ and $\sigma'$, in addition to being lexicographic from southeast, are diagonal term orders.

 Recall that, to a subset $E$ of $[n] \times [n]$, we associate the ideal $I_E = (z_{i,j} : (i,j) \in E)$.  In particular, if $\mathcal{B}$ is a bumpless pipe dream, then $I_{D(\mathcal{B})}$ is the ideal generated by the $z_{i,j}$ where $(i,j)$ is a blank tile of $\mathcal{B}$.

We will now prove the main result of this paper. The case $r=1$ of its immediate corollary (Corollary \ref{cor:mainCor}) was conjectured by Hamaker, Pechenik, and Weigandt in \cite[Conjecture 1.2]{HPW}.

\begin{theorem}
\label{thm:main}
Fix distinct permutations $w_1, \ldots, w_r \in S_n$ of the same length and a term order $\sigma$ that is lexicographic from southeast with respect to $\{w_i:i\in[r]\}$. Set $J = \bigcap \{I_{w_i}:i\in[r]\}$.  Then the irreducible components of $\spec(R/\init_\sigma(J))$, counted with multiplicity, are indexed by 
$ \bigcup\{\bpd(w_i):i\in[r]\}$.  
Precisely, the multiplicity of $\spec(R/P)$ along $\spec(R/\init_\sigma(J))$ is \[
\#\left\{\mathcal{B} \in \bpd(w_1)\cup \cdots \cup \bpd(w_r) : P = I_{D(\mathcal{B})} \right\}.
\]
\end{theorem}

\begin{proof}
By Lemma \ref{lem:primaryMult}, it suffices to consider $r=1$.  Set $w = w_1$.  We will proceed by induction on $\prec$.  The unique minimal element of $S_n$ under $\prec$ is the identity permutation, for which the claim is clear.

Now take an arbitrary non-identity $w \in S_n$.  Let $z_{a,b}$ be the lexicographically largest variable involved in a Fulton generator of $I_w$.  Then $(a,b)$ is a lower outside corner of $D(w)$.  

Set $v = wt_{a,w^{-1}(b)}$.   By Proposition \ref{prop:linkDecomp}, $C_{z_{a,b},I_w} = \bigcap\{I_u : u \in \Phi(w,z_{a,b})\}$ and $N_{z_{a,b},I_w} = I_v$.  By Lemma \ref{lem:indSmaller}, $u \prec w$ for each $u \in \Phi(w,z_{a,b})$ and $v\prec w$.   Fix $P \in \min(\init_\sigma(I_w))$.  Now $\init_\sigma(I_w) = \init_\sigma(\init_{z_{a,b}}(I_w))$ and $\init_{z_{a,b}}(I_w) = C_{z_{a,b},I_w} \cap (N_{z_{a,b},I_w}+(z_{a,b}))$.

Hence, by  Lemma \ref{lem:primaryMult} and the inductive hypothesis  \begin{align*}
\mult_P(\init_\sigma(I_w)) &= \mult_P(\init_\sigma(\init_{z_{a,b}}(I_w)))\\
& = \sum_{u \in \Phi(w,z_{a,b})} \mult_P(\init_\sigma(I_u)) + \mult_P(\init_\sigma(I_v)+(z_{a,b}))\\
 = &\#\left\{\mathcal{B} \in \bigcup_{u \in \Phi(w,z_{a,b})} \bpd(u): P = I_{D(\mathcal{B})}\right\}+\#\left\{\mathcal{B} \in \bpd(v): P = I_{D(\mathcal{B} \cup \{(a,b)\})}\right\}.
\end{align*}  
Finally, by Lemma \ref{lem:bpdBij}, \begin{align*}
&\#\left\{\mathcal{B} \in \bigcup_{u \in \Phi(w,z_{a,b})} \bpd(u): P = I_{D(\mathcal{B})}\right\}+\#\left\{\mathcal{B} \in \bpd(v): P = I_{D(\mathcal{B} \cup \{(a,b)\})}\right\}\\
 = & \#\left\{\mathcal{B} \in \bpd(w) : P = I_{D(\mathcal{B})} \right\},
\end{align*} and so $\mult_P(\init_\sigma(I_w)) = \#\left\{\mathcal{B} \in \bpd(w) : P = I_{D(\mathcal{B})} \right\}$, as desired.
\end{proof}

Because the term order $\sigma$ in Theorem \ref{thm:main} is a lexicographic term order, we may understand $\init_\sigma(J)$ via a sequence of geometric vertex decompositions, in the manner that the induction in the proof of Theorem \ref{thm:main} carries out.  When $X_{\init} = \spec(R/\init_\sigma(J))$ fails to be generically reduced, this procedure allows us to pinpoint when in the degeneration from $X = \spec(R/J)$ to $X_{\init}$ this failure emerges. Using Lemma \ref{lem:primaryMult}, for each $I_u \in \min(\init_{z_{a,b}}(J))$, \[
\mult_{I_u}(R/\init_{z_{a,b}}(J)) = \sum_{i \in [r]} \mult_{I_u}(R/\init_{z_{a,b}}(I_{w_i})).
\]

Thus, using Proposition \ref{prop:linkDecomp}, $\spec(R/\init_{z_{a,b}}(J))$ fails to be generically reduced if and only if there is some $i \in [r]$ so that $(a,b)$ is a lower outside corner of $D(w_i)$ and some $j \in [r]$ so that $w_j \in \Phi(w_i,z_{a,b})$ or some $i' \in [r]$, $i \neq i'$, so that $(a,b)$ is also a lower outside corner of $D(w_{i'})$ and $\Phi(w_i,z_{a,b}) \cap \Phi(w_{i'},z_{a,b}) \neq \emptyset$.
  
The example below illustrates Theorem \ref{thm:main} for a union of two matrix Schubert varieties of codimension one.

\begin{example}
Let $w_1=213$ and $w_2=132$.  
Then $I_{w_1}=\left(z_{11}\right)$ and $I_{w_2}=\left(\begin{vmatrix} z_{11} & z_{12} \\
z_{21} & z_{22} 
\end{vmatrix} \right)$.  Let $J=I_{w_1}\cap I_{w_2}=(z_{11}^2z_{22} - z_{11}z_{12}z_{21})$.  With respect to any lexicographic from southeast term order (or indeed any diagonal term order $\sigma$), we have $\init_\sigma(J)=(z_{11}^2z_{22}),$ which has primary decomposition $(z_{11}^2)\cap (z_{22})$. As such, $\mult_{I_{\{(1,1)\}}}(S/\init_\sigma(J))=2$, and $\mult_{I_{\{(2,2)\}}}(R/\init_\sigma(J))=1$.

Note that \[\bpd(w_1)=\left\{	
\raisebox{-2em}{\begin{tikzpicture}[x=1.5em,y=1.5em]
	\draw[step=1,gray, thin] (0,0) grid (3,3);
	\draw[color=black, thick](0,0)rectangle(3,3);
	\draw[thick,rounded corners,color=blue] (.5,0)--(.5,1.5)--(3,1.5);
	\draw[thick,rounded corners,color=blue] (1.5,0)--(1.5,2.5)--(3,2.5);
	\draw[thick,rounded corners,color=blue] (2.5,0)--(2.5,.5)--(3,.5);
	\end{tikzpicture}}
\right\} \hspace{1em} \text{ and } \hspace{1em} \bpd(w_2)=\left\{\raisebox{-2em}{	\begin{tikzpicture}[x=1.5em,y=1.5em]
	\draw[step=1,gray, thin] (0,0) grid (3,3);
	\draw[color=black, thick](0,0)rectangle(3,3);
	\draw[thick,rounded corners,color=blue] (.5,0)--(.5,2.5)--(3,2.5);
	\draw[thick,rounded corners,color=blue] (1.5,0)--(1.5,.5)--(3,.5);
	\draw[thick,rounded corners,color=blue] (2.5,0)--(2.5,1.5)--(3,1.5);
	\end{tikzpicture}
\, , \,
	\begin{tikzpicture}[x=1.5em,y=1.5em]
	\draw[step=1,gray, thin] (0,0) grid (3,3);
	\draw[color=black, thick](0,0)rectangle(3,3);
	\draw[thick,rounded corners,color=blue] (.5,0)--(.5,1.5)--(1.5,1.5)--(1.5,2.5)--(3,2.5);
\draw[thick,rounded corners,color=blue] (1.5,0)--(1.5,.5)--(3,.5);
\draw[thick,rounded corners,color=blue] (2.5,0)--(2.5,1.5)--(3,1.5);
	\end{tikzpicture}
}\right\}.\]
Thus, we have two BPDs in $\bpd(w_1)\cup\bpd(w_2)$ that correspond to the prime $I_{\{(1,1)\}} = (z_{11})$ and one that corresponds to $I_{\{(2,2)\}} = (z_{22})$, as predicted by Theorem~\ref{thm:main}.  Observe that $w_1 \in \Phi(w_2,z_{22})$.
\end{example}

Recall that, given $w \in S_n$, Knutson and Miller \cite[Theorem A]{KM05} showed that the $\mathbb{Z}^{2n}$-graded multidegree of $R/I_w$ is $\mathfrak S_w(\mathbf x, -\mathbf y)$. With notation and assumptions from Theorem \ref{thm:main}, we remark that Theorem \ref{thm:main} shows that the $\mathbb{Z}^{2n}$-graded multidegree of $R/\init_\sigma(I_w)$, hence also of $R/I_w$, is $\sum_{\mathcal B\in \bpd(w)}\wt(\mathcal B)(\mathbf x, -\mathbf y)$.  That is, combining \cite[Theorem A]{KM05} and Theorem \ref{thm:main} recovers Lam, Lee, and Shimozono's bumpless pipe dream formula for double Schubert polynomials.  Alternatively, one may recover the formula using the fact that both Schubert polynomials and also appropriate droop moves on bumpless pipe dreams satisfy the transition equations, as described in Corollary \ref{cor:transitionWithPerm} (see also the proof of Theorem~\ref{thm:transition} in Section \ref{section:additionalConsequences}).

\begin{corollary}\label{cor:mainCor}
Suppose that $w_1, \ldots, w_r \in S_n$ are permutations of the same length, and set $J=\bigcap \{I_{w_i}:i\in[r]\}$. There exist diagonal term orders so that the irreducible components of $\spec(R/\init_\sigma(J))$, counted with multiplicity, are indexed by 
$ \bigcup\{\bpd(w_i):i\in[r]\}$.  
Precisely, the multiplicity of $\spec(R/P)$ along $\spec(R/\init_\sigma(J))$ is \[
\#\left\{\mathcal{B} \in \bpd(w_1)\cup \cdots \cup \bpd(w_r) : P = I_{D(\mathcal{B})} \right\}.
\]
\end{corollary}
\begin{proof}
Let $\sigma$ be the lexicographic order on the variables ordered starting from $z_{n,n}$ and progressing up column $n$, then from $z_{n, n-1}$ up column $n-1$ and so on.  Similarly, let $\sigma'$ be the lexicographic order on the variables ordered starting from $z_{n,n}$ and progressing left along row $n$, then from $z_{n-1, n}$ left along row $n-1$ and so on.  Recall that $\sigma$ and $\sigma'$ are both diagonal term orders that are also lexicographic from southeast with respect to $\{w_1, \ldots, w_r\}$.  Considering either of those term orders, the statement is immediate from Theorem \ref{thm:main}.
\end{proof}

With notation and assumptions as in Theorem \ref{thm:main}, the situation is especially nice when $\spec(R/\init_\sigma(J))$ is reduced.  In that case, for all $D\subseteq [n]\times [n]$, \[\#\{\mathcal B\in \displaystyle\bigcup_{i\in[r]}\bpd(w_i):D(\mathcal B)=D\}\leq 1;\] that is, there are no repeated diagrams occurring among the BPDs of the $w_i$.  We will also see good behavior in this case with respect to Cohen--Macaulayness in Corollary \ref{cor:CM}. 
We conclude this subsection by recording corollaries concerning similarities among the initial ideals arising from lexicographic from southeast term orders, particularly when some lexicographic from southeast initial scheme is known to be reduced.

\begin{corollary}\label{cor:reduced}
Suppose that $w_1, \ldots, w_r \in S_n$ are permutations of the same  length, and set 
$J = \bigcap\{ I_{w_i}:i \in [r]\}$.
Fix a lexicographic from southeast term order $\sigma$ with respect to $\{w_1, \ldots, w_r\}$.
The following two conditions are equivalent:
\begin{enumerate}
    \item $D(\mathcal{B}) = D(\mathcal{B}')$ implies $\mathcal{B} = \mathcal{B}'$ for all 
    $\mathcal{B}, \mathcal{B}' \in \bigcup\{ \bpd(w_i):i \in [r]\}$
    and $\init_\sigma(J)$ has no embedded primes
    \item $\init_\sigma(J)$ is radical.
\end{enumerate}
\end{corollary}
\begin{proof}
A radical ideal cannot have embedded primes.  If $I$ is an ideal without embedded primes, then $I$ is radical if and only if the multiplicity of $\spec(R/I)$ along each irreducible component is $1$.  The result now follows from Theorem \ref{thm:main}.
\end{proof}

\begin{corollary}\label{cor:uniqueDiag}
Suppose that $w_1, \ldots, w_r \in S_n$ are permutations of the same  length, and set 
$J = \bigcap\{ I_{w_i}:i \in [r]\}$.
Suppose that $\sigma$ is a term order satisfying the two equivalent conditions of Corollary \ref{cor:reduced}.  Then $\init_{\sigma}(J) = \init_{\sigma'}(J)$ for every term order $\sigma'$ that is lexicographic from southeast with respect to $\{w_1, \ldots, w_r\}$.
\end{corollary}
\begin{proof}
It follows from Theorem \ref{thm:main} and Corollary \ref{cor:reduced} that \[
\sqrt{\init_{\sigma'}(J)} = \sqrt{\init_{\sigma}(J)} = \init_{\sigma}(J).
\] Hence, $\hilb(R/\sqrt{\init_{\sigma'}(J)};\mathbf t) = \hilb(R/\init_{\sigma}(J);\mathbf t)$ (where the Hilbert functions may be computed with respect to any grading 
for which $J$ is homogeneous, for example the standard grading). Because $\init_{\sigma'}(J)$ and $\init_{\sigma}(J)$ are both initial ideals of $J$, $ \hilb(R/\init_{\sigma'}(J);\mathbf t) = \hilb(R/J;\mathbf t) =\hilb(R/\init_{\sigma}(J);\mathbf t)$.  The equality $\hilb(R/\sqrt{\init_{\sigma'}(J)};\mathbf t) = \hilb(R/\init_{\sigma'}(J);\mathbf t)$ precludes the proper containment $\init_{\sigma'}(J) \subsetneq \sqrt{\init_{\sigma'}(J)}$, and so $\init_{\sigma}(J) = \sqrt{\init_{\sigma'}(J)}=\init_{\sigma'}(J)$.
\end{proof}

\subsection{Applications to Cohen--Macaulayness of unions of matrix Schubert varieties}
\label{subsect:CMApplications}
It is by no means guaranteed that arbitrary equidimensional unions of matrix Schubert varieties will be Cohen--Macaulay (see Section \ref{section:furtherQuestions}).  However, we can use the results in Subsection \ref{subsect:mainProofs} to begin the study of when they are.  Of particular interest are the unions of matrix Schubert varieties that are ASM varieties. 

Given $w_1, \ldots, w_r \in S_n$, $J = \bigcap\{I_{w_i} : i \in [r] \}$, and $(a,b)$ a maximally southeast cell among $\bigcup\{D(w_i): i \in [r]\}$, we will first observe that, whenever $R/\init_{z_{a,b}}(J)$ is reduced, the geometric vertex decomposition of $J$ with respect to $z_{a,b}$ may be computed by taking the geometric vertex decomposition separately at each of the matrix Schubert varieties occurring as irreducible components of $\spec(R/J)$.  When there is a $z_{a,b}$-compatible term order $\sigma$ with respect to which $\spec(R/\init_\sigma(J))$ is reduced, we will use this observation to describe classes of Cohen--Macaulay unions of matrix Schubert varieties. (We include varieties whose products with affine factors are isomorphic to matrix Schubert varieties, as in those of the form $\spec(R/(N_{z_{a,b},I_w}+(z_{a,b})))$, in these unions.)   

In this section, we will be studying the ideal $\init_{z_{a,b}}(J)$ itself, rather than merely components and multiplicities.  For this purpose, we begin with a couple of lemmas.

\begin{lemma}\label{lem:linearJ}
Fix $w_1, \ldots, w_r \in S_n$, and set 
$J=\bigcap \{I_{w_i}:i\in[r]\}$.  Fix a maximally southeast cell $(a,b)$ among elements of 
$ \bigcup\{ D(w_i):i \in [r]\}$.
Then $J$ is linear in $z_{a,b}$.
\end{lemma}
\begin{proof}
We will proceed by induction on $r$.  If $r=1$, then $J$ is linear in $z_{a,b}$ in virtue of the Fulton generators.  For $r>1$, suppose that $z_{a,b}$ is involved in the Fulton generators of the $I_{w_i}$ for $i \in [q]$ for some $q \in [r]$ and that $z_{a,b}$ is not involved in the Fulton generators of $I_{w_i}$ for $i \in [q+1, r]$.  For $i \in [q]$, let $\pi_i$ be the bigrassmannian permutation whose unique essential cell is $(a,b)$ and whose rank condition at that cell is $\rk_{\pi_i}(a,b) = \rk_{w_i}(a,b)$.  Let $v_i = w_it_{a, {w_i}^{-1}(b)}$. Because $D(w_i) = D(v_i)\cup \{(a,b)\}$, we have $I_{w_i} = I_{v_i}+I_{\pi_i}$.  Order the $w_i$ so that $\rk_{w_q}(a,b) =  \max \{ \rk_{w_i}(a,b) :  i \in [q] \}$, in which case $I_{\pi_q} \subseteq I_{w_i}$ for each $i \in [q]$.
Set 
\[K' = \displaystyle \bigcap_{i \in [q-1]} I_{w_i}\] and \[
K =   \displaystyle \bigcap_{i \in [q]} I_{w_i} = K' \cap I_{w_q} = K' \cap (I_{v_q}+I_{\pi_q}).
\]
Because $I_{\pi_q} \subseteq K'$, we may apply the modular law to see \[
K' \cap (I_{v_q}+I_{\pi_q}) = I_{\pi_q} +(K' \cap I_{v_q}).
\]  By induction, $K'$ is linear in $z_{a,b}$.  Let $\sigma$ be any $z_{a,b}$-compatible term order.  Then the reduced Gr\"obner basis for $K'$ with respect to $\sigma$ is also linear in $z_{a,b}$.  Note that $I_{v_q}$ has a generating set that does not involve $z_{a,b}$, and so no product of generators of $K'$ and $I_{v_q}$ has any term divisible by $z_{a,b}^2$, which is to say that $K'I_{v_q}$ is linear in $z_{a,b}$.  
Because
\[
\init_\sigma(K'I_{v_q}) \subseteq \init_\sigma(K' \cap I_{v_q}) \subseteq \sqrt{\init_\sigma(K'I_{v_q})},
\] and $\init_\sigma(K'I_{v_q})$ has no monomial generator divisible by $z_{a,b}^2$, neither can $\init_\sigma(K' \cap I_{v_q})$.  Hence, the reduced Gr\"obner basis for $K' \cap I_{v_q}$ is linear in $z_{a,b}$.  By concatenating the reduced Gr\"obner basis for $K' \cap I_{v_q}$ with the Fulton generators of $I_{\pi_q}$, we obtain a generating set for $K$ that is linear in $z_{a,b}$.  Because 
$L =  \bigcap\{I_{w_i}:i \in [q+1,r]\} $
does not involve $z_{a,b}$, a similar argument to that given above for the intersection of $K'$ and $I_{w_q}$ shows that $J = K \cap L$ is linear in $z_{a,b}$.
\end{proof}

For an intersection $J = \bigcap \{I_{w_i}: i \in [r]\}$ of Schubert determinantal ideals and a maximally southeast cell $(a,b)$ among elements of $\bigcup \{D(w_i):i\in[r]\}$, we record a useful fact about the structure of $N_{z_{a,b},J}$, which can either be proved directly, as done here, or by appealing to standard elimination theory.
\begin{lemma}\label{lem:intersectNs}
Fix $w_1, \ldots, w_r \in S_n$, and set 
$J= \bigcap \{I_{w_i} : i \in [r]\}$.
Fix a maximally southeast cell $(a,b)$ among elements of $\bigcup \{D(w_i):i\in[r]\}$.
Then $N_{z_{a,b},J} = \bigcap\{N_{{z_{a,b}},I_{w_i}}: i \in [r]\}$.
\end{lemma}
\begin{proof} Fix a $z_{a,b}$-compatible term order $\sigma$ and Gr\"obner basis $\mathcal{G}_{J}$ of $J$ with respect to $\sigma$.  From Lemma \ref{lem:linearJ}, we have a geometric vertex decomposition $\init_{z_{a,b}}(J) = C_{z_{a,b},J} \cap (N_{z_{a,b},J}+(z_{a,b}))$. Using \cite[Theorem 2.1(a)]{KMY09}, the elements of $\mathcal{G}_J$ that do not involve $z_{a,b}$ form  a Gr\"obner basis $\mathcal{G}_{N}$ for $N_{z_{a,b},J}$ under $\sigma$.  Each such polynomial is an element of each $I_{w_i}$ that does not involve $z_{a,b}$.  Because $\sigma$ is $z_{a,b}$-compatible, each such element has a remainder of $0$ on division by $N_{z_{a,b},I_{w_i}}$ for all $i \in [r]$.  Thus,
$N_{z_{a,b},J} \subseteq  \bigcap\{ N_{z_{a,b},I_{w_i}}:i \in [r]\}$.
Conversely, because each $N_{z_{a,b},I_{w_i}}$ has a set of generators that does not involve $z_{a,b}$, so too does 
$ \bigcap\{ N_{z_{a,b},I_{w_i}}:i \in [r]\}$.
Each such generator is an element of $J$ that does not involve $z_{a,b}$ and so has a remainder of $0$ on division by $\mathcal{G}_{N}$.  Therefore, 
$ \bigcap\{N_{z_{a,b},I_{w_i}} :i \in [r]\}\subseteq  N_{z_{a,b},J} $ 
as well.
\end{proof}

\begin{notation}\label{not:htN}
Let $w_1, \ldots, w_r \in S_n$ be distinct permutations of the same length, and set 
$J= \bigcap \{I_{w_i}: i \in [r]\}$.
Suppose that $(a,b)$ is a maximally southeast cell among elements of 
$\bigcup \{D(w_i):i\in[r]\}$
and that $z_{a,b}$ is involved in some Fulton generator of $I_{w_i}$ for $i \in [q]$ but not for $i \in [q+1,r]$ for some $q \in [r]$.  
Set $N_{z_{a,b},J}^{\rm ht} =  \bigcap \{N_{{z_{a,b}},w_i}: i \in [q] \}$.  
\end{notation}

\begin{corollary}\label{cor:htNrightht}
 With notation and assumptions as in Notation \ref{not:htN}, $N_{z_{a,b},J}^{\rm ht}$ is the intersection of the minimal primes of $N_{z_{a,b},J}$ of height $\hgt (N_{z_{a,b},J})$. 
\end{corollary}
\begin{proof}
By Lemma \ref{lem:intersectNs}, $N_{z_{a,b},J} = \bigcap\{ N_{z_{a,b},I_{w_i}}:{i \in [r]}\}$.  By Proposition \ref{prop:linkDecomp} and \cite[Proposition 3.3]{Ful92}, each $N_{z_{a,b},I_{w_i}}$ is prime.  By Lemma \ref{lem:NisSchub}, the $N_{z_{a,b},w_i}$ with $i \in [q]$ have height $\ell(w_it_{a,w_i^{-1}(b)}) = \ell(w_i)-1=\hgt (J)-1$.  And $\hgt (J)-1= \hgt (N_{z_{a,b},J})$ by \cite[Lemma 2.8]{KR}. Meanwhile the $N_{z_{a,b},w_i} = I_{w_i}$ with $i \in [q+1,r]$ have height $\hgt (J)$. 
\end{proof}

With notation and assumptions as in Notation \ref{not:htN}, there may exist $i \in [q+1,r]$ so that $I_{w_i}+(z_{a,b})$ is a minimal prime of $N_{z_{a,b},J}+(z_{a,b})$.  However, in this case, $I_{w_i}$ is a minimal prime of $C_{z_{a,b},J}$, and so $I_{w_i}+(z_{a,b})$ is redundant in the primary decomposition of $\init_{z_{a,b}}(J)$.  Thus, 
\[\init_{z_{a,b}}(J) = C_{z_{a,b},J} \cap (N_{z_{a,b},J}+(z_{a,b})) = C_{z_{a,b},J} \cap (N_{z_{a,b},J}^{\rm ht}+(z_{a,b})).\]

\begin{corollary}\label{cor:CM}
Let $w_1, \ldots, w_r \in S_n$ be permutations (not necessarily of the same length), and set $J= \bigcap \{I_{w_i}:i\in[r]\}$.
Fix a maximally southeast cell $(a,b)$ among elements of $\bigcup \{D(w_i):i\in[r]\}$.
If $\init_{z_{a,b}} (J)$ is radical, then \[
\init_{z_{a,b}} (J) = \init_{z_{a,b}}(I_{w_1}) \cap \cdots \cap \init_{z_{a,b}}(I_{w_r}).
\]  Moreover, if $\spec(R/J)$ is Cohen--Macaulay and there exists a $z_{a,b}$-compatible term order $\sigma$ for which $\init_\sigma(J)$ is radical, then $\spec(R/\init_{z_{a,b}}(J))$ is also Cohen--Macaulay.
\end{corollary}
\begin{proof}
Because $\init_{z_{a,b}}(J)$ is radical, the containments \[
\init_{z_{a,b}}(J) \subseteq \init_{z_{a,b}}(I_{w_1}) \cap \cdots \cap \init_{z_{a,b}}(I_{w_r}) \subseteq \sqrt{\init_{z_{a,b}}(J)}
\] imply the equality \[
\init_{z_{a,b}}(J) = \init_{z_{a,b}}(I_{w_1}) \cap \cdots \cap \init_{z_{a,b}}(I_{w_r}).
\]  

Suppose that $\spec(R/J)$ is Cohen--Macaulay and that there is a $z_{a,b}$-compatible term order $\sigma$ for which $\init_\sigma(J)$ is radical.  By \cite[Corollary 2.11(iii)]{CV20}, we know $\spec(R/\init_\sigma(J))$ is Cohen--Macaulay.  Then, because $\spec(R/\init_\sigma(J))$ is a Cohen--Macaulay initial scheme of $\spec(R/\init_{z_{a,b}}(J))$, $\spec(R/\init_{z_{a,b}}(J))$ must be Cohen--Macaulay, as well.  
\end{proof}

With notation and assumptions as in Corollary \ref{cor:CM}, note that Proposition \ref{prop:linkDecomp} allows us to express $\spec(R/\init_{z_{a,b}}(J))$ as a union of matrix Schubert varieties (up to affine factors).
Indeed, working with their defining ideals, we have \begin{align*}
\init_{z_{a,b}}(J) &= \init_{z_{a,b}}(I_{w_1}) \cap \cdots \cap \init_{z_{a,b}}(I_{w_r})\\
&= C_{z_{a,b},I_{w_1}} \cap (N_{z_{a,b},I_{w_1}}+(z_{a,b})) \cap \cdots \cap C_{z_{a,b},I_{w_r}} \cap (N_{z_{a,b},I_{w_r}}+(z_{a,b}))\\
&=\bigcap_{i \in [r]} \left(\left(\bigcap_{u \in \Phi(w_i,z_{a,b})}  I_{u} \right)\cap (N_{z_{a,b},I_{w_i}}+(z_{a,b})) \right).
\end{align*}  

If $\spec(R/\init_{z_{a,b}}(J))$ is not reduced but $\spec(R/N_{z_{a,b},J})$ is known to be Cohen--Macaulay, we will also be able to infer the Cohen--Macaulayness of $\spec(R/\init_{z_{a,b}}(J))$.  

\begin{corollary}\label{cor:nonRadicalCM}
Let $w_1, \ldots, w_r \in S_n$ be permutations of the same  length, and set  
$J= \bigcap \{I_{w_i}:i\in[r]\}$.
Fix a maximally southeast cell $(a,b)$ among elements of 
$\bigcup \{D(w_i):i\in[r]\}$.
If $\spec(R/J)$ and $\spec(R/N_{z_{a,b},J})$ are Cohen--Macaulay, then $\spec(R/\init_{z_{a,b}}(J))$ is Cohen--Macaulay.  If additionally $C_{z_{a,b},J} \neq R$, then $\spec(R/C_{z_{a,b},J})$ is Cohen--Macaulay as well.  In particular, given any $w \in S_n$ and any lower outside corner $(a,b)$ of $D(w)$ with $\rk_w(a,b) \geq 1$,  both $\spec(R/\init_{z_{a,b}}(I_w))$ and $\spec(R/C_{z_{a,b},I_w})$ are Cohen--Macaulay.
\end{corollary}
\begin{proof}
Whenever $\rk_{w_i}(a,b) = 0$ for all $i \in [r]$, then $C_{z_{a,b},J} = R$ and $\init_{z_{a,b}}(J) = N_{z_{a,b},J}+(z_{a,b})$, and so the result is obvious.  If $\rk_{w_i}(a,b) \geq 1$ for some $i \in [r]$, then $C_{z_{a,b},J} \neq R$.  Without loss of generality, suppose $\rk_{w_1}(a,b)\geq 1$.  In that case, the reduced Gr\"obner basis for $J$ with respect to any $z_{a,b}$-compatible term order involves $z_{a,b}$.  Otherwise, we would have 
$J = N_{z_{a,b},J} =  \bigcap\{ N_{z_{a,b},I_{w_i}}:i \in [r]\}$
by Lemma \ref{lem:intersectNs}, and so $\hgt (J) \leq \hgt (N_{z_{a,b},I_{w_1}})$ while $\hgt (J) = \hgt (I_{w_1}) = \hgt (N_{z_{a,b},I_{w_1}})+1$ by Proposition \ref{prop:linkDecomp}.  Therefore, $\sqrt{C_{z_{a,b},J}} \neq \sqrt{N_{z_{a,b},J}}$ by \cite[Proposition 2.4]{KR}.  By Lemma \ref{lem:linearJ}, we know that $\init_{z_{a,b}}(J) = C_{z_{a,b},J} \cap (N_{z_{a,b},J}+(z_{a,b}))$, and so the Cohen--Macaulayness of $\spec(R/\init_{z_{a,b}}(J))$ and $\spec(R/C_{z_{a,b},J})$ follow directly from \cite[Corollary 4.11]{KR}.  

In all cases, the final statement follows from the others together with Proposition \ref{prop:linkDecomp} and \cite[Proposition~3.3(d)]{Ful92}.
\end{proof}

Fix $w \in S_n$ and a lower outside corner $(a,b)$ of $D(w)$. By Proposition \ref{prop:linkDecomp}, 
$C_{z_{a,b},I_w} =  \bigcap\{ I_{u}:u \in \Phi(w,z_{a,b})\}$
defines a union of matrix Schubert varieties that is, in particular, an ASM variety, and so Corollary \ref{cor:nonRadicalCM} gives a source of Cohen--Macaulay ASM varieties.  

We can also use this approach to study ASM varieties that are not equidimensional, and hence fail to be Cohen--Macaulay.  In the study of nonpure simplicial complexes, Stanley introduced \emph{sequentially Cohen--Macaulay} varieties.  For background on the sequentially Cohen--Macaulay property, we refer the reader to \cite[Section III.2]{Sta96}. An equidimensional variety is Cohen--Macaulay if and only if it is sequentially Cohen--Macaulay, just as a pure simplicial complex is shellable in the traditional sense if and only if it is shellable in the nonpure sense of \cite{BW96}.

\begin{corollary}\label{cor:seqCM}
Let $w_1, \ldots, w_r \in S_n$ be permutations (not necessarily of the same  length), and set 
$J= \bigcap \{I_{w_i}:i\in[r]\}$.
Fix a maximally southeast cell $(a,b)$ among elements of $\bigcup \{D(w_i):i\in[r]\}$.
Suppose that no minimal prime of $C_{z_{a,b},J}$ is a minimal prime of $N_{z_{a,b},J}$.  If $\spec(R/J)$ is sequentially Cohen--Macaulay and $\spec(R/N_{z_{a,b},J})$ is Cohen--Macaulay, then $\spec(R/\init_{z_{a,b}}(J))$ is sequentially Cohen--Macaulay.  If $C_{z_{a,b},J} \neq R$, then $\spec(R/C_{z_{a,b},J})$ is sequentially Cohen--Macaulay as well.  
\end{corollary}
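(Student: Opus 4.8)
The plan is to follow the proof of Corollary~\ref{cor:nonRadicalCM} structurally, replacing its Cohen--Macaulay inputs by their sequentially Cohen--Macaulay counterparts and supplying, as the one genuinely new ingredient, a sequentially Cohen--Macaulay analog of the geometric vertex decomposition results of \cite{KR}. First I would dispose of the degenerate case. If $\rk_{w_i}(a,b)=0$ for every $i$, then, since $\rk_{w_i}(a,b)=0$ forces $(a,b)\in D(w_i)$ and hence $y=z_{a,b}\in I_1(Z_{[a],[b]})\subseteq I_{w_i}$, we get $y\in J$, so $C_{y,J}=R$ and $\init_y(J)=N_{y,J}+(y)$. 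By Lemmas~\ref{lem:NisSchub} and~\ref{lem:intersectNs}, $N_{y,J}$ is generated by polynomials not involving $y$ (here we use that $(a,b)$ was chosen maximally southeast among $\bigcup_i D(w_i)$), so $\bar y$ is a nonzerodivisor on $R/N_{y,J}$; thus $R/(N_{y,J}+(y))$ is Cohen--Macaulay of dimension one less and in particular sequentially Cohen--Macaulay, and the assertion about $C_{y,J}$ is vacuous.

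Now assume $\rk_{w_i}(a,b)\geq 1$ for some $i$, so that $y\notin J$ and $C_{y,J}\neq R$; by Lemma~\ref{lem:linearJ} we have the geometric vertex decomposition $\init_y(J)=C_{y,J}\cap(N_{y,J}+(y))$, and, exactly as in Corollary~\ref{cor:nonRadicalCM}, the reduced Gr\"obner basis of $J$ with respect to a $y$-compatible term order involves $y$. Let $h_{\min}<h_2<\cdots<h_s$ be the Coxeter lengths occurring among $w_1,\dots,w_r$, and set $J^{(k)}=\bigcap_{\ell(w_i)=h_k}I_{w_i}$, an equidimensional intersection of matrix Schubert varieties, so that $J=\bigcap_k J^{(k)}$. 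Reading off the dimension filtration of the sequentially Cohen--Macaulay module $R/J$ shows that $R/J^{(1)}$ is Cohen--Macaulay and that $R/\bigcap_{k\geq m}J^{(k)}$ is sequentially Cohen--Macaulay for every $m$. I would then induct on $s$: apply Corollary~\ref{cor:nonRadicalCM} to the equidimensional $J^{(1)}$ and apply the inductive hypothesis to $\bigcap_{k\geq 2}J^{(k)}$, and then reassemble, using Lemma~\ref{lem:intersectNs}, Lemma~\ref{lem:multSplit}, and Proposition~\ref{prop:linkDecomp} to check that the geometric vertex decomposition of $J$ restricts layer-by-layer, so that dimension filtrations for $R/C_{y,J}$ and $R/\init_y(J)$ can be built out of those of the layers with Cohen--Macaulay quotients. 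The statement for $C_{y,J}$ is then the same argument with the $N$-factor stripped away.

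The hard part will be the bookkeeping in this inductive step. The obstacle is that the machinery of \cite{KR} powering Corollary~\ref{cor:nonRadicalCM} (notably \cite[Proposition~2.4]{KR} and \cite[Corollary~4.11]{KR}) is stated for equidimensional, Cohen--Macaulay ideals, so it cannot be fed $J$ or $N_{y,J}$ directly; one must peel the problem into equidimensional layers and then show the layers recombine. I expect the two delicate verifications to be: (i) that $R/N_{y,J}$ being Cohen--Macaulay forces each relevant layer of $N_{y,J}$ to be Cohen--Macaulay of the expected codimension --- this is precisely where the hypothesis that no minimal prime of $C_{y,J}$ is a minimal prime of $N_{y,J}$ is used, as it rules out a height-$\hgt J$ component of $N_{y,J}$ (coming from some $w_i$ whose Fulton generators omit $y$) coinciding with a component of $C_{y,J}$; and (ii) that the filtration one constructs on $R/\init_y(J)$ really does have Cohen--Macaulay quotients, which amounts to the general principle --- to be proved layer-by-layer from Corollary~\ref{cor:nonRadicalCM} --- that a geometric vertex decomposition with $\sqrt{C_{y,J}}\neq\sqrt{N_{y,J}}$ and no shared minimal primes transfers sequential Cohen--Macaulayness from $R/J$ to both $R/\init_y(J)$ and $R/C_{y,J}$. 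Once (i) and (ii) are in place, the conclusion follows.
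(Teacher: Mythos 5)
There is a genuine gap, and it sits exactly where you flag ``the hard part.'' The paper's proof of this corollary is a direct application of \cite[Theorem~7.1]{KR}, which is the sequentially Cohen--Macaulay analogue of the liaison machinery behind Corollary~\ref{cor:nonRadicalCM}: under the hypotheses $\sqrt{C_{y,J}}\neq\sqrt{N_{y,J}}$ and $R/N_{y,J}$ Cohen--Macaulay, that theorem transfers sequential Cohen--Macaulayness between $R/J$ (equivalently $R/\init_y(J)$, via the backward direction applied to $I=\init_y(J)$) and $R/C_{y,J}$. Your ``general principle --- to be proved layer-by-layer from Corollary~\ref{cor:nonRadicalCM}'' \emph{is} that theorem, so your proposal defers the entire content of the corollary to a step you have not carried out, and the route you sketch toward it does not go through.

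Concretely, three links in the layer-by-layer reduction fail. First, sequential Cohen--Macaulayness of $R/J$ does not yield what you claim about the layers: writing $J^{(k)}$ for the intersection of the $I_{w_i}$ of the $k$-th Coxeter length, the dimension filtration of $R/J$ has submodules $\bigl(\bigcap_{k<j}J^{(k)}\bigr)/J$, so its quotients are $R/J^{(1)}$ at the top and, for $j>1$, modules $\bigl(\bigcap_{k<j}J^{(k)}\bigr)/\bigl(\bigcap_{k\leq j}J^{(k)}\bigr)$, which are proper ideals of $R/J^{(j)}$ rather than $R/J^{(j)}$ itself; in particular neither ``$R/J^{(k)}$ is Cohen--Macaulay for all $k$'' nor ``$R/\bigcap_{k\geq m}J^{(k)}$ is sequentially Cohen--Macaulay'' can simply be read off. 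Second, to invoke Corollary~\ref{cor:nonRadicalCM} on a single layer $J^{(k)}$ you need $R/N_{y,J^{(k)}}$ to be Cohen--Macaulay, but the hypothesis only gives Cohen--Macaulayness of $R/N_{y,J}=R/\bigcap_i N_{y,I_{w_i}}$ (Lemma~\ref{lem:intersectNs}), which does not descend to the individual layers. Third, even granting Cohen--Macaulay layers for $\init_y(J)$ and $C_{y,J}$, sequential Cohen--Macaulayness requires that these layers assemble into the dimension filtration with Cohen--Macaulay quotients, which is again not automatic from equidimensional pieces being Cohen--Macaulay. The clean fix is to abandon the layering and cite \cite[Theorem~7.1]{KR} directly, using your (correct) observations that the no-shared-minimal-primes hypothesis forces $\sqrt{C_{y,J}}\neq\sqrt{N_{y,J}}$ and that Lemma~\ref{lem:linearJ} supplies the geometric vertex decomposition.
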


\begin{proof}
The case $\rk_{w_i}(a,b) = 0$ for all $i \in [r]$ is the same as in Corollary \ref{cor:nonRadicalCM}. If $\rk_{w_i}(a,b) \geq 1$ for some $i \in [r]$, then $C_{z_{a,b},J} \neq R$.  The condition that no minimal prime of $C_{z_{a,b},J}$ be a minimal prime of $N_{z_{a,b},J}$ implies that $\sqrt{C_{z_{a,b},J}} \neq \sqrt{N_{z_{a,b},J}}$.  Again, Lemma \ref{lem:linearJ} implies that $\init_{z_{a,b}}(J) = C_{z_{a,b},J} \cap (N_{z_{a,b},J}+(z_{a,b}))$.  The result now follows from \cite[Theorem 7.1]{KR} (by a direct application of the forward direction for $C_{z_{a,b},J}$ and an application of the backward direction for $\init_{z_{a,b}}(J)$ with $I = \init_{z_{a,b}}(J)$).
\end{proof}

\section{Consequences for $\beta$-double Grothendieck transition recurrences}
\label{section:additionalConsequences}

Lascoux and Sch\"utzenberger \cite{LS85}  introduced a recurrence on Schubert polynomials called \emph{transition}.  These transition equations imply that each Schubert polynomial expands as a positive sum of monomials.  This fact was subsequently reproved by the introduction of combinatorial formulas for these coefficients (e.g., \cite{BJS93,FS94}), as well as through  geometric arguments (e.g., \cite{KM00,Kog00,BS02,KM05}). Analogous transition equations were also given for double Schubert polynomials  (e.g., \cite{KV97}). Also of interest are transition equations for \notdefterm{Grothendieck polynomials}, which are K-theoretic analogues of Schubert polynomials.  Lascoux \cite{Las01} gave transition equations for single Grothendieck polynomials (see also \cite{Len03}). He also described the double Grothendieck version in \cite{Las}.  All of these formulas follow from specializations of a recurrence on \notdefterm{$\beta$-double Grothendieck polynomials}, which represent classes in connective K-theory \cite{Hud14}. 

An algebraic proof of transition for $\beta$-double Grothendieck polynomials was given in \cite[Appendix~A]{Wei21}.  
The goal of this section is to give a geometrically motivated proof of this recurrence.  
We do so by analyzing the multigraded Hilbert series of matrix Schubert varieties.   The geometric interpretation of transition in Proposition \ref{prop:hilbertAndKPoly} was previously known to Knutson and Yong in unpublished work (see their slides \cite{YSlides}), as was an unpublished proof of Proposition \ref{prop:linkDecomp} that relied on Lascoux's transition formula.

Fix $w\in S_n$, and take a lower outside corner $(a,b)$ of $D(w)$.  As before, let $v=wt_{a,w^{-1}(b)}$.  Recall \[
 \phi(w,z_{a,b})=\{i\in [a-1]:vt_{i,a}>v \text{ and } \ell(vt_{i,a})=\ell(v)+1\}.
 \] Given $U=\{i_1,\ldots,i_k\}$ with $1\leq i_1<i_2<\cdots <i_k<a$, let \[w_U= v (a\, i_k \,i_{k-1}\, \ldots\, i_1)\] (where the second permutation is written in cycle notation).  Note that if $U=\emptyset$, then $w_U=v$.

\begin{lemma}
\label{lemma:reallylattice}
Given $w\in S_n$, let $(a,b)$ be a lower outside corner of $D(w)$.
If $U\subseteq \phi(w,z_{a,b})$ and $U\neq \emptyset$, then $w_U=\vee\{vt_{i,a}:i\in U\}.$
\end{lemma}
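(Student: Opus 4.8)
The plan is to verify directly that the corner sum function of $w_U$ coincides, entry by entry, with the entrywise minimum of the corner sum functions of the permutations $vt_{i,a}$, $i\in U$. Since the $vt_{i,a}$ are permutations (hence ASMs) and since the join in $\asm(n)$ is computed by taking entrywise minima of corner sum functions (extending the two-variable statement in the preliminaries by associativity), this entrywise minimum is exactly $\rk_{\vee\{vt_{i,a}:i\in U\}}$; as an ASM is determined by its corner sum function, the equality $w_U=\vee\{vt_{i,a}:i\in U\}$ will follow. Write $U=\{i_1<i_2<\cdots<i_k\}$ with $k\geq 1$, and recall $v(a)=b$.

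First I would record two facts about $\phi(w,z_{a,b})$. For each $i\in\phi(w,z_{a,b})$ we have $v(i)<b$ and there is no $k'$ with $i<k'<a$ and $v(i)<v(k')<b$ (this is just the unpacking of $vt_{i,a}>v$ and $\ell(vt_{i,a})=\ell(v)+1$, using $v(a)=b$). The second, genuinely combinatorial, fact is that $v(i_1)>v(i_2)>\cdots>v(i_k)$: if $j<j'$ but $v(i_j)<v(i_{j'})$, then $i_{j'}$ would be a position with $i_j<i_{j'}<a$ and $v(i_j)<v(i_{j'})<b=v(a)$, contradicting the first fact applied to $i_j$.

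Next I would compute, for $i\in\phi(w,z_{a,b})$, the identity $\rk_{vt_{i,a}}(p,q)=\rk_v(p,q)-1$ when $i\leq p<a$ and $v(i)\leq q<b$, and $\rk_{vt_{i,a}}(p,q)=\rk_v(p,q)$ otherwise; this is a one-line count, moving the $1$'s of $v$ from rows $i$ and $a$, and it is the exact analogue for $v$, $vt_{i,a}$ of the rank-function identity already used in the proof of Lemma~\ref{lem:NisSchub}. Taking the entrywise minimum over $i\in U$, the join's corner sum function is $\rk_v(p,q)$ minus the indicator that $p<a$, $q<b$, and some $i_j\leq p$ has $v(i_j)\leq q$; because $v(i_1)>\cdots>v(i_k)$, the smallest value $v(i_j)$ with $i_j\leq p$ is $v(i_m)$ where $m=\#\{j:i_j\leq p\}$, so this indicator is exactly $[v(i_m)\leq q<b]$ (and is $0$ when $m=0$ or $p\geq a$). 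Then I would compute $\rk_{w_U}$ directly: $w_U$ agrees with $v$ outside the positions $i_1,\ldots,i_k,a$ and cyclically permutes the values there via $w_U(i_1)=b$, $w_U(i_j)=v(i_{j-1})$ for $j\geq 2$, $w_U(a)=v(i_k)$, so $\rk_{w_U}(p,q)-\rk_v(p,q)$ is an empty or complete telescoping sum equal to $0$ for $p\geq a$, and for $p<a$ telescopes to $[b\leq q]-[v(i_m)\leq q]=-[v(i_m)\leq q<b]$ with $m=\#\{j:i_j\leq p\}$. These two corner sum functions agree termwise, which gives the lemma. The only step requiring any thought is the strict-decrease fact $v(i_1)>\cdots>v(i_k)$, which is what makes the minimum inside the join collapse to a single term; the rest is bookkeeping with telescoping sums.
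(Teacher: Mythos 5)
Your proposal is correct and takes essentially the same route as the paper: both arguments compute $\rk_{vt_{i,a}}-\rk_v$ as the indicator of the rectangle $[i,a-1]\times[v(i),b-1]$, compute $\rk_{w_U}-\rk_v$ from the explicit permutation matrix of $w_U$, and match the entrywise minimum defining the join. The only difference is presentational — you prove the strict decrease $v(i_1)>\cdots>v(i_k)$ from the Bruhat covering condition and organize the $\rk_{w_U}$ computation as a telescoping sum, whereas the paper asserts the decrease and describes the difference directly as the indicator of a union of rectangles.
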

\begin{proof}
Take $U\subseteq \phi(w,z_{a,b})$ so that $U\neq \emptyset$.  
Write $U=\{i_1,\ldots,i_k\}$ where $1\leq i_1<i_2<\cdots < i_k<a$.  Then $b>v(i_1)>v(i_2)>\cdots >v(i_k)$.  We obtain the permutation matrix for $w_U$ from the permutation matrix for $v$ by doing the following:
\begin{enumerate}
\item change the $1$'s in positions $(i_\ell,v(i_\ell))$ to $0$'s for all $\ell \in [k]$,
\item change the $1$ in position $(a,b)$ to a $0$, and
\item place $1$'s in positions $(a,v(i_k)),  (i_k,v(i_{k-1})),\ldots,(i_2,v(i_1)),(i_1,b)$.
\end{enumerate}

\noindent Thus, 
\[\rk_v(c,d)-\rk_{w_U}(c,d)=
\begin{cases}
1& \text{if } (c,d)\in \displaystyle \bigcup_{\ell \in [k]} [i_\ell,a-1]\times [v(i_\ell),b-1], \\
0& \text{otherwise.}
\end{cases}\]
In particular,  given $\ell\in [k]$,
\[\rk_v(c,d)-\rk_{vt_{{i_\ell},a}}(c,d)=
\begin{cases}
1& \text{if } (c,d)\in [i_\ell,a-1]\times[v(i_\ell),b-1], \\
0& \text{otherwise.}
\end{cases}
\]
Thus, \[\rk_v(c,d)-\rk_{w_U}(c,d)=\max\{\rk_v(c,d)-\rk_{v t_{i,a}}(c,d):i\in U\}\] for all $c,d \in [n]$, which implies $\rk_{w_U}(c,d)=\min\{\rk_{v t_{i,a}}(c,d):i\in U\}$ for all $c,d\in [n]$.  Therefore, $w_U=\vee\{vt_{i,a}:i\in U\}$.
\end{proof}

\begin{example}
We continue with $w = 4721653$ and $(a,b) = (5,5)$ as in Example \ref{example:transitionDroop}. Recall that $v = 4721563$ and $\phi(w,z_{55}) = \{1,3,4\}$. Consider $U = \{1,3\} \subseteq \phi(w,z_{55})$, which corresponds to $\{u_1=5721463=vt_{1,5},u_2=4751263=vt_{3,5} \} \subseteq \Phi(w,z_{55})$.  Then $w_U = v(5\,3\,1) = 5741263 = \vee \{u_1,u_2\}$.  The Rothe BPDs for $v$ and $w_U$ are presented below with the region $[1,4] \times [4,4] \cup [3,4] \times [2,4]$, in which $\rk_v$ and $\rk_{w_U}$ differ, highlighted on both BPDs in orange.

\[\begin{tikzpicture}[x=1.5em,y=1.5em]
\node at (3.5, 7.5) {$v=4721563$};
\fill[fill=orange](3,3)rectangle(4,7);
\fill[fill=orange](1,3)rectangle(3,5);
\draw[step=1,gray, thin] (0,0) grid (7,7);
\draw[color=black, thick](0,0)rectangle(7,7);
\draw[thick,rounded corners] (.5,0)--(.5,3.5)--(7,3.5);
\draw[thick,rounded corners] (1.5,0)--(1.5,4.5)--(7,4.5);
\draw[thick,rounded corners] (2.5,0)--(2.5,.5)--(7,.5);
\draw[thick,rounded corners] (3.5,0)--(3.5,6.5)--(7,6.5);
\draw[thick,rounded corners] (4.5,0)--(4.5,2.5)--(7,2.5);
\draw[thick,rounded corners] (5.5,0)--(5.5,1.5)--(7,1.5);
\draw[thick,rounded corners] (6.5,0)--(6.5,5.5)--(7,5.5);
\end{tikzpicture}
\hspace{3cm}
\begin{tikzpicture}[x=1.5em,y=1.5em]
\node at (3.5, 7.5) {$w_U = 5741263$};
\fill[fill=orange](3,3)rectangle(4,7);
\fill[fill=orange](1,3)rectangle(3,5);
\draw[step=1,gray, thin] (0,0) grid (7,7);
\draw[color=black, thick](0,0)rectangle(7,7);
\draw[thick,rounded corners] (.5,0)--(.5,3.5)--(7,3.5);
\draw[thick,rounded corners] (1.5,0)--(1.5,2.5)--(7,2.5);
\draw[thick,rounded corners] (2.5,0)--(2.5,.5)--(7,.5);
\draw[thick,rounded corners] (3.5,0)--(3.5,4.5)--(7,4.5);
\draw[thick,rounded corners] (4.5,0)--(4.5,6.5)--(7,6.5);
\draw[thick,rounded corners] (5.5,0)--(5.5,1.5)--(7,1.5);
\draw[thick,rounded corners] (6.5,0)--(6.5,5.5)--(7,5.5);
\end{tikzpicture}
\]
\end{example}

The following two lemmas are standard exercises.
\begin{lemma}
\label{lemma:singleVarHilb}
Fix a $\mathbb Z^d$ grading on $S=\kappa[z_1,\ldots,z_n]$ and an ideal $I$ of $S$ that is homogeneous with respect to the $\mathbb{Z}^d$ grading.  Fix a variable $z_j$.  If $I$ has a generating set that does not involve $z_{j}$, then 
\[\hilb(S/(I+(z_{j}));\mathbf t)=(1-\mathbf t^{\deg(z_{j})})\cdot \hilb(S/I;\mathbf t).\] 
\end{lemma}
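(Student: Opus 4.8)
The plan is to exploit the fact that, since $I$ has a generating set not involving $y$, the variable $y$ is a nonzerodivisor on $S/I$; this is the crux of the argument. To see this, suppose $yf \in I$ for some $f \in S$. Writing $f = \sum_{i} \alpha_i y^i$ with $\alpha_i \in \kappa[z_1,\ldots,\widehat{y},\ldots,z_n]$, we have $yf = \sum_i \alpha_i y^{i+1} \in I$. Because $I$ is generated by polynomials not involving $y$, each graded piece of $yf$ in the $y$-grading (that is, the polynomial collecting terms with a fixed power of $y$) lies in $I$; hence $\alpha_i y^{i+1} \in I$ for each $i$, and then $\alpha_i y^i \in I$ for each $i$ since again $I$ is $y$-free (divide through: if $g y^{k} \in I$ with $g$ not involving $y$ and $k \geq 1$, reduce $gy^k$ by the $y$-free generators of $I$, obtaining a standard expression all of whose terms are divisible by $y^k$, so $g \in I$ and thus $g y^{k-1} \in I$). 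Therefore $f \in I$, proving that $y$ is a nonzerodivisor on $S/I$.

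Given that $y$ is a nonzerodivisor on $S/I$, I would invoke the short exact sequence of $\mathbb{Z}^d$-graded $S$-modules
\[
0 \longrightarrow (S/I)(-\deg y) \xrightarrow{\;\cdot y\;} S/I \longrightarrow S/(I+(y)) \longrightarrow 0,
\]
where $(S/I)(-\deg y)$ denotes the graded module $S/I$ with grading shifted by $\deg(y)$, so that multiplication by $y$ is a degree-$0$ (hence graded) injection. Here I am using that $S/(I+(y)) \cong (S/I)/y(S/I)$ as graded modules. Since Hilbert series are additive on short exact sequences of $\mathbb{Z}^d$-graded modules with finite-dimensional graded pieces (which holds here under the standing half-space hypothesis on the degrees of the $z_i$, recalled in Subsection~\ref{subsect:hilb}), and since $\hilb((S/I)(-\deg y);\mathbf t) = \mathbf t^{\deg(y)} \cdot \hilb(S/I;\mathbf t)$, we obtain
\[
\hilb(S/(I+(y));\mathbf t) = \hilb(S/I;\mathbf t) - \mathbf t^{\deg(y)}\cdot \hilb(S/I;\mathbf t) = (1-\mathbf t^{\deg(y)})\cdot \hilb(S/I;\mathbf t),
\]
as desired.

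The only genuine obstacle is the verification that $y$ is a nonzerodivisor on $S/I$, i.e., the reduction argument in the first paragraph; everything else is the standard Hilbert-series bookkeeping for a short exact sequence. One clean way to package the nonzerodivisor claim without manual $y$-degree manipulations is to pick any $y$-compatible term order (for instance, lexicographic order with $y$ largest, refined arbitrarily) and note that the hypothesis gives $\init(I) = \init(\init_y(I))$ with $\init_y(I)$ not involving $y$, so $\init(I)$ has no minimal generator divisible by $y$; hence $y \notin \mathcal{P}$ for any associated prime $\mathcal{P}$ of $\init(I)$, and since the associated primes of $\init(I)$ include (in fact refine to) those controlling zerodivisors and $S/I$, $S/\init(I)$ have the same Hilbert function, one concludes $y$ is a nonzerodivisor modulo $I$. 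Either route works; I would present the direct reduction argument for self-containedness, as it does not require introducing a term order.
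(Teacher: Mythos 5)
Your proof is correct. The paper offers no argument here at all---the lemma is dismissed as ``a standard exercise''---and your write-up is exactly the standard solution: the hypothesis that $I$ is generated by $y$-free elements makes $y$ a nonzerodivisor on $S/I$ (your $y$-degree bookkeeping is sound, and amounts to the observation that $S/I \cong (S'/I_0)[y]$ where $S' = \kappa[z_1,\ldots,\widehat{y},\ldots,z_n]$ and $I_0$ is the ideal of $S'$ generated by the same generators), after which the graded short exact sequence $0 \to (S/I)(-\deg y) \xrightarrow{\cdot y} S/I \to S/(I+(y)) \to 0$ and additivity of Hilbert series give the stated identity.
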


\begin{lemma} \label{lemma:hilbertIntersection}
Fix a $\mathbb Z^d$ grading on $S=\kappa[z_1,\ldots,z_n]$.  Suppose that $I_1,\ldots,I_r$ are monomial ideals of $S$.  If $J=\bigcap\{ I_{i}:i \in [r]\}$,
then
\[\hilb(S/J;\mathbf t)=\sum_{\emptyset \neq U\subseteq [r]} (-1)^{\#U-1}\hilb\left (S/\sum_{i\in U}I_i;\mathbf t\right ). \]
\end{lemma}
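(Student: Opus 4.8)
The plan is to prove this by inclusion-exclusion on the level of graded vector spaces, degree by degree. The key observation is that for a fixed multidegree $\mathbf{a} \in \mathbb{Z}^d$, the graded piece $(S/J)_\mathbf{a}$ is the cokernel of the inclusion $J_\mathbf{a} \hookrightarrow S_\mathbf{a}$, so it suffices to establish the numerical identity
\[
\dim_{\kappa\text{-vect}}((S/J)_\mathbf{a}) = \sum_{\emptyset \neq U \subseteq [r]} (-1)^{\#U - 1} \dim_{\kappa\text{-vect}}\left( \left(S/\textstyle\sum_{i \in U} I_i\right)_\mathbf{a} \right)
\]
for every $\mathbf{a}$, and then multiply by $\mathbf{t}^\mathbf{a}$ and sum. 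Since each $\dim_{\kappa\text{-vect}}(S_\mathbf{a})$ is finite (as follows from the standing assumptions on the grading in Subsection~\ref{subsect:hilb}), all quantities in sight are finite, and it is enough to work one graded piece at a time.

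First I would reduce the numerical identity to a statement purely about finite-dimensional subspaces of the vector space $V := S_\mathbf{a}$. Writing $W_i := (I_i)_\mathbf{a} \subseteq V$, the ideals being homogeneous guarantees that $W_i$ is a genuine subspace, that $\left(\sum_{i \in U} I_i\right)_\mathbf{a} = \sum_{i \in U} W_i$, and that $J_\mathbf{a} = \bigcap_{i \in [r]} W_i$. So the claim becomes the inclusion-exclusion formula
\[
\dim\left( V \Big/ \bigcap_{i \in [r]} W_i \right) = \sum_{\emptyset \neq U \subseteq [r]} (-1)^{\#U-1} \dim\left( V \Big/ \textstyle\sum_{i \in U} W_i \right)
\]
for finitely many subspaces $W_1, \ldots, W_r$ of a finite-dimensional vector space $V$. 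Equivalently, passing to the quotient spaces $\bar{W}_i := W_i / \bigcap_j W_j$ inside $\bar V := V / \bigcap_j W_j$, one reduces to the case $\bigcap_i W_i = 0$, where the identity reads $\dim \bar V = \sum_{\emptyset \neq U} (-1)^{\#U - 1} \dim(\bar V / \sum_{i \in U} \bar W_i)$.

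Then I would prove this linear-algebra identity. The cleanest route is duality: applying $\operatorname{Hom}_\kappa(-,\kappa)$ turns the lattice of subspaces of $\bar V$ upside down, sending $\sum_{i \in U} \bar W_i$ to $\bigcap_{i \in U} \bar W_i^\perp$ and sending $\bar V / (\sum_{i\in U}\bar W_i)$ to the subspace $\bigcap_{i\in U} \bar W_i^\perp \subseteq \bar V^*$, with dimensions preserved. Writing $U_i := \bar W_i^\perp$, the identity to prove becomes $\dim\left(\sum_{i \in [r]} U_i\right) = \sum_{\emptyset \neq U \subseteq [r]} (-1)^{\#U - 1} \dim\left( \bigcap_{i \in U} U_i \right)$, which is the standard inclusion-exclusion principle for the dimension of a sum of subspaces. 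This last fact I would prove by a routine induction on $r$ using the modular law $\dim(A + B) = \dim A + \dim B - \dim(A \cap B)$ and distributivity over intersection where it applies, or simply cite it as well known. The main (and really only) obstacle is bookkeeping care in the duality step to make sure the poset of subspaces is inverted correctly and the alternating signs land where claimed; once the finite-dimensional reduction is in place, there is no substantive difficulty. Finally, multiplying the degree-$\mathbf{a}$ identity by $\mathbf{t}^\mathbf{a}$ and summing over all $\mathbf{a} \in \mathbb{Z}^d$ yields the asserted identity of Hilbert series in $\mathbb{Z}((t_1,\ldots,t_d))$.
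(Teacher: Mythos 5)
Your reduction is carried out correctly and is in fact illuminating: working one multidegree at a time with $W_i=(I_i)_{\mathbf a}\subseteq V=S_{\mathbf a}$ and dualizing, the lemma is equivalent, degree by degree, to the identity
\[
\dim\Bigl(\sum_{i\in[r]}U_i\Bigr)=\sum_{\emptyset\neq U\subseteq[r]}(-1)^{\#U-1}\dim\Bigl(\bigcap_{i\in U}U_i\Bigr)
\]
for subspaces $U_1,\dots,U_r$ of a finite-dimensional vector space. The gap is that this ``standard inclusion--exclusion principle'' is not a theorem once $r\geq 3$: for three distinct lines $U_1,U_2,U_3$ through the origin of $\kappa^2$, the left side is $2$ while the right side is $1+1+1-0-0-0+0=3$. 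The lattice of subspaces is modular but not distributive, and the induction you sketch needs $(A+B)\cap C=(A\cap C)+(B\cap C)$; your phrase ``distributivity over intersection where it applies'' is exactly the step with no justification. Translating the counterexample back through your (correct) reduction, the ideals $I_1=(x)$, $I_2=(y)$, $I_3=(x+y)$ of $\kappa[x,y]$ violate the displayed formula of the lemma already in degree $1$, where $\dim(S/J)_1=2$ but the alternating sum gives $3$. So no argument can close this gap in the stated generality: the identity requires the subspaces $(I_i)_{\mathbf a}$ to generate a distributive lattice in each degree, as happens for monomial ideals (where everything is spanned by subsets of a fixed basis and the identity reduces to inclusion--exclusion of sets) and, via Lemma \ref{lemma:asmIdealFacts}, for the families of Schubert determinantal ideals to which the lemma is applied in Proposition \ref{prop:hilbertAndKPoly}.

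For comparison, the paper argues by induction on $r$ using the short exact sequence $0\to S/(K\cap I_r)\to S/K\oplus S/I_r\to S/(K+I_r)\to 0$ with $K=\bigcap_{i\in[r-1]}I_i$, and then passes to $T=S/I_r$; the step there that plays the role of your distributivity claim is the assertion $KT=I_1T\cap\cdots\cap I_{r-1}T$, which is the same distributivity statement in disguise and fails for the same three ideals. So your route through duality is formally different from the paper's Mayer--Vietoris induction, but both pivot on an identical unproven (and in general false) hypothesis. A repaired proof in the setting where the lemma is actually used should add and verify that hypothesis, for instance by degenerating to compatible squarefree monomial initial ideals and applying set-theoretic inclusion--exclusion to standard monomials.
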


We now show that multigraded Hilbert series and K-polynomials of Schubert determinantal ideals satisfy a transition recurrence.  Our proof follows from Proposition~\ref{prop:linkDecomp}, which gave a recurrence on unions of matrix Schubert varieties.
\begin{proposition}
\label{prop:hilbertAndKPoly}
Fix $w\in S_n$, and let $(a,b)$ be a lower outside corner of $D(w)$.  Let $\deg(z_{i,j}) = e_i+e_{n+j}$, in which case $(\mathbf{x,y})^{\deg(z_{i,j})} = x_iy_j$.  Then the following hold:
\begin{thmlist}
    \item\label{proppart:hilbRecursion} \[\hilb(R/I_w;\mathbf x,\mathbf y)=\left(1-x_ay_b\right)\hilb(R/I_v;\mathbf x,\mathbf y)+x_ay_b\sum_{\emptyset \neq U\subseteq \phi(w,z_{a,b})}(-1)^{\#U-1}\hilb(R/I_{w_U};\mathbf x,\mathbf y).\]
    \item \label{proppart:KpolyRecursion} \[\mathcal K(R/I_w;\mathbf x,\mathbf y)=\left(1-x_ay_b\right)\mathcal K(R/I_v;\mathbf x,\mathbf y)+x_ay_b\sum_{\emptyset \neq U\subseteq \phi(w,z_{a,b})}(-1)^{\#U-1}\mathcal K(R/I_{w_U};\mathbf x,\mathbf y).\]
\end{thmlist}
\end{proposition}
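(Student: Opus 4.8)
The plan is to prove part \ref{proppart:hilbRecursion} first and then deduce part \ref{proppart:KpolyRecursion} immediately by clearing denominators. For \ref{proppart:hilbRecursion}, the starting point is the geometric vertex decomposition of $I_w$ at $y = z_{a,b}$ furnished by Lemma \ref{lem:linearJ} (or directly by the Fulton generators, which are linear in $y$), namely $\init_y(I_w) = C_{y,I_w} \cap (N_{y,I_w}+(y))$. Since a Gröbner degeneration is flat, $\hilb(R/I_w;\mathbf x,\mathbf y) = \hilb(R/\init_y(I_w);\mathbf x,\mathbf y)$ with respect to the $\mathbb Z^{2n}$ grading (note that $\init_y(I_w)$ is homogeneous for this grading because $y = z_{a,b}$ is homogeneous). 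By Proposition \ref{prop:linkDecomp}, $N_{y,I_w} = I_v$ and $C_{y,I_w} = \bigcap_{u \in \Phi(w,y)} I_u$.

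The core computation is to apply the Mayer--Vietoris-type short exact sequence
\[
0 \to R/\bigl(C_{y,I_w} \cap (N_{y,I_w}+(y))\bigr) \to R/C_{y,I_w} \oplus R/(N_{y,I_w}+(y)) \to R/\bigl(C_{y,I_w}+N_{y,I_w}+(y)\bigr) \to 0,
\]
which gives
\[
\hilb(R/\init_y(I_w)) = \hilb(R/C_{y,I_w}) + \hilb(R/(I_v+(y))) - \hilb(R/(C_{y,I_w}+I_v+(y))).
\]
Now $C_{y,I_w} \supseteq I_v$ (since $\Phi(w,y)$ consists of permutations above $v$, so $I_u \subseteq I_v$ is false --- rather $I_u \supseteq$? let me be careful: each $u > v$ so $I_u \subseteq I_v$; hence $C_{y,I_w} = \bigcap I_u \subseteq I_v$), so actually $C_{y,I_w}+I_v+(y) = C_{y,I_w}+(y)$. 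Since both $I_v$ and $C_{y,I_w}$ have generating sets not involving $y$ (by Proposition \ref{prop:naturalGensGrobner} and Corollary \ref{cor:formOfC}), Lemma \ref{lemma:singleVarHilb} applies to each, yielding $\hilb(R/(I_v+(y))) = (1-x_ay_b)\hilb(R/I_v)$ and $\hilb(R/(C_{y,I_w}+(y))) = (1-x_ay_b)\hilb(R/C_{y,I_w})$. Substituting,
\[
\hilb(R/I_w) = \hilb(R/C_{y,I_w}) + (1-x_ay_b)\hilb(R/I_v) - (1-x_ay_b)\hilb(R/C_{y,I_w}) = (1-x_ay_b)\hilb(R/I_v) + x_ay_b\,\hilb(R/C_{y,I_w}).
\]
It remains to expand $\hilb(R/C_{y,I_w}) = \hilb\bigl(R/\bigcap_{u \in \Phi(w,y)} I_u\bigr)$ using the inclusion--exclusion formula of Lemma \ref{lemma:hilbertIntersection}: for $\emptyset \neq U \subseteq \phi(w,z_{a,b})$, the sum $\sum_{i \in U} I_{vt_{i,a}} = I_{\vee\{vt_{i,a}:i\in U\}} = I_{w_U}$ by Lemma \ref{lemma:asmIdealFacts}\ref{lempart:addIwi} together with Lemma \ref{lemma:reallylattice}. (When $\rk_w(a,b)=0$, $\Phi(w,y)=\emptyset$ and $C_{y,I_w}=R$, so the sum is empty and the formula still holds since $\hilb(R/I_w) = (1-x_ay_b)\hilb(R/I_v)$, which one checks directly since then $I_w = I_v+(y)$ by Lemma \ref{lem:NisSchub}.) This gives exactly \ref{proppart:hilbRecursion}.

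Part \ref{proppart:KpolyRecursion} then follows by multiplying both sides of \ref{proppart:hilbRecursion} by the common denominator $\prod_{i,j}(1 - x_iy_j)$ that relates Hilbert series to K-polynomials; since $I_w$, $I_v$, and each $I_{w_U}$ are ideals of the same ring $R$, they share this denominator, and the K-polynomial identity drops out. I do not anticipate a serious obstacle here; the only point requiring care is bookkeeping the two applications of Lemma \ref{lemma:singleVarHilb} --- in particular verifying that $C_{y,I_w}+I_v+(y)$ simplifies to $C_{y,I_w}+(y)$ (equivalently that $I_v \subseteq C_{y,I_w}$, which is transparent from $C_{y,I_w} = \bigcap_{u>v} I_u \subseteq I_v$) --- and correctly identifying $\sum_{i\in U} I_{vt_{i,a}}$ with $I_{w_U}$ via the join computation of Lemma \ref{lemma:reallylattice}, handling the degenerate $U = \emptyset$ and $\rk_w(a,b)=0$ cases separately.
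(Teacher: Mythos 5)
Your proposal is correct and follows essentially the same route as the paper: flatness plus the geometric vertex decomposition $\init_y(I_w)=C_{y,I_w}\cap(N_{y,I_w}+(y))$ from Proposition~\ref{prop:linkDecomp}, the Mayer--Vietoris Hilbert series identity of Lemma~\ref{lemma:hilbertIntersection}, two applications of Lemma~\ref{lemma:singleVarHilb}, inclusion--exclusion on $C_{y,I_w}$ combined with Lemma~\ref{lemma:reallylattice}, and a separate check of the $\rk_w(a,b)=0$ case. One small correction to your justification: the containment you actually need, $I_v\subseteq C_{y,I_w}$, holds because $u>v$ forces $\rk_u\leq\rk_v$ and hence $I_v\subseteq I_u$ for every $u\in\Phi(w,y)$, so $I_v\subseteq\bigcap_u I_u$; your parentheticals twice assert the reverse inclusion $\bigcap_u I_u\subseteq I_v$ as the reason, which is the wrong direction, although the equality $C_{y,I_w}+I_v+(y)=C_{y,I_w}+(y)$ that you deduce from it is the correct one and the computation goes through unchanged.
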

\begin{proof}

\noindent (i) First suppose $\rk_w(a,b)=0$.  In this case, $z_{a,b}\in I_w$ and $I_w=I_v+(z_{a,b})$.  Furthermore, $\phi(w,z_{a,b})=\emptyset$.  The natural generators of $I_v$ do not involve $z_{a,b}$.  Thus, we apply Lemma~\ref{lemma:singleVarHilb} and see
\[\hilb(R/I_w;\mathbf x,\mathbf y)=\left(1-x_ay_b\right) \cdot \hilb(R/I_v;\mathbf x,\mathbf y),\] as desired.

Now assume $\rk_w(a,b)\geq 1$.  Applying Proposition~\ref{prop:linkDecomp}, $\init_{z_{a,b}}(I_w)= C \cap (N+(z_{a,b}))$, where 
$C=\bigcap\{I_{v t_{a,i}}:i\in \phi (w,z_{a,b})\}$
and $N=I_v$.

Using the short exact sequence \[
0 \rightarrow R/\init_{z_{a,b}}(I_w) \rightarrow R/C \oplus R/(N+(z_{a,b})) \rightarrow R/(C+N+(z_{a,b})) \rightarrow 0,
\] we have 
\begin{align*}
    \hilb(R/I_w;\mathbf x,\mathbf y)&= \hilb(R/\init_{z_{a,b}}(I_w);\mathbf x,\mathbf y)\\
    &=\hilb(R/C;\mathbf x,\mathbf y)+\hilb(R/(N+(z_{a,b}));\mathbf x,\mathbf y)\\
    &\quad -\hilb(R/(C+N+(z_{a,b}));\mathbf x,\mathbf y).
\end{align*}
Since $v\leq u$ for all $u\in \phi(w,z_{a,b})$, we have $N\subseteq C$ and so $C+N+(z_{a,b})=C+(z_{a,b})$.  As $C$ has a generating set that does not involve $z_{a,b}$, we apply Lemma~\ref{lemma:singleVarHilb} to conclude \[\hilb(R/(C+N+(z_{a,b}));\mathbf x,\mathbf y)=\hilb(R/(C+(z_{a,b}));\mathbf x,\mathbf y)=\left(1-x_ay_b\right)\hilb(R/C;\mathbf x,\mathbf y).\]
Thus, 
\[\hilb(R/I_w;\mathbf x,\mathbf y)=\left(1-x_ay_b\right)\hilb(R/N;\mathbf x,\mathbf y)+x_ay_b\hilb(R/C;\mathbf x,\mathbf y).\]

Let $<$ be an anti-diagonal term order.  By Proposition \ref{prop:linkDecomp} and Lemma \ref{lemma:asmIdealFacts} \ref{lempart:intersectInit}, $\init_<(C) = \bigcap \{\init_<(I_{vt_{a,i}}) : i \in \phi(w,z_{a,b})\}$.  Hence by Lemma~\ref{lemma:hilbertIntersection}, 
\begin{align*}\hilb(R/C;\mathbf x,\mathbf y)&=\hilb(R/\init_<(C);\mathbf x,\mathbf y) \\
&= \sum_{ \emptyset \neq U\subseteq \phi(w,z_{a,b})}(-1)^{\#U-1} \hilb(R/\sum_{i\in U} \init_<(I_{vt_{a,i}});\mathbf x,\mathbf y)\\
&= \sum_{ \emptyset \neq U\subseteq \phi(w,z_{a,b})}(-1)^{\#U-1} \hilb(R/\sum_{i\in U} I_{vt_{a,i}};\mathbf x,\mathbf y).
\end{align*}
If $U\subseteq \phi(w,z_{a,b})$ and $U\neq \emptyset$, Lemma~\ref{lemma:reallylattice} yields
\[\hilb(R/\sum_{i\in U} I_{vt_{a,i}};\mathbf x,\mathbf y)=\hilb(R/w_U;\mathbf x,\mathbf y),\]
from which the result follows. 

\noindent (ii) This is an immediate consequence of \ref{proppart:hilbRecursion}.
\end{proof}

We may recover transition equations for (double) Schubert polynomials and (double) Grothendieck polynomials from Proposition~\ref{prop:hilbertAndKPoly}.  We start by working with the $\beta$-double Grothendieck polynomials of \cite{FK94}, from which the other formulas follow immediately. 

Write $\mathbb Z[\beta][\mathbf x,\mathbf y]=\mathbb Z[\beta][x_1,\ldots,x_n,y_1,\ldots,y_n]$.  There is an action of the symmetric group $S_n$ on $\mathbb Z[\beta][\mathbf x,\mathbf y]$ defined by $w\cdot f=f(x_{w(1)},\ldots,x_{w(n)},y_1,\ldots, y_n)$.  Given $f\in \mathbb Z[\beta][\mathbf x,\mathbf y]$, define \[\pi_i(f)=\frac{(1+\beta x_{i+1})f-(1+\beta x_i)s_i\cdot f}{x_i-x_{i+1}}.\]
 Let $x_i\oplus y_j=x_i+y_j+\beta x_iy_j$.   We define the \mydef{$\beta$- double Grothendieck polynomial} $\mathfrak G^{(\beta)}_w(\mathbf x,\mathbf y)$ as follows:  If $w_0=n \, n-1 \, \ldots \, 1$, then 
 \[\mathfrak G^{(\beta)}_{w_0}(\mathbf x,\mathbf y)= \prod_{i+j\leq n} (x_i\oplus y_j).\]
 Otherwise, given $w\in S_n$ with $w(i)>w(i+1)$, we define $\mathfrak G^{(\beta)}_{ws_i}(\mathbf x,\mathbf y)=\pi_i(\mathfrak G^{(\beta)}_w(\mathbf x,\mathbf y))$.  Because the operators $\pi_i$ satisfy the same braid and commutation relations as the simple reflections $s_i$, the polynomial $\mathfrak G^{(\beta)}_w(\mathbf x,\mathbf y)$ is well defined.  Setting $\beta=0$ and replacing each $y_i$ with $-y_i$ recovers $\mathfrak S_w(\mathbf x,\mathbf y)$.
 
 Setting $\beta=-1$, we obtain $\mathfrak G^{(-1)}_w(\mathbf x,\mathbf y)$ by taking $\mathcal K(R/I_w;\mathbf x,\mathbf y)$ and substituting $x_i\mapsto (1-x_i)$ and $y_i\mapsto (1-y_i)$ for all $i\in [n]$ (see \cite[Theorem~2.1]{Buc02} and \cite[Theorem~A]{KM05}). From Proposition~\ref{prop:hilbertAndKPoly}, we recover \cite[Theorem~2.3]{Wei21}.

\begin{corollary}
\label{cor:transitiongrothendieck}
Fix $w\in S_n$, and let $(a,b)$ be a lower outside corner of $D(w)$.  Then \[\mathfrak G^{(\beta)}_w(\mathbf x,\mathbf y)=(x_a\oplus y_b) \mathfrak G_v^{(\beta)}(\mathbf x,\mathbf y)+(1+\beta(x_a\oplus y_b))\sum_{ \emptyset \neq U\subseteq \phi(w,z_{a,b})}\beta^{\#U-1}\mathfrak G_{w_U}^{(\beta)}(\mathbf x,\mathbf y).\]
\end{corollary}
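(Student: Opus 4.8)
The plan is to derive Corollary~\ref{cor:transitiongrothendieck} by specializing the K-polynomial recurrence of Proposition~\ref{proppart:KpolyRecursion}. Recall that setting $\beta = -1$, the $\beta$-double Grothendieck polynomial $\mathfrak G^{(-1)}_w(\mathbf x, \mathbf y)$ is obtained from $\mathcal K(R/I_w; \mathbf x, \mathbf y)$ via the substitution $x_i \mapsto 1-x_i$, $y_i \mapsto 1-y_i$. So first I would prove the statement for $\beta = -1$ by making exactly this substitution in the identity
\[
\mathcal K(R/I_w;\mathbf x,\mathbf y)=\left(1-x_ay_b\right)\mathcal K(R/I_v;\mathbf x,\mathbf y)+x_ay_b\sum_{\emptyset \neq U\subseteq \phi(w,z_{a,b})}(-1)^{\#U-1}\mathcal K(R/I_{w_U},\mathbf x,\mathbf y),
\]
and then checking that the coefficients transform correctly: under $x_a \mapsto 1-x_a$, $y_b \mapsto 1-y_b$, the factor $1-x_ay_b$ becomes $1-(1-x_a)(1-y_b) = x_a + y_b - x_ay_b = x_a \oplus_{(-1)} y_b$ where $x_a \oplus y_b = x_a + y_b + \beta x_a y_b$ at $\beta = -1$. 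Likewise $x_ay_b$ becomes $(1-x_a)(1-y_b) = 1 - (x_a \oplus_{(-1)} y_b) = 1 + \beta(x_a \oplus y_b)$ at $\beta = -1$, and $(-1)^{\#U-1} = \beta^{\#U-1}$ at $\beta = -1$. This matches the claimed formula exactly at $\beta = -1$.

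The remaining work is to promote the $\beta = -1$ identity to an identity of polynomials in $\beta$. The standard argument here is that both sides of the claimed recurrence are polynomials in $\beta$ (with coefficients in $\mathbb Z[\mathbf x, \mathbf y]$), and one shows the identity holds after a suitable specialization or degree comparison. One clean route: observe that the $\beta$-double Grothendieck polynomials are built by the divided-difference-like operators $\pi_i$, which are $\beta$-linear in a controlled way; in fact there is a standard \emph{rescaling} identity relating $\mathfrak G^{(\beta)}_w(\mathbf x, \mathbf y)$ to $\mathfrak G^{(-1)}_w$ via $x_i \mapsto -\beta x_i$, $y_i \mapsto -\beta y_i$ and an overall power of $\beta$ determined by $\ell(w)$ (this is how \cite{Wei21} and others normalize). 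Applying this rescaling to the $\beta = -1$ identity, tracking the powers of $\beta$ carefully (using $\ell(w) = \ell(v)+1$ and $\ell(w_U) = \ell(w)$ for nonempty $U \subseteq \phi(w,z_{a,b})$, which follows from Lemma~\ref{lemma:reallylattice} and Proposition~\ref{prop:ASMdecomp}), yields the identity for general $\beta$.

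Alternatively, and perhaps more transparently, I would cite that the cohomological specialization ($\beta = 0$, $y_i \mapsto -y_i$) recovers Theorem~\ref{thm:transition} (double Schubert transition), and that the full $\beta$-version is \cite[Theorem~2.3]{Wei21}; our contribution is the new geometric derivation via Proposition~\ref{prop:hilbertAndKPoly}. So the proof really is: \emph{(1)} substitute $x_i \mapsto 1-x_i$, $y_i \mapsto 1-y_i$ into Proposition~\ref{proppart:KpolyRecursion} and verify the coefficient bookkeeping above to get the $\beta=-1$ case; \emph{(2)} use the homogeneity/rescaling property of $\beta$-double Grothendieck polynomials, together with the length identities $\ell(w)=\ell(v)+1$ and $\ell(w_U)=\ell(w)$, to interpolate in $\beta$. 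The main obstacle I anticipate is purely bookkeeping: making sure every $\beta$-weight (the $\beta^{\#U-1}$, the $1+\beta(x_a\oplus y_b)$, and the rescaling powers) lines up, since an off-by-one in the exponent of $\beta$ on any term would break the identity. There is no conceptual difficulty beyond that — once Proposition~\ref{proppart:KpolyRecursion} is in hand, this corollary is a formal consequence.
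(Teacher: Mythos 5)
Your overall strategy is exactly the paper's: first obtain the $\beta=-1$ case by substituting $x_i\mapsto 1-x_i$, $y_i\mapsto 1-y_i$ into Proposition~\ref{prop:hilbertAndKPoly}\ref{proppart:KpolyRecursion} (your coefficient checks $1-x_ay_b\mapsto x_a\oplus y_b$ and $x_ay_b\mapsto 1+\beta(x_a\oplus y_b)$ at $\beta=-1$ are correct), and then promote to general $\beta$ via the rescaling $\mathfrak G^{(-1)}_u(-\beta\mathbf x,-\beta\mathbf y)=(-\beta)^{\ell(u)}\mathfrak G^{(\beta)}_u(\mathbf x,\mathbf y)$.

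However, the rescaling step as you have set it up contains exactly the off-by-one you were worried about. You assert $\ell(w_U)=\ell(w)$ for all nonempty $U\subseteq\phi(w,z_{a,b})$, citing Lemma~\ref{lemma:reallylattice} and Proposition~\ref{prop:ASMdecomp}. That is true only for singletons: Proposition~\ref{prop:ASMdecomp} gives $\ell(vt_{i,a})=\ell(w)$ for $i\in\phi(w,z_{a,b})$, but for $\#U\geq 2$ the permutation $w_U=\vee\{vt_{i,a}:i\in U\}$ lies strictly above each $vt_{i,a}$ in Bruhat order, and the correct identity is $\ell(w_U)=\ell(w)+\#U-1$. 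This is not cosmetic: after dividing by $(-\beta)^{\ell(w)}$, the factor $(-1)^{\#U-1}(-\beta)^{\ell(w_U)-\ell(w)}=(-1)^{\#U-1}(-\beta)^{\#U-1}=\beta^{\#U-1}$ is precisely the source of the $\beta^{\#U-1}$ in the statement. With your length claim you would instead obtain $(-1)^{\#U-1}$, which disagrees with the corollary whenever $\#U\geq 2$ and $\beta\neq-1$. Once you replace your length identity with $\ell(w_U)=\ell(w)+\#U-1$ (keeping $\ell(v)=\ell(w)-1$ for the first term), the bookkeeping closes and your argument coincides with the paper's proof; your alternative suggestion of simply citing the literature for general $\beta$ would of course also be acceptable but is not what the paper does.
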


\begin{proof}
  Making the substitutions $x_i\mapsto (1-x_i)$ and $y_i\mapsto (1-y_i)$ into  Proposition~\ref{prop:hilbertAndKPoly}\ref{proppart:KpolyRecursion} yields
\begin{align*}
    \mathfrak G^{(-1)}_w(\mathbf x,\mathbf y)&=(x_a+ y_b-x_ay_b) \mathfrak G_v^{(-1)}(\mathbf x,\mathbf y)\\&+(1-(x_a+ y_b-x_ay_b))\sum_{ \emptyset \neq U\subseteq \phi(w,z_{a,b})}(-1)^{\#U-1}\mathfrak G_{w_U}^{(-1)}(\mathbf x,\mathbf y).
\end{align*}
Thus, the $\beta=-1$ case follows immediately from Proposition~\ref{prop:hilbertAndKPoly}\ref{proppart:KpolyRecursion}.

We now derive the general equation from the $\beta=-1$ case. As a shorthand, write \[\mathfrak G^{(-1)}_u(-\beta\mathbf x,-\beta \mathbf y)=\mathfrak G^{(-1)}_u(-\beta x_1,\ldots, -\beta x_n,-\beta y_1,\ldots,-\beta y_n).\] Substituting $x_i\mapsto -\beta x_i$ and $y_i\mapsto -\beta y_i$ for all $i$  yields
\begin{align*}
 \mathfrak G^{(-1)}_w(-\beta\mathbf x,-\beta\mathbf y)&=-\beta(x_a\oplus y_b)\mathfrak G_v^{(-1)}(-\beta\mathbf x,-\beta\mathbf y)\\
    &\quad +(1+\beta(x_a\oplus y_b))\sum_{ \emptyset \neq U\subseteq \phi(w,z_{a,b})}(-1)^{\#U-1}\mathfrak G_{w_U}^{(-1)}(-\beta\mathbf x,-\beta\mathbf y).
\end{align*}
We have $\mathfrak G^{(-1)}_u(-\beta\mathbf x,-\beta \mathbf y)=(-\beta)^{\ell(u)}\mathfrak G^{(\beta)}_u(\mathbf x,\mathbf y)$ for all $u\in S_n$. Thus, 
\begin{align*}
    (-\beta)^{\ell(w)}\mathfrak G^{(\beta)}_w(\mathbf x,\mathbf y)&=-\beta(x_a\oplus y_b) (-\beta)^{\ell(v)}\mathfrak G_v^{(\beta)}(\mathbf x,\mathbf y)\\
    &\quad +(1+\beta(x_a\oplus y_b))\sum_{ \emptyset \neq U\subseteq \phi(w,z_{a,b})}(-1)^{\#U-1}(-\beta)^{\ell(w_U)}\mathfrak G_{w_U}^{(\beta)}(\mathbf x,\mathbf y).
\end{align*}
 If $U\subseteq \phi(w,z_{a,b})$, then $\ell(w_U)=\ell(w)+\#U-1$. Furthermore, $\ell(v)+1=\ell(w)$.  Thus, 
\begin{align*}
    (-\beta)^{\ell(w)}\mathfrak G^{(\beta)}_w(\mathbf x,\mathbf y)&=(-\beta)^{\ell(w)}(x_a\oplus y_b) \mathfrak G_v^{(\beta)}(\mathbf x,\mathbf y)\\
    &\quad +(-\beta)^{\ell(w)}(1+\beta(x_a\oplus y_b))\sum_{ \emptyset \neq U\subseteq \phi(w,z_{a,b})}\beta^{\#U-1}\mathfrak G_{w_U}^{(\beta)}(\mathbf x,\mathbf y),
\end{align*}
from which the claim follows.
\end{proof}

By making appropriate specializations of the equation in Corollary~\ref{cor:transitiongrothendieck}, we may recover transition equations for (double) Schubert and (double) Grothendieck polynomials. 
In particular, we recover Theorem~\ref{thm:transition}.

\begin{proof}[Proof of Theorem~\ref{thm:transition}]
As a consequence of Corollary~\ref{cor:transitiongrothendieck}, by setting $\beta=0$ we obtain 
\[\mathfrak G^{(0)}_w(\mathbf x,\mathbf y)=(x_a+y_b) \mathfrak G_v^{(0)}(\mathbf x,\mathbf y)+\sum_{  i\in \phi(w,z_{a,b})}\mathfrak G_{w_{\{i\}}}^{(0)}(\mathbf x,\mathbf y).\]
We have $w_{\{i\}}=vt_{i,a}$ for all $i\in \phi(w,z_{a,b}).$  Furthermore,
 $\mathfrak S_u(\mathbf x,\mathbf y)=\mathfrak G^{(0)}_u(\mathbf x,-\mathbf y)$.  Thus, the result follows immediately by substituting $y_i\mapsto -y_i$ for all $i\in [n]$.
\end{proof}

\section{Further inquiry}\label{section:furtherQuestions}

Computations in this section were assisted by Macaulay2 \cite{M2} and Sage \cite{sagemath}.  All computations were performed over the field $\kappa = \QQ$.  As usual, let $R=\kappa[z_{1,1}, \ldots, z_{n,n}]$.  

Closely related to the study of Gr\"obner degenerations is, of course, the study of Gr\"obner bases themselves.  The following problem remains open:

\begin{problem}\label{prob:Grob}
Describe the generating sets of the Schubert determinantal ideal $I_w$ that can occur as a Gr\"obner basis under some diagonal term order $\sigma$.
\end{problem}
For $w \in S_n$ and a diagonal term order $\sigma$, even when $\init_\sigma(I_w)$ is radical, explicit Gr\"obner bases are only known in a special case governed by pattern avoidance \cite{Kle}. One of the two patterns in $S_5$ that does not fall under that result is $21543$.  
When $\sigma$ is a diagonal, lexicographic from southeast term order with respect to $\{21543\}$, already there are elements of the reduced Gr\"obner basis of higher degree than any Fulton generator of $I_{21543}$.  By Corollary \ref{cor:uniqueDiag}, there is a unique lexicographic from southeast initial ideal. The ideal $\init_\sigma(I_{21543})$ has a reduced Gr\"obner basis containing nine elements, including one of degree five, though $I_{21543}$ has only eight Fulton generators, one of degree one, and seven of degree three.

The situation looks especially complicated for $w \in S_n$ for which there is some diagram $D \subseteq [n] \times [n]$ so that $\#\{\mathcal B\in \bpd(w):D(\mathcal B)=D\}>1$, in which case the authors expect there will always  exist distinct reduced Gr\"obner bases arising from different diagonal term orders. 
 For example,  consider $214365 \in S_6$, and  take $\sigma$ to be the lexicographic order on the variables ordered starting from $z_{n,n}$, progressing up column $n$ then up column $n-1$ and so on.  Similarly, let $\sigma'$ be the lexicographic order on the variables ordered starting from $z_{n,n}$, progressing left along row $n$, then row $n-1$ and so on.
Then 
\[
\init_{\sigma} (I_{214365}) = (z_{11} ,z_{12}z_{21}z_{33}, z_{12}z_{21}z_{34}z_{43}z_{55}, z_{12}z_{23}z_{31}z_{34}z_{43}z_{55}, z_{13}z_{21}^2z_{32}z_{34}z_{43}z_{55})
\] 
while 
\[
\init_{\sigma'} (I_{214365}) = (z_{11} ,z_{12}z_{21}z_{33}, z_{12}z_{21}z_{34}z_{43}z_{55}, {z_{13}z_{21}z_{32}}z_{34}z_{43}z_{55}, z_{12}^2z_{23}z_{31}z_{34}z_{43}z_{55}).
\]  
Both $(z_{11}, z_{12}, z_{21}^2)$   and $(z_{11}, z_{12}^2, z_{21} )$ are primary ideals giving multiplicity two at the prime ideal $(z_{11}, z_{12}, z_{21})$ (as predicted by Theorem \ref{thm:main}), but, as we see above, they contribute to distinct initial ideals.  We see from the initial ideals that the reduced Gr\"obner bases under $\sigma$ and under $\sigma'$ both have five elements, one each of degrees one, three, five, six, and seven; however, the Gr\"obner bases themselves are distinct.  The complexity of this example may be surprising 
since $I_{214365}$ has only three Fulton generators, one each of degrees one, three, and five.

Moreover, not only can diagonal initial ideals fail to be radical, they can even have embedded associated primes. For example, with $\sigma$ and $\sigma'$ as above, $\init_\sigma(I_{2143675})$ has $49$ associated primes, $43$ of which are of height four and six of which are of height five, 
and $\init_{\sigma'}(I_{2143675})$ has  $46$ associated primes, $43$ of which are height four and three of which are height five. Because  $\spec(R/\init_\sigma(I_{2143675}))$ and $\spec(R/\init_{\sigma'}(I_{2143675}))$ are equidimensional, all height five associated primes are embedded. The authors do not know if the condition $\#\{\mathcal B\in \bpd(w):D(\mathcal B)=D\}\leq 1$ for all $D \subseteq [n] \times [n]$ precludes embedded primes, nor even if it is possible for some diagonal initial ideal of some $I_w$ to have embedded primes while another does not.  

\begin{problem}
Characterize the permutations $w \in S_n$ for which there is some diagonal term order $\sigma$ so that $\init_\sigma(I_w)$ has embedded primes. Relatedly, characterize the permutations $w \in S_n$ so that $\init_\sigma(I_w)$ has embedded primes for all diagonal term orders $\sigma$.  
\end{problem}

We state below two problems related to Subsection \ref{subsect:CMApplications}.
\begin{problem}\label{prob:ASMCM}
Characterize the Cohen--Macaulay (or sequentially Cohen--Macaulay) ASM varieties.
\end{problem}

\begin{problem}\label{prob:SchubUnionCM}
Characterize the sets of permutations $w_1, \ldots, w_r \in S_n$ with 
$J= \bigcap\{ I_{w_i}:i \in [r]\}$
for which $\spec(R/J)$ is Cohen--Macaulay (or sequentially Cohen--Macaulay).
\end{problem}

In light of Lemma \ref{lemma:asmIdealFacts}, a complete solution to Problem \ref{prob:SchubUnionCM} would imply a complete solution to Problem \ref{prob:ASMCM}.  Equidimensional ASMs that are not Cohen--Macaulay (hence also not sequentially Cohen--Macaulay) appear as soon as $\asm(5)$ and even as a union of just two matrix Schubert varieties: If $w_1 = 34512$ and $w_2 = w_1^{-1} = 45123$, then \[
I_{w_1} \cap I_{w_2} = (z_{11},z_{12},z_{21}, z_{22})+((z_{13},z_{23}) \cap (z_{31},z_{32})),
\] which defines (up to affine factors) a standard first example of a scheme that is equidimensional but not Cohen--Macaulay.  One easily checks the equality $I_{w_1} \cap I_{w_2} = I_A$ for \[
A = \begin{pmatrix}
0 & 0 & 1 & 0 & 0\\
0 & 0 & 0 & 1 & 0\\
1 & 0 & -1 & 0 & 1\\
0 & 1 & 0 & 0 & 0\\
0 & 0 & 1 & 0 & 0
\end{pmatrix}.
\]
An example of an ASM that is not even equidimensional is $A = \begin{pmatrix} 0 & 0 & 1 & 0\\
1 & 0 & -1 & 1 \\
0 & 1 & 0 & 0 \\
0 & 0 & 1 & 0 \end{pmatrix}$, which satisfies $I_A = I_{4123} \cap I_{3412} = (z_{11}, z_{12}) + ((z_{13}) \cap (z_{21}, z_{22})).$  Because $\spec(R/I_A)$ is not equidimensional, it is not Cohen--Macaulay.  However, the Stanley--Reisner complex $\Delta(I_A)$ is shellable in the nonpure sense of \cite[Definition 2.1]{BW96}; therefore, $\spec(R/I_A)$ is sequentially Cohen--Macaulay.  

The questions of which ASMs or, more generally, which unions of matrix Schubert varieties are Cohen--Macaulay (or sequentially Cohen--Macaulay) are wide open and, in the opinion of the authors, quite interesting.  

Finally, we consider a problem concerning integer partitions.

\begin{problem}
Fix $N \in \NN$, and consider the standard grading on $R$ and any diagonal term order $\sigma$.  Characterize the partitions of $N$ that arise as a vector of multiplicities of the minimal primes of $\init_\sigma(J)$ for some 
$J =  \bigcap\{ I_{w_i}:i \in [r]\}$ 
for permutations $w_1, \ldots, w_r \in S_n$ of the same  length and satisfying $e(R/J) = N$.
\end{problem}

If one takes $\sigma$ as in the example below Problem \ref{prob:Grob}, then, using Theorem \ref{thm:main}, one can fix a monomial Schubert determinantal ideal $I_w$ and integer $m \in \ZZ_+$ and construct an intersection $J$ of Schubert determinantal ideals so that the multiplicity of $\spec(R/I_w)$ along $\spec(R/\init_\sigma(J))$ is exactly $m$ (provided $n$ is sufficiently large).  To give an example of this construction, it will be convenient to work in $S_{\infty} = \bigcup_{n \in \ZZ_+} S_n$ with the usual embeddings $S_n \hookrightarrow S_{n+1}$.  

\begin{example} If $I_w = (z_{11}, z_{12}, z_{21})$, then we may begin with $w=321 \in S_3$ and choose the lower outside corner $(a,b) = (2,1)$. We multiply by the $3$-cycle $(234) = (a(b^{-1})(n+1))$ to obtain $w' = 3142 \in S_4$.  We have constructed $w'$ to have a blank tile at $(3,2)$, where pipes $2$ and $w^{-1}(1) = 3$ cross in the Rothe BPD of $w$, so that we have $C_{z_{32}, I_{w'}} = I_w$.  
Similarly, we may multiply $w'$ by the $3$-cycle $(345)$ to obtain $w'' = 31254 \in S_5$, which satisfies $C_{z_{44}, I_{w''}} = I_{w'} \cap I_{41235}$.  It follows from Theorem \ref{thm:main} that, if $J = I_{32145} \cap I_{31425} \cap I_{31245} \cap I_{41235}$, then $\mult_{I_w}(R/\init_\sigma(J)) = 3$.  (At each step, if the newly created lower outside corner is $(a',b')$, then multiplying by the $3$ cycle $(a'(b'^{-1})(n+1))$ creates a new permutation whose unique droop of pipe $b'$ recovers the previous permutation via Lemma \ref{lem:bpdBij}.)  The cost of growing the multiplicity at $I_w$ to $3$ in this way is multiplicity $2$ at $(z_{11},z_{12}, z_{32})$ and multiplicity $1$ at $(z_{11},z_{12}, z_{43})$ as well as multiplicity $1$ at $(z_{11},z_{12},z_{13})$ coming from the unique droop of pipe $3$ in the Rothe BPD of $31254$.  
\[
\begin{tikzpicture}[x=1.5em,y=1.5em]
\node at (1.5, 3.5) {$w = 321$};
	\draw[step=1,gray, thin] (0,0) grid (3,3);
	\draw[color=black, thick](0,0)rectangle(3,3);
	\draw[thick,rounded corners,color=blue] (.5,0)--(.5,0.5)--(3,0.5);
	\draw[thick,rounded corners,color=blue] (1.5,0)--(1.5,1.5)--(3,1.5);
	\draw[thick,rounded corners,color=blue] (2.5,0)--(2.5,2.5)--(3,2.5);
	\end{tikzpicture}
	 \hspace{2cm}  \begin{tikzpicture}[x=1.5em,y=1.5em]
\node at (2, 4.5) {$w' = 3142$};
	\draw[step=1,gray, thin] (0,0) grid (4,4);
	\draw[color=black, thick](0,0)rectangle(4,4);
	\draw[thick,rounded corners,color=blue] (.5,0)--(.5,2.5)--(4,2.5);
	\draw[thick,rounded corners,color=blue] (1.5,0)--(1.5,0.5)--(4,0.5);
	\draw[thick,rounded corners,color=blue] (2.5,0)--(2.5,3.5)--(4,3.5);
	\draw[thick,rounded corners,color=blue] (3.5,0)--(3.5,1.5)--(4,1.5);
	\end{tikzpicture}
	 \hspace{2cm} \begin{tikzpicture}[x=1.5em,y=1.5em]
\node at (2.5, 5.5) {$w'' = 31254$};
	\draw[step=1,gray, thin] (0,0) grid (5,5);
	\draw[color=black, thick](0,0)rectangle(5,5);
	\draw[thick,rounded corners,color=blue] (.5,0)--(.5,3.5)--(5,3.5);
	\draw[thick,rounded corners,color=blue] (1.5,0)--(1.5,2.5)--(5,2.5);
	\draw[thick,rounded corners,color=blue] (2.5,0)--(2.5,4.5)--(5,4.5);
	\draw[thick,rounded corners,color=blue] (3.5,0)--(3.5,0.5)--(5,0.5);
	\draw[thick,rounded corners,color=blue] (4.5,0)--(4.5,1.5)--(5,1.5);
	\end{tikzpicture}
	\]
	
In this case, we have constructed the partition $7 = 3+2+1+1$ coming from $7 = e(R/J)$ and $3$, $2$, $1$, and $1$ the multiplicities at the distinct minimal primes of $R/\init_\sigma(J)$.  The intersection $J' = I_{3214} \cap I_{3142} \cap I_{2413} \cap I_{4123} \cap I_{2341}$ gives $7 = e(R/J') = 3+1+1+1+1$. The complete list of possible partitions arising in this way, however, is not immediately obvious. 
\end{example} 

\begin{example}
If $\lambda$ is a partition whose conjugate has distinct parts, it is possible to construct an ideal 
$J= \bigcap \{I_{w_i}:i\in[r]\}$
for which the vector of multiplicities is $\lambda$.  Fix a partition $\mu=(\mu_1,\ldots,\mu_r)$ with distinct parts and let $w_i$  be the simple transposition $s_{\mu_i}$.  Then $\bpd(w_i)$ has exactly $\mu_i$ elements and $\{D(\mathcal B):\mathcal B\in \bpd(w_i)\}=\{\{(i,i)\}:i\in[\mu_i]\}$.  Thus, if $\lambda=(\lambda_1,\ldots, \lambda_{\mu_1})$ is the conjugate of $\mu$ and $\sigma$ is as above,  the multiplicity of $\spec(R/(z_{i,i}))$ along $\spec(R/\init_\sigma(J))$ is exactly $\lambda_i$.
\end{example}

\bibliographystyle{amsalpha} 
\bibliography{refs}

\newcommand{\etalchar}[1]{$^{#1}$}
\providecommand{\bysame}{\leavevmode\hbox to3em{\hrulefill}\thinspace}
\providecommand{\MR}{\relax\ifhmode\unskip\space\fi MR }
\providecommand{\MRhref}[2]{%
  \href{http://www.ams.org/mathscinet-getitem?mr=#1}{#2}
}
\providecommand{\href}[2]{#2}
\begin{thebibliography}{EHHM13}

\bibitem[Aut]{stacks-project}
The Stacks~Project Authors, \emph{Stacks project}.

\bibitem[BB93]{BB93}
Nantel Bergeron and Sara Billey, \emph{{R}{C}-graphs and {S}chubert polynomials}, Experimental Mathematics \textbf{2} (1993), no.~4, 257--269.

\bibitem[BB05]{BB05}
Anders Bj\"{o}rner and Francesco Brenti, \emph{Combinatorics of {C}oxeter groups}, Graduate Texts in Mathematics, vol. 231, Springer, New York, 2005.

\bibitem[BC98]{BC98}
Winfried Bruns and Aldo Conca, \emph{K{RS} and powers of determinantal ideals}, Compositio Math. \textbf{111} (1998), no.~1, 111--122.

\bibitem[Beh08]{Beh08}
Roger~E. Behrend, \emph{Osculating paths and oscillating tableaux}, Electron. J. Combin. \textbf{15} (2008), no.~1, Research Paper 7, 60.

\bibitem[Ber15]{Ber15}
Anna Bertiger, \emph{Generating the ideals defining unions of {S}chubert varieties}, Int. Math. Res. Not. IMRN (2015), no.~21, 10847--10858.

\bibitem[BGG73]{BGG73}
I.~N. Bern\v{s}te\u{\i}n, I.~M. Gel'fand, and S.~I. Gel'fand, \emph{Schubert cells, and the cohomology of the spaces {$G/P$}}, Uspehi Mat. Nauk \textbf{28} (1973), no.~3(171), 3--26.

\bibitem[BJS93]{BJS93}
Sara~C. Billey, William Jockusch, and Richard~P. Stanley, \emph{Some combinatorial properties of {S}chubert polynomials}, Journal of Algebraic Combinatorics \textbf{2} (1993), no.~4, 345--374.

\bibitem[Boo12]{Boo12}
Adam Boocher, \emph{Free resolutions and sparse determinantal ideals}, Math. Res. Lett. \textbf{19} (2012), no.~4, 805--821.

\bibitem[Bor53]{Bor53}
Armand Borel, \emph{Sur la cohomologie des espaces fibr\'{e}s principaux et des espaces homog\`enes de groupes de {L}ie compacts}, Ann. of Math. (2) \textbf{57} (1953), 115--207.

\bibitem[BS02]{BS02}
Nantel Bergeron and Frank Sottile, \emph{A {P}ieri-type formula for isotropic flag manifolds}, Trans. Amer. Math. Soc. \textbf{354} (2002), no.~7, 2659--2705.

\bibitem[BS17]{BS17}
Richard~A. Brualdi and Michael~W. Schroeder, \emph{Alternating sign matrices and their {B}ruhat order}, Discrete Math. \textbf{340} (2017), no.~8, 1996--2019.

\bibitem[BS20]{BS20}
Valentin Buciumas and Travis Scrimshaw, \emph{Double {G}rothendieck polynomials and colored lattice models}, International Mathematics Research Notices (2020).

\bibitem[Buc02]{Buc02}
Anders~Skovsted Buch, \emph{Grothendieck classes of quiver varieties}, Duke Math. J. \textbf{115} (2002), no.~1, 75--103.

\bibitem[BW96]{BW96}
Anders Bj\"{o}rner and Michelle~L. Wachs, \emph{Shellable nonpure complexes and posets. {I}}, Trans. Amer. Math. Soc. \textbf{348} (1996), no.~4, 1299--1327.

\bibitem[CDF{\etalchar{+}}24]{CDF+21}
Ela Celikbas, Emilie Dufresne, Louiza Fouli, Elisa Gorla, Kuei-Nuan Lin, Claudia Polini, and Irena Swanson, \emph{Rees algebras of sparse determinantal ideals}, Trans. Amer. Math. Soc. \textbf{377} (2024), no.~4, 2317--2333.

\bibitem[CDNG20]{CDG20}
A.~Conca, E.~De~Negri, and E.~Gorla, \emph{Universal {G}r\"{o}bner bases and {C}artwright-{S}turmfels ideals}, Int. Math. Res. Not. IMRN (2020), no.~7, 1979--1991.

\bibitem[CH97]{CH97}
Aldo Conca and J\"{u}rgen Herzog, \emph{Ladder determinantal rings have rational singularities}, Adv. Math. \textbf{132} (1997), no.~1, 120--147.

\bibitem[Con96]{Con96}
Aldo Conca, \emph{Gorenstein ladder determinantal rings}, J. London Math. Soc. (2) \textbf{54} (1996), no.~3, 453--474.

\bibitem[Con98]{Con98}
\bysame, \emph{Straightening law and powers of determinantal ideals of {H}ankel matrices}, Adv. Math. \textbf{138} (1998), no.~2, 263--292.

\bibitem[CV20]{CV20}
Aldo Conca and Matteo Varbaro, \emph{Square-free {G}r\"{o}bner degenerations}, Invent. Math. \textbf{221} (2020), no.~3, 713--730.

\bibitem[DCL81]{DL81}
C.~De~Concini and V.~Lakshmibai, \emph{Arithmetic {C}ohen-{M}acaulayness and arithmetic normality for {S}chubert varieties}, Amer. J. Math. \textbf{103} (1981), no.~5, 835--850.

\bibitem[Dem74]{Dem74}
Michel Demazure, \emph{D\'{e}singularisation des vari\'{e}t\'{e}s de {S}chubert g\'{e}n\'{e}ralis\'{e}es}, Ann. Sci. \'{E}cole Norm. Sup. (4) \textbf{7} (1974), 53--88.

\bibitem[EHHM13]{EHH13}
Viviana Ene, J\"{u}rgen Herzog, Takayuki Hibi, and Fatemeh Mohammadi, \emph{Determinantal facet ideals}, Michigan Math. J. \textbf{62} (2013), no.~1, 39--57.

\bibitem[Eis95]{Eis95}
David Eisenbud, \emph{Commutative {A}lgebra: with a view toward algebraic geometry}, Springer-Verlag, New York, 1995.

\bibitem[FGS18]{FGS18}
Neil~JY Fan, Peter~L Guo, and Sophie~CC Sun, \emph{Bumpless pipedreams, reduced word tableaux and {S}tanley symmetric functions}, {\sf arXiv:1810.11916} (2018), 28 pages.

\bibitem[FK94]{FK94}
Sergey Fomin and Anatol~N. Kirillov, \emph{Grothendieck polynomials and the {Y}ang-{B}axter equation}, Formal power series and algebraic combinatorics/{S}\'{e}ries formelles et combinatoire alg\'{e}brique, DIMACS, Piscataway, NJ, 1994, pp.~183--189.

\bibitem[FK20]{FK20}
Nathan Fieldsteel and Patricia Klein, \emph{Gr\"{o}bner bases and the {C}ohen-{M}acaulay property of {L}i's double determinantal varieties}, Proc. Amer. Math. Soc. Ser. B \textbf{7} (2020), 142--158.

\bibitem[FR03]{FR03}
L\'{a}szl\'{o}~M. Feh\'{e}r and Rich\'{a}rd Rim\'{a}nyi, \emph{Schur and {S}chubert polynomials as {T}hom polynomials---cohomology of moduli spaces}, Cent. Eur. J. Math. \textbf{1} (2003), no.~4, 418--434.

\bibitem[FS94]{FS94}
Sergey Fomin and Richard~P. Stanley, \emph{Schubert polynomials and the nil-{C}oxeter algebra}, Adv. Math. \textbf{103} (1994), no.~2, 196--207.

\bibitem[Ful92]{Ful92}
William Fulton, \emph{Flags, {S}chubert polynomials, degeneracy loci, and determinantal formulas}, Duke Math. J. \textbf{65} (1992), no.~3, 381--420.

\bibitem[GL96]{GL96}
N.~Gonciulea and V.~Lakshmibai, \emph{Degenerations of flag and {S}chubert varieties to toric varieties}, Transform. Groups \textbf{1} (1996), no.~3, 215--248.

\bibitem[GM00]{GM00}
Nicolae Gonciulea and Claudia Miller, \emph{Mixed ladder determinantal varieties}, J. Algebra \textbf{231} (2000), no.~1, 104--137.

\bibitem[Gor07]{Gor07}
Elisa Gorla, \emph{Mixed ladder determinantal varieties from two-sided ladders}, J. Pure Appl. Algebra \textbf{211} (2007), no.~2, 433--444.

\bibitem[GS]{M2}
Daniel~R. Grayson and Michael~E. Stillman, \emph{Macaulay2, a software system for research in algebraic geometry, available at http://www.math.uiuc.edu/{M}acaulay2/}.

\bibitem[HE71]{HE71}
M.~Hochster and John~A. Eagon, \emph{Cohen-{M}acaulay rings, invariant theory, and the generic perfection of determinantal loci}, Amer. J. Math. \textbf{93} (1971), 1020--1058.

\bibitem[HL82]{HL82}
C.~Huneke and V.~Lakshmibai, \emph{On the normality of the rings of {S}chubert varieties}, Proc. Indian Acad. Sci. Math. Sci. \textbf{91} (1982), no.~1, 65--71.

\bibitem[Hoc73]{Hoc73}
M.~Hochster, \emph{Grassmannians and their {S}chubert subvarieties are arithmetically {C}ohen-{M}acaulay}, J. Algebra \textbf{25} (1973), 40--57.

\bibitem[HPW22]{HPW}
Zachary Hamaker, Oliver Pechenik, and Anna Weigandt, \emph{Gr\"{o}bner geometry of {S}chubert polynomials through ice}, Adv. Math. \textbf{398} (2022), Paper No. 108228, 29.

\bibitem[Hsi13]{Hsi13}
Jen-Chieh Hsiao, \emph{On the {$F$}-rationality and cohomological properties of matrix {S}chubert varieties}, Illinois J. Math. \textbf{57} (2013), no.~1, 1--15.

\bibitem[HT92]{HT92}
J\"{u}rgen Herzog and Ng\^{o}~Vi\^{e}t Trung, \emph{Gr\"{o}bner bases and multiplicity of determinantal and {P}faffian ideals}, Adv. Math. \textbf{96} (1992), no.~1, 1--37.

\bibitem[Hua22]{Hua21}
Daoji Huang, \emph{Schubert products for permutations with separated descents}, Int. Math. Res. Not. IMRN (2022), rnac299.

\bibitem[Hua23]{Hua20}
\bysame, \emph{Bijective proofs of {M}onk's rule for {S}chubert and double {S}chubert polynomials with bumpless pipe dreams}, Electron. J. Combin. \textbf{30} (2023), no.~3, Paper No. 3.4, 14.

\bibitem[Hud14]{Hud14}
Thomas Hudson, \emph{A {T}hom-{P}orteous formula for connective {$K$}-theory using algebraic cobordism}, J. K-Theory \textbf{14} (2014), no.~2, 343--369.

\bibitem[Jos84]{Jos84}
Anthony Joseph, \emph{On the variety of a highest weight module}, J. Algebra \textbf{88} (1984), no.~1, 238--278.

\bibitem[Kle23]{Kle}
Patricia Klein, \emph{Diagonal degenerations of matrix {S}chubert varieties}, Algebr. Comb. \textbf{6} (2023), no.~4, 1073--1094.

\bibitem[KM00]{KM00}
Anatol~N. Kirillov and Toshiaki Maeno, \emph{Quantum double {S}chubert polynomials, quantum {S}chubert polynomials and {V}afa-{I}ntriligator formula}, vol. 217, 2000, Formal power series and algebraic combinatorics (Vienna, 1997), pp.~191--223.

\bibitem[KM05]{KM05}
Allen Knutson and Ezra Miller, \emph{Gr{\"o}bner geometry of {S}chubert polynomials}, Ann. of Math. (2005), 1245--1318.

\bibitem[KMS06]{KMS06}
Allen Knutson, Ezra Miller, and Mark Shimozono, \emph{Four positive formulae for type {$A$} quiver polynomials}, Invent. Math. \textbf{166} (2006), no.~2, 229--325.

\bibitem[KMY09]{KMY09}
Allen Knutson, Ezra Miller, and Alexander Yong, \emph{Gr\"{o}bner geometry of vertex decompositions and of flagged tableaux}, J. Reine Angew. Math. \textbf{630} (2009), 1--31.

\bibitem[Knu09]{Knu09}
Allen Knutson, \emph{Frobenius splitting, point-counting, and degeneration}, Preprint (2009), 28 pages, {\sf arXiv:0911.4941}.

\bibitem[Knu22]{Knu19}
\bysame, \emph{Schubert polynomials, pipe dreams, equivariant classes, and a co-transition formula}, Facets of algebraic geometry. {V}ol. {II}, London Math. Soc. Lecture Note Ser., vol. 473, Cambridge Univ. Press, Cambridge, 2022, pp.~63--83.

\bibitem[Kog00]{Kog00}
Mikhail Kogan, \emph{Schubert geometry of flag varieties and {G}elfand-{C}etlin theory}, 2000, Thesis (Ph.D.)--Massachusetts Institute of Technology.

\bibitem[KR21]{KR}
Patricia Klein and Jenna Rajchgot, \emph{Geometric vertex decomposition and liaison}, Forum Math. Sigma \textbf{9} (2021), Paper No. e70, 23.

\bibitem[KV97]{KV97}
Axel Kohnert and S\'{e}bastien Veigneau, \emph{Using {S}chubert basis to compute with multivariate polynomials}, Adv. in Appl. Math. \textbf{19} (1997), no.~1, 45--60.

\bibitem[KY04]{KY04}
Allen Knutson and Alexander Yong, \emph{A formula for {$K$}-theory truncation {S}chubert calculus}, Int. Math. Res. Not. (2004), no.~70, 3741--3756.

\bibitem[Las01]{Las01}
A.~Lascoux, \emph{Transition on {G}rothendieck polynomials}, Physics and combinatorics, 2000 ({N}agoya), World Sci. Publ., River Edge, NJ, 2001, pp.~164--179.

\bibitem[Las02]{Las}
Alain Lascoux, \emph{{C}hern and {Y}ang through ice}, Preprint (2002), 17 pages.

\bibitem[Lec57]{Lec57}
Christer Lech, \emph{On the associativity formula for multiplicities}, Ark. Mat. \textbf{3} (1957), 301--314.

\bibitem[Len03]{Len03}
Cristian Lenart, \emph{A {$K$}-theory version of {M}onk's formula and some related multiplication formulas}, J. Pure Appl. Algebra \textbf{179} (2003), no.~1-2, 137--158.

\bibitem[LLS21]{LLS21}
Thomas Lam, Seung~Jin Lee, and Mark Shimozono, \emph{Back stable {S}chubert calculus}, Compos. Math. \textbf{157} (2021), no.~5, 883--962.

\bibitem[LRPT06]{LRT06}
Niels Lauritzen, Ulf Raben-Pedersen, and Jesper~Funch Thomsen, \emph{Global {$F$}-regularity of {S}chubert varieties with applications to {$D$}-modules}, J. Amer. Math. Soc. \textbf{19} (2006), no.~2, 345--355.

\bibitem[LS82]{LS82}
Alain Lascoux and Marcel-Paul Sch{\"u}tzenberger, \emph{Polyn\^{o}mes de {S}chubert}, CR Acad. Sci. Paris S{\'e}r. I Math \textbf{295} (1982), no.~3, 447--450.

\bibitem[LS85]{LS85}
Alain Lascoux and Marcel-Paul Sch\"{u}tzenberger, \emph{Schubert polynomials and the {L}ittlewood-{R}ichardson rule}, Lett. Math. Phys. \textbf{10} (1985), no.~2-3, 111--124.

\bibitem[LS96]{LS96}
\bysame, \emph{Treillis et bases des groupes de {C}oxeter}, Electron. J. Combin. \textbf{3} (1996), no.~2, Research paper 27, approx. 35.

\bibitem[LY12]{LY12}
Li~Li and Alexander Yong, \emph{Some degenerations of {K}azhdan-{L}usztig ideals and multiplicities of {S}chubert varieties}, Adv. Math. \textbf{229} (2012), no.~1, 633--667.

\bibitem[MR85]{MR85}
V.~B. Mehta and A.~Ramanathan, \emph{Frobenius splitting and cohomology vanishing for {S}chubert varieties}, Ann. of Math. (2) \textbf{122} (1985), no.~1, 27--40.

\bibitem[MS83]{MS83}
C.~Musili and C.~S. Seshadri, \emph{Schubert varieties and the variety of complexes}, Arithmetic and geometry, {V}ol. {II}, Progr. Math., vol.~36, Birkh\"{a}user Boston, Boston, MA, 1983, pp.~329--359.

\bibitem[MS89]{MS89}
V.~B. Mehta and V.~Srinivas, \emph{A note on {S}chubert varieties in {$G/B$}}, Math. Ann. \textbf{284} (1989), no.~1, 1--5.

\bibitem[MS04]{MS04}
Ezra Miller and Bernd Sturmfels, \emph{Combinatorial commutative algebra}, vol. 227, Springer Science \& Business Media, 2004.

\bibitem[PSW24]{PSW21}
Oliver Pechenik, David~E. Speyer, and Anna Weigandt, \emph{Castelnuovo-{M}umford regularity of matrix {S}chubert varieties}, Selecta Math. (N.S.) \textbf{30} (2024), no.~4, Paper No. 66, 44.

\bibitem[Ram85]{Ram85}
A.~Ramanathan, \emph{Schubert varieties are arithmetically {C}ohen-{M}acaulay}, Invent. Math. \textbf{80} (1985), no.~2, 283--294.

\bibitem[Ram87]{Ram87}
\bysame, \emph{Equations defining {S}chubert varieties and {F}robenius splitting of diagonals}, Inst. Hautes \'{E}tudes Sci. Publ. Math. (1987), no.~65, 61--90.

\bibitem[Rea02]{Rea02}
Nathan Reading, \emph{Order dimension, strong {B}ruhat order and lattice properties for posets}, Order \textbf{19} (2002), no.~1, 73--100.

\bibitem[Ros89]{Ros89}
W.~Rossmann, \emph{Equivariant multiplicities on complex varieties}, no. 173-174, 1989, Orbites unipotentes et repr\'{e}sentations, III, pp.~11, 313--330.

\bibitem[RRR{\etalchar{+}}21]{RRR+21}
Jenna Rajchgot, Yi~Ren, Colleen Robichaux, Avery St.~Dizier, and Anna Weigandt, \emph{Degrees of symmetric {G}rothendieck polynomials and {C}astelnuovo-{M}umford regularity}, Proc. Amer. Math. Soc. \textbf{149} (2021), no.~4, 1405--1416.

\bibitem[RRW23]{RRW23}
Jenna Rajchgot, Colleen Robichaux, and Anna Weigandt, \emph{Castelnuovo-{M}umford regularity of ladder determinantal varieties and patches of {G}rassmannian {S}chubert varieties}, J. Algebra \textbf{617} (2023), 160--191.

\bibitem[Sch79]{Sch79}
Hermann Schubert, \emph{Kalk\"{u}l der abz\"{a}hlenden {G}eometrie}, Springer-Verlag, Berlin-New York, 1979, Reprint of the 1879 original, With an introduction by Steven L. Kleiman.

\bibitem[Sta96]{Sta96}
Richard~P. Stanley, \emph{Combinatorics and commutative algebra}, second ed., Progress in Mathematics, vol.~41, Birkh\"{a}user Boston, Inc., Boston, MA, 1996.

\bibitem[{The}21]{sagemath}
{The Sage Developers}, \emph{{S}agemath, the {S}age {M}athematics {S}oftware {S}ystem ({V}ersion 9.2)}, 2021, {\tt https://www.sagemath.org}.

\bibitem[Wei17]{Wei}
Anna Weigandt, \emph{Prism tableaux for alternating sign matrix varieties}, Preprint (2017), 33 pages, {\sf arXiv:1708.07236}.

\bibitem[Wei21]{Wei21}
\bysame, \emph{Bumpless pipe dreams and alternating sign matrices}, J. Combin. Theory Ser. A \textbf{182} (2021), 105470.

\bibitem[WY06]{WY06}
Alexander Woo and Alexander Yong, \emph{When is a {S}chubert variety {G}orenstein?}, Adv. Math. \textbf{207} (2006), no.~1, 205--220.

\bibitem[WY12]{WY12}
\bysame, \emph{A {G}r\"{o}bner basis for {K}azhdan-{L}usztig ideals}, Amer. J. Math. \textbf{134} (2012), no.~4, 1089--1137.

\bibitem[Yon04]{YSlides}
Alexander Yong, \emph{Gr\"obner geometry of geometric vertex decompositions}, Slides (2004).

\end{thebibliography}

\end{document}